\renewcommand{\ni}{\noindent}
\newcommand{\eq}{\begin{equation}}
\newcommand{\qe}{\end{equation}}
\renewcommand{\d}{\mathrm{d}}
\newcommand{\measure}{\mathds{M}}
\newcommand{\normTV}[1]{\left\lVert #1\right\lVert_{TV}}
\newcommand{\abs}[1]{\left\vert#1\right\vert}
\theoremstyle{plain}
\newtheorem{Assumption}{Assumption}
\newtheorem{thm}{Theorem}
\newtheorem{lem}{Lemma}
\newtheorem{defn}{Definition}
\theoremstyle{definition}
\theoremstyle{remark}
\newtheorem*{rem}{Remark}
\newcommand{\mbf}{\mathbf}
\newcommand{\ind}{\mathds{1}}
\newcommand{\R}{\mathds{R}}                     
\def\P{\mathds{P}}
\def\E{\mathds{E}} 
\def\Var{\mathop{\rm Var}}
\journal{J. of Mathematical Analysis and Applications}
\begin{document}

\begin{frontmatter}
\sloppy

\title{Non-uniform spline recovery from small degree polynomial approximation}

\author{Yohann De Castro}
\ead{yohann.decastro@math.u-psud.fr}
\ead[url]{www.math.u-psud.fr/$\sim$decastro} 
\author{Guillaume Mijoule}
\address{D\'epartement de Math\'ematiques (CNRS UMR 8628), B\^atiment 425, Facult\'e des Sciences d'Orsay, Universit\'e Paris-Sud 11, F-91405 Orsay Cedex, France.}
\ead{guillaume.mijoule@math.u-psud.fr}

\begin{abstract}
We investigate the sparse spikes deconvolution problem onto spaces of algebraic polynomials. Our framework encompasses the measure reconstruction problem from a combination of noiseless and noisy moment measurements. We study a TV-norm regularization procedure to localize the support and estimate the weights of a target discrete measure in this frame. Furthermore, we derive quantitative bounds on the support recovery and the amplitudes errors under a Chebyshev-type minimal separation condition on its support. Incidentally, we study the localization of the knots of non-uniform splines when a Gaussian perturbation of their inner-products with a known polynomial basis is observed (i.e. a small degree polynomial approximation is known) and the boundary conditions are known. We prove that the knots can be recovered in a grid-free manner using semidefinite programming.
\end{abstract}

\begin{keyword}
LASSO \sep Super-resolution \sep Non-uniform splines \sep Algebraic polynomials \sep $\ell_{1}$-minimization.
\end{keyword}

\end{frontmatter}

\section{Introduction}

\subsection{Non-uniform spline recovery}

Our framework involves the recovery of non-uniform splines, i.e. a smooth polynomial function that is piecewise-defined on subintervals of different lengths. More precisely, we investigate a grid-free procedure to estimate a non-uniform spline from a polynomial approximation of small degree. Our estimation procedure can be used as a post-processing technique in various fields such as data assimilation \cite{kalnay2003atmospheric}, shape optimization \cite{henrot2006variation} or spectral methods in PDE's \cite{gottlieb1977numerical}. 

For instance, one gets a polynomial approximation of the solution of a PDE when using spectral methods such as the Galerkin method. In this setting, one seeks a weak solution of a PDE using bounded degree polynomials as test functions. Then, the Lax-Milgram theorem grants the existence of a unique weak solution $\mbf f$ for which a polynomial approximation $P$ can be computed. Moreover, C\'ea's lemma shows that the Galerkin approximation $P$ is comparable to the best polynomial approximation $\mbf p(\mbf f)$ of the weak solution $\mbf f$. This situation can be depicted by Assumption \ref{eq:AssumptionSplines}. Hence, if one knows the weak solution $\mbf f$ is a non-uniform spline then our (post-processing) procedure can provide a grid-free estimate $\mbf f$ from the Galerkin approximation $P$. Moreover, Theorem \ref{thm:Spline} shows that the recovered spline has large discontinuities near the large discontinuities of the target spline $\mbf f$. Hence, the location of the large enough discontinuities of the weak solution $\mbf f$ can be quantitatively and in a grid-free manner estimated from the Galerkin approximation using our algorithm.

As an example, Figure \ref{fig:Example} illustrates how our procedure improves a polynomial approximation of a non-uniform spline. Observe that discontinuities of splines make them difficult to approximate by polynomials. Consider an approximation (thin black line) of the spline (thick dashed gray line). It seems rather difficult to localize the discontinuities of the spline from the knowledge of this polynomial approximation and the boundary conditions. Nevertheless, our procedure produces a non-uniform spline (thick black line) whose large discontinuities are close to the knots of the target spline.

\begin{figure}
\begin{center}
\includegraphics[width=0.7\textwidth]{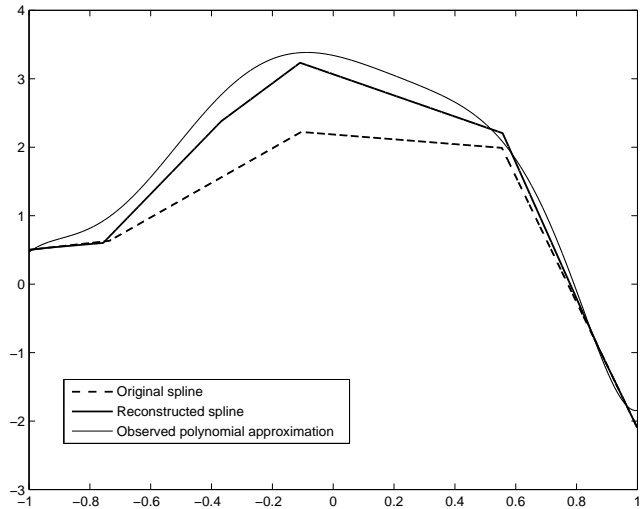}
\end{center}
\caption{Estimated spline (thick black line) of a non-uniform spline (thick dashed gray line) and its knots from a polynomial approximation (thin black line).}
\label{fig:Example}
\end{figure}

The method we propose is as follows. Following an idea of \cite{bendory2013exact}, we aim at reconstructing a spline of degree $d$ by recovering its $d+1$ distributional derivative, using tools of the super-resolution theory  \cite{donoho1992superresolution,candes2012towards,de2012exact}. More precisely, consider an univariate spline $\mbf f$ of degree $d$, defined on $[-1,1]$. The $d+1$ distributional derivative of $\mbf f$, denoted $\mbf f^{(d+1)}$, is a discrete signed measure whose support are the knots of the spline. Using an integration by parts, one can show that the first $m+1$ polynomial moments of $\mbf f^{(d+1)}$ can be expressed as a linear combination of the first $m-d$ moments of $\mbf f$ and its $2(d+1)$ boundary conditions; moreover, the first $d+1$ moments of  $\mbf f^{(d+1)}$ only depend on the boundary conditions (details are in Lemma \ref{lem:Matrix}). As a consequence, observing $m-d$ noisy moments and the (noiseless) boundary conditions of $\mbf f$ is equivalent to observing $d+1$ noiseless and $m-d$ noisy moments of $\mbf f^{(d+1)}$. This observation is the motivation of the theoretical work of this paper.

\subsection{Sparse spikes deconvolution onto spaces of algebraic polynomials}

In this paper, we extend some recent results in spike deconvolution to the frame of algebraic polynomials. Beyond the theoretical interest, we focus on this model in order to bring tools and quantitative guarantees from the super-resolution theory to the companion problem of the recovery of knots of non-uniform splines \cite{bendory2013exact}. At first glance, this setting can be depicted as a deconvolution problem where one wants to recover the location of the support of a discrete measure from the observation of its convolution with an algebraic polynomial of given degree $ m$. More precisely, we aim at recovering a discrete measure from the knowledge of the true $( d+1)$ first moments and a noisy version of the $( m- d)$ next ones.

\subsection{Previous works}

The super-resolution problem has been intensively investigated in the last years. In \cite{bredies2013inverse,de2012exact} the authors give an exact recovery condition for the noiseless problem in a general setting. In the Fourier frame, this analysis was greatly refined in \cite{candes2012towards} which shows that the exact recovery condition is satisfied for all measure satisfying a ``minimum separation condition''. The recovery from noisy samplings was investigated in \cite{candes2012super} which characterizes the reconstruction error as the resolution increases. The first results on quantitative localization was brought by the authors of \cite{ azais2014spike} who give bounds on the support detection error in a general frame. This analysis was derived in terms of the amplitude of the target measure in \cite{fernandez2013support}. In the Fourier frame, the optimal rates in prediction error have been investigated in \cite{tang2013near}.
Lastly, the behavior and the stability of $TV$-norm regularization in the space of measures has been investigated in \cite{duval2013exact} when observing small noise errors.

The spline recovery problem in the noiseless case has been studied in \cite{bendory2013exact} where the authors assume that one knows the orthogonal projection $\mbf p(\mbf f)$ of the non-uniform spline $\mbf f$. Our frame extends their point of view to the noisy case where one observes a polynomial approximation $P$
. To the best of our knowledge, there is no result on a quantitative localization of the knots of non-uniform splines from noisy measurements. 

\section{General model and notation}
Let $[-1,1]$ be equipped with the distance:
\[
\forall\,u\,,v\in[-1,1]\,,\quad\mathit d(u,v)=|\arccos u - \arccos v|\,.
\]
Let $\mathbf x$ be a signed measure on ${[-1,1]}$ with finite support of unknown size $s$. In particular, $\mbf x$ admits a polar decomposition:
\begin{equation}\label{def:TargetMeasure}
\mbf x=\sum_{k=1}^s a_k\,\delta_{ t_k}\,,
\end{equation}
 where $a_k\in\mathds R\setminus\{0\}$, $t_k\in {[-1,1]}$, and
$\delta_t$ denotes the Dirac measure at point $t$. Let $ m$ be a positive integer and $\mathscr F=\{\varphi_0,\varphi_1,\dotsc,\varphi_{ m}\}$ be such that $\varphi_{0}=1$ and for $k=1,\ldots,{ m}$, \[\varphi_{k}=\sqrt 2\,T_{k}\,,\] where $T_{k}(t)=\cos(k\arccos(t))$ is the $k$-th Chebyshev polynomial of the first kind. Observe that the family $\mathscr F$ is an orthonormal family with respect to the probability measure $\measure(\d t)=(1/\pi)\,({1-t^{2}})^{-1/2}\,\mathcal L(\d t)$ on $[-1,1]$, where $\mathcal L$ denotes the Lebesgue measure. Define the $k$-th {generalized moment} of a signed measure $\mu$ on ${[-1,1]}$ as:
\[
c_{k}(\mu)=\displaystyle\int\nolimits_{[-1,1]} \varphi_{k}\,\d\mu\,, 
\]
for $k=0,1,\dotsc,{ m}$. Assume that we observe $c_k(\mbf x)$ for $0\leq k\leq d$ and a noisy version of $c_k(\mbf x)$ for $ d+1\leq k\leq { m}$, where possibly $ d=-1$. Define $y_k=c_k(\mbf x)+\varepsilon_k$ such as $\varepsilon_{k}=0$ for $0\leq k\leq d$ and $\varepsilon_{k}$ are i.i.d. $\mathcal N(0,\sigma^{2})$ for $ d+1\leq k\leq { m}$. This can be written as:
\eq\label{eq:defObservation}
\mbf y=\mbf c(\mbf x)+\mbf e\,,
\qe
where $\mbf c(\mbf x)=(c_{k}(\mbf x))_{k=0}^{ m}$ and $\mbf e=(0, \mbf n)$ with $\mbf n\sim\mathcal N(0,\sigma^{2}\,\mathrm{Id}_{ m- d})$. Note we know the first true moments up to the order $ d$ and a noisy version of them up to the order $ m$. Moreover, the degree $ d$ is allowed to be $-1$.

\subsection{An L1-minimization procedure}
Our analysis follows recent proposals on $\ell_{1}$-minimization \cite{ bredies2013inverse,azais2014spike, tang2013near, duval2013exact}. Denote by $\mathcal M$ the set of all finite signed measures on ${[-1,1]}$ endowed with the  {total variation} norm $\normTV{\,.\,}$, which is isometrically isomorphic to the dual $\mathscr C([-1,1])^\star$ of continuous function endowed with the supremum norm. We recall that for all $\mu\in \mathcal M$, \[\normTV{\mu}=\sup_{\mathscr P}\sum_{E\in\mathscr P}\abs{\mu(E)}\,,\] where the supremum is taken over all partitions $\mathscr P$ of ${[-1,1]}$ into a finite number of disjoint measurable subsets. Consider a modified version of the convex program \textrm{BLASSO} \cite{ azais2014spike} given by:
\eq
\label{def:Blasso}
 \hat{\mbf x}\in\arg\min_{ \mu\in\mbf C_{ d}(\mbf x)}\frac12\lVert \mbf c(\mu)-\mbf y\lVert^2_2+\lambda\lVert\mu\lVert_{TV}\,,
\qe
where $\mbf C_{ d}(\mbf x):=\{\mu\in \mathcal M\,;\quad \forall\,k=0,\ldots, d\,,\ c_{k}(\mu)=c_{k}(\mbf x)\}$ and $\lambda>0$ is a tuning parameter. Questions immediately arise:

\begin{itemize}
 	\item How close is the recovered spike measure from the target $\mbf x$?
  	\item How accurate is the localization of \eqref{def:Blasso} in terms of the noise and the amplitude of the recovered/original spike?
\end{itemize}

\ni
To the best of our knowledge, this paper is the first to quantitatively address these questions in the frame of algebraic polynomials.

\subsection{Contribution}
\begin{defn}[Minimum separation]
Let $\mbf T \subset [-1,1]$. We define $\Delta(\mbf T)$, the minimum separation of $\mbf T$, by
\[
\Delta(\mbf T)=\min_{(t,t')\in \mbf T^2; t\neq t'} \min\ \{d(t,t'), \pi-d(t,t')\}\,,
\]
that is the minimum modulus between two points of $\arccos(\mbf T)+\pi\mathds Z$.
\end{defn}
\ni
Let $\epsilon(\mbf T)$ denote the distance from $\mbf T\backslash  \{ -1,1\}$ to the edges of $[-1,1]$:
\[
\epsilon(\mbf T) = \min \,\{ \min(d(t,1),d(t,-1)); \; t \in \mbf T\backslash  \{ -1,1\} \} \,.
\]
\begin{thm}\label{thm:Main}
Assume $m \geq 128$. Let $\eta>0$ and set:
\[
\lambda_{0}:=2\sigma[2(1+\eta)( m- d)\log(5( m+ d+1))]^{1/2}\,,
\]
then with probability greater than $1-\big[\frac1{5( m+ d)}\big]^{\eta}$ the following holds. If  $\lambda\geq\lambda_{0}$ and
\eq
\label{hyp:separation}
\min\{\Delta (\mbf T),\,2\epsilon(\mbf T)\} \geq \frac{5\pi}{ m},
\qe
then there exists a solution $\hat{\mbf x}$ to \eqref{def:Blasso} with finite support $\displaystyle\hat{\mbf x}=\sum_{k=1}^{\hat s}\hat{a}_{k}\delta_{\hat{t}_{k}}$ satisfying:
\begin{enumerate}[(i)]
\item Global control:
\[
\displaystyle\sum_{k=1}^{\hat s}|\hat{ a}_{k}|\min\Big\{ m^{2}\min_{t\in\mbf T}d(t, {\hat{t}_{k}})^{2};c_{0}^{2}\Big\}\leq{c_{1}}\lambda\,,
\]
\item Local control: 
\[
\forall i=1,\ldots,s,\quad\displaystyle\Big| a_{i}-\sum_{\hat t_k \in\mathrm{Supp}(\hat{\mbf x})\, | \, d( t_{i},\hat t_k) \leq \frac{c_{0}}{ m}} \hat{a}_k \Big|\leq c_{2}\lambda\,,
\]
\item Large spike localization:
\[
\forall i=1,\ldots,s,\ \mathrm{s.t.}\ |a_{i}|>c_{2}\lambda\,,\ \exists\,\hat{ t}\in\mathrm{Supp}(\hat{\mbf x})\ \mathrm{s.t.}\ \displaystyle d({t}_{i},\hat{t})\leq \Big[\frac{c_{1}\lambda}{|a_{i}|-c_{2}\lambda}\Big]^{1/2}\frac1{ m}\,,
\]
\end{enumerate}
where $c_{0}=1.0361$, $c_{1}=235.85$, and $c_{2}=220.72$.
\end{thm}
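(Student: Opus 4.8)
The plan is to adapt the dual-certificate method for $\ell_1$-minimization over measures \cite{candes2012super,azais2014spike} to the Chebyshev system. First I would transport the problem to the circle through $t=\cos\theta$, $\theta\in[0,\pi]$. Since $\varphi_k(\cos\theta)=\sqrt2\cos(k\theta)$, the generalized moments $c_k(\mbf x)$ are, up to normalization, the cosine-Fourier coefficients of the even $2\pi$-periodic measure $\tilde{\mbf x}=\sum_k a_k(\delta_{\theta_k}+\delta_{-\theta_k})$ obtained by reflecting the atoms $\theta_k=\arccos t_k$. In these coordinates $d$ becomes arc length on the circle, and the separation hypothesis \eqref{hyp:separation} is exactly what is needed to keep the $2s$ reflected atoms $\{\pm\theta_k\}$ apart by $5\pi/m$: the term $\Delta(\mbf T)$ controls the gaps among the $\theta_k$, whereas $2\epsilon(\mbf T)$ controls both the gap between an atom $\theta_k$ and its own mirror $-\theta_k$ (which collide near the endpoints $t=\pm1$, i.e. $\theta\in\{0,\pi\}$) and the cross-gaps $\theta_k$ versus $-\theta_{k'}$. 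This reflection symmetry is the only structural difference with the classical Fourier setting of \cite{candes2012towards,candes2012super}.

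The core of the argument is the construction of a \emph{dual certificate}: a degree-$m$ cosine polynomial $q=\mbf c^\star(p)$, with $p\in\R^{m+1}$ of controlled norm, that (a) interpolates the signs, $q(t_i)=\mathrm{sign}(a_i)$ and $q'(t_i)=0$ at every $t_i\in\mbf T$, and (b) obeys the \emph{quadratic plateau bound} $1-\abs{q(t)}\ge\kappa\min\{m^2 d(t,\mbf T)^2,\,c_0^2\}$ for an explicit $\kappa$. I would build $q$ from a Fej\'er-type kernel $K$ of degree $m$ and its derivative, placing sources at each atom, $q=\sum_i(\alpha_i K(\cdot,t_i)+\beta_i\,\partial_t K(\cdot,t_i))$, and solving the small linear system for $(\alpha_i,\beta_i)$ imposed by (a); under \eqref{hyp:separation} this system is well conditioned by a diagonal-dominance (Schur) estimate on $K,K',K''$, which is where the threshold $5\pi/m$ and the explicit constants $c_0,c_1,c_2$ are pinned down (the hypothesis $m\ge128$ enters precisely to validate these kernel estimates). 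Building $K(\cdot,t_i)$ to respect the evenness in $\theta$ forces the $\{\pm\theta_k\}$ geometry above; the freedom in the first $d$ coefficients of $q$ — the constraints $c_k(\mu)=c_k(\mbf x)$, $k\le d$, produce free dual multipliers $\nu_0,\dots,\nu_d$ in the low-frequency part — is harmless and only enlarges the set of admissible certificates.

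With the certificate in hand I would read off the three conclusions from the optimality conditions of \eqref{def:Blasso}. Writing the optimal residual as $\hat q=\mbf c^\star(\mbf y-\mbf c(\hat{\mbf x}))/\lambda$, feasibility yields $\lVert\hat q\rVert_\infty\le1$ with $\hat q=\mathrm{sign}(\hat a_k)$ on $\mathrm{Supp}(\hat{\mbf x})$; testing the optimality conditions against $q$ produces a deterministic inequality $\int(1-\abs{q})\,\d\abs{\hat{\mbf x}}\le C\,(\lambda+\langle\mbf e,p\rangle)$. Combined with the plateau bound (b), this gives the global control (i). Inserting localized certificates $q_i$ peaked at a single $t_i$ and comparing the recovered mass near $t_i$ with $a_i$ yields the amplitude bound (ii). Finally (iii) is arithmetic: if $\abs{a_i}>c_2\lambda$ then (ii) forces a nonzero recovered mass within $c_0/m$ of $t_i$, hence an atom $\hat t\in\mathrm{Supp}(\hat{\mbf x})$ there; since the separation makes $t_i$ the nearest element of $\mbf T$ to $\hat t$, bound (i) squeezes its distance into $d(t_i,\hat t)\le[c_1\lambda/(\abs{a_i}-c_2\lambda)]^{1/2}/m$.

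The delicate, and in my view decisive, ingredient is the \emph{stochastic control of the noise}, through which the certificate survives perturbation. Both the scalar $\langle\mbf e,p\rangle$ and, more importantly, the random function $\mbf c^\star(\mbf e)(t)=\sum_{k=d+1}^m\varepsilon_k\varphi_k(t)$ must be bounded uniformly in $t$. As a Gaussian trigonometric polynomial of degree $m$, its supremum (together with that of its derivative, controlled by Bernstein's inequality) is estimated by evaluating on an $O(m)$-point grid, applying a Gaussian tail bound at each node and a union bound; this simultaneously fixes the threshold $\lambda_0=2\sigma[2(1+\eta)(m-d)\log(5(m+d+1))]^{1/2}$ and the exceptional probability $[1/(5(m+d))]^{\eta}$. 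The main obstacle is the quantitative kernel analysis of the second paragraph: securing the quadratic lower bound $1-\abs{q}\ge\kappa\,m^2 d(t,\mbf T)^2$ near $\mbf T$ (and a uniform plateau $1-\abs{q}\ge\kappa c_0^2$ away from it) with the stated explicit constants, while simultaneously absorbing the endpoint reflections — everything downstream, including $c_0,c_1,c_2$ and the precise form of \eqref{hyp:separation}, is inherited from that step.
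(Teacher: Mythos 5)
Your proposal follows essentially the same route as the paper: symmetrization of the support onto the circle via $t=\cos\theta$, importation of the Fej\'er-kernel dual certificates of Cand\`es--Fernandez-Granda with their quadratic/plateau bounds (Lemmas \ref{lem:DualCertificate} and \ref{lem:QIC}), and a Bregman-type comparison against the optimality conditions of \eqref{def:Blasso} to obtain (i), then (ii) from the localized certificates, then (iii) by arithmetic. The only substantive divergence is the uniform noise bound: you propose a grid discretization with a Gaussian tail bound, Bernstein's inequality and a union bound, whereas the paper controls $\sup_{t}\lvert\sum_{k}\varepsilon_{k}\varphi_{k}(t)\rvert$ by the Rice method (expected number of level crossings of the Gaussian process), both yielding the same $\sigma[(m-d)\log m]^{1/2}$ scale for $\lambda_{0}$.
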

\ni
In the proof of the theorem, we will need the two following lemmas, which capitalize on the recent papers \cite{candes2012towards} and build an explicit dual certificate in the frame of algebraic polynomials. More precisely, we explicitly bound from above the dual certificates by a quadratic function near the support points, as done in \cite{candes2012towards}.

\begin{lem}\label{lem:DualCertificate}
Assume \eqref{hyp:separation} holds. Then  for all $t_j \in \mbf T$, there exists a polynomial $q_{ t_j}$ of degree $ m$ such that:
\begin{enumerate}
\item $q_{t_j}( t_j) = 1$,  
\item $\forall t_l \in \mbf T \backslash\{t_j\}\,,\quad q_{t_j}(t_l) = 0$,\label{thm:enu:1}
\item if $d(t, t_j) \leq c_{0} / m$ then:\label{thm:enu:2}
 \[1-C_{2}\, m^2 d(t, t_j)^2\leq q_{ t_j}(t) \leq 1-C_{1}\, m^2 d(t, t_j)^2\,,\]
\item if $d(t,t_l) \leq c_{0} / m$ and $t_l \in \mbf T \backslash\{ t_j\}$ then:\label{thm:enu:3}
\[C_{1}\, m^2 d(t, t_j)^2\leq q_{ t_j}(t) \leq C_{2}\, m^2 d(t, t_j)^2\,,\]
\item if $d(t,t_l) > c_{0} / m$ for all $t_l \in \mbf T$ then: \label{thm:enu:4}
\[c_{0}^{2}C_{1}  \leq q_{ t_j}(t) \leq 1-c_{0}^{2}C_{1}\,,\]
\end{enumerate}
where $c_{0}=2\pi\cdot 0.1649$, $C_{1}=0.00424$, and $C_{2}=0.25$.
\end{lem}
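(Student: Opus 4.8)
The plan is to reduce everything to the trigonometric setting through the substitution $t=\cos\theta$ and then import the dual-certificate construction of \cite{candes2012towards}. Under this change of variables $\varphi_k(t)=\sqrt2\cos(k\theta)$, so a polynomial $q$ of degree $m$ in $t$ corresponds exactly to an \emph{even} trigonometric polynomial $\tilde q(\theta):=q(\cos\theta)$ of degree $m$ on the circle $\T=\mathds R/2\pi\mathds Z$, and conversely every even trigonometric polynomial of degree $m$ arises this way. The distance $d(t,t')$ becomes $|\theta-\theta'|$ with $\theta=\arccos t\in[0,\pi]$. Writing $\theta_k=\arccos t_k$, I would work with the symmetrized node set $\Theta=\{\pm\theta_1,\dots,\pm\theta_s\}$ on $\T$. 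The key geometric observation is that the minimum separation of $\Theta$ on the circle equals $\min\{\Delta(\mbf T),\,2\epsilon(\mbf T)\}$: gaps between distinct $\theta_k$'s are governed by $\Delta(\mbf T)$, while the gap between $\theta_k$ and its mirror $-\theta_k$ (or $2\pi-\theta_k$) equals $2\,d(t_k,\pm1)$, governed by $2\epsilon(\mbf T)$. Thus hypothesis \eqref{hyp:separation} furnishes exactly the separation $\geq 5\pi/m$ needed to run the Fourier-frame construction.

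Next I would build the certificate as in \cite{candes2012towards}, but on the symmetric data. Set $\tilde q=\sum_{\theta_l\in\Theta}\alpha_l\,K(\theta-\theta_l)+\beta_l\,K'(\theta-\theta_l)$, where $K$ is the Fejér-type kernel of \cite{candes2012towards}, normalized so that $\tilde q$ has degree $m$, and determine the coefficients by imposing $\tilde q(\pm\theta_j)=1$, $\tilde q(\pm\theta_l)=0$ for $l\neq j$, and $\tilde q'=0$ at every node. Since both the node configuration and the interpolation data are invariant under $\theta\mapsto-\theta$, the solution $\tilde q$ is even, so $q(t):=\tilde q(\arccos t)$ is a genuine polynomial of degree $m$ with $q(t_j)=1$ and $q(t_l)=0$, giving items 1 and 2. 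When $t_j=\pm1$ the node $\theta_j\in\{0,\pi\}$ coincides with its mirror, the derivative constraint there is automatic by evenness, and the construction degenerates gracefully.

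The analytic heart is then to transfer the quadratic and far-field estimates. I would invoke the conditioning estimate of \cite{candes2012towards}: under separation $\geq 5\pi/m$ the interpolation system is invertible with $\alpha_l$ close to $\delta_{lj}$ and $\beta_l$ small, the explicit bounds coming from the rapid decay of $K,K',K''$. Feeding these into $\tilde q$ and $\tilde q''$ yields, near the active node $\theta_j$, the two-sided bound $1-C_2\,m^2(\theta-\theta_j)^2\leq\tilde q(\theta)\leq 1-C_1\,m^2(\theta-\theta_j)^2$ on $|\theta-\theta_j|\leq c_0/m$; near any other node $\theta_l$, where $\tilde q(\theta_l)=\tilde q'(\theta_l)=0$ forces $\tilde q(\theta)$ to behave like $(\theta-\theta_l)^2$, the complementary two-sided quadratic bound; and away from all nodes the uniform separation of $\tilde q$ from $0$ and $1$. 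Rewriting $|\theta-\theta_j|=d(t,t_j)$ (and $|\theta-\theta_l|=d(t,t_l)$) translates these into items 3--5.

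The main obstacle is not the structural construction, which is routine after the trigonometric reduction, but pinning down the explicit constants $c_0=2\pi\cdot 0.1649$, $C_1=0.00424$, $C_2=0.25$. This demands a fully quantitative, non-asymptotic version of the kernel estimates of \cite{candes2012towards} at the specific separation $5\pi/m$: one must bound $K,K',K''$ and the truncated tail sums $\sum_{\theta_l\neq\theta_j}|K^{(r)}(\theta-\theta_l)|$ over the \emph{doubled} symmetric node set, then solve the resulting interpolation inequalities to extract the numerics. The symmetrization roughly doubles the number of interfering nodes, so some care is needed to verify that the geometric tail-sum bounds still close with the stated constants; this is precisely where the slightly conservative threshold $5\pi/m$, rather than a sharper one, is spent.
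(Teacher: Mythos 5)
Your reduction to the trigonometric setting is the right skeleton and matches the paper: symmetrize the nodes to $\pm\arccos(t_k)$, check that \eqref{hyp:separation} yields the Fourier-frame separation condition (the paper rescales to $[0,1]$ and gets spacing $\geq 2.5/m$), build an even trigonometric certificate, and pull back through $\arccos$. Your evenness argument (uniqueness of the kernel interpolant plus symmetry of the data and of $K$, $K'$) is exactly the one the paper uses. But your plan has a genuine gap at the decisive step, and you have flagged it yourself as ``the main obstacle'': you propose to interpolate the data $\tilde q(\pm\theta_j)=1$, $\tilde q(\pm\theta_l)=0$ directly, which is \emph{not} a unit-modulus sign pattern, so Proposition 2.1 and Lemma 2.5 of \cite{candes2012towards} do not apply to it and you would indeed have to redo all of their quantitative kernel and tail-sum estimates from scratch to extract $c_0$, $C_1$, $C_2$. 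The paper avoids this entirely: it invokes the existing $\pm1$ certificate (value $+1$ at $\pm x_j$, value $-1$ at every other symmetrized node, $|\tilde q|<1$ elsewhere, with the explicit constants $0.3353$ and $0.1649$ already proved in \cite{candes2012towards}), and then simply sets $q_{t_j}=\tfrac12 p_{x_j}(\arccos t)+\tfrac12$. The affine shift converts the $\{+1,-1\}$ interpolation into the required $\{1,0\}$ interpolation and converts the bound $|\tilde q|\leq 1-0.3353\,m^2(x-x_l)^2$ into both the upper bound of item 3 and the lower bound of item 4, with $C_1=0.3353/(8\pi^2)=0.00424$. This one-line trick is what makes the constants come for free, and it also disposes of your worry about the doubled node set: the black-box result applies to any set meeting the separation condition, regardless of cardinality.

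A second missing ingredient: the complementary bounds in items 3 and 4 (the lower bound $q_{t_j}(t)\geq 1-C_2\,m^2d(t,t_j)^2$ near $t_j$ and the upper bound $q_{t_j}(t)\leq C_2\,m^2 d(t,t_l)^2$ near other nodes, with $C_2=0.25$) do not come from the coefficient estimates at all. The paper gets them from Bernstein's inequality: since $\sup|p_{x_j}|=1$, one has $\sup|p_{x_j}''|\leq m^2$, and a second-order Taylor expansion at a node where $p_{x_j}'$ vanishes gives the quadratic bound with constant $\tfrac12\cdot\tfrac12=\tfrac14$ after the affine shift. Your appeal to ``feeding the coefficient bounds into $\tilde q''$'' could in principle be made to work, but it is not how the stated constant $C_2=0.25$ arises, and without Bernstein you would again be forced into a fresh quantitative analysis. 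In short: correct scaffolding, but the two ideas that actually close the proof with the stated numerics (the affine shift of a sign certificate, and Bernstein for the second derivative) are absent.
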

\begin{proof}
By symmetrizing the support, we can use existing results for real trigonometric polynomials. Let $X = \frac{1}{2\pi}\left( \arccos(\mbf T) \bigcup [-\arccos(\mbf T)] \right) +\frac{1}{2}$. Note that $X \subset [0,1]$. It is easy to check that \eqref{hyp:separation} implies:
\eq
\min_{(x,x') \in X\, ; \, x \neq x'} |x-x'| \geq 2.5/ m
\qe
Thus, according to Proposition 2.1 and Lemma 2.5 of \cite{candes2012towards}, for all $x_j \in X$, there exists a real trigonometric polynomial of degree $ m$, $\tilde q_{x_j}\, : \, x \mapsto \sum_{k=- m}^{ m} c_k e^{2i\pi k x}$, such that:
\begin{itemize}
\item $\tilde q_{x_j} (x_j) = \tilde q_{x_j} (-x_j) = 1$,
\item $|\tilde q_{x_j} (x) |<1, x \in [0,1] \backslash X$,
\item $\tilde q_{x_j} (x_l) = -1, \quad x_l \in X \backslash \{ x_j, -x_j\}$,
\item $\forall(x,x_l)\in [0,1]\times X\,,\ |x-x_l| \leq 0.1649 / m$, 
\[|\tilde q_{x_j} (x)| \leq 1- 0.3353\,  m^2 (x-x_l)^2\,,\]
\item $\forall x\in [0,1]\,, \ \forall x_l \in X, |x-x_l| > 0.1649 / m$, 
\[|\tilde q_{x_j} (x)| \leq 1- 0.3353 \cdot 0.1649^2\,.\]
\end{itemize}
We stress that the polynomial $\tilde q_{x_j}$ as constructed in Lemma 2.2 of \cite{candes2012towards} is even. We detail the argument here. Let $K$ stand for the square of the Fej\'er kernel, defined by
\[
K(t) = \left[ \frac{\sin\left( ( m/2 +1)\pi t \right)}{( m/2 +1)\sin(\pi t)} \right]^4.
\]
Then, in the proof of Lemma 2.5 in \cite{candes2012towards}, it is shown that there exists  a unique polynomial of the form
\eq
\label{eq:defq}
q (t) = \sum_{x_i \in X} \alpha_i K(t-x_i) + \beta_i K'(t-x_i)
\qe
satisfying
\begin{align}
 q(x_j) = q(-x_j)& = 1, \notag\\
q(x_i) &= -1,\quad\forall x_i \in X\backslash \{ x_j,-x_j\}, \label{eq:condq} \\
q'(x_i) &= 0,\quad \forall x_i \in X, \notag
\end{align}
where $\alpha_i$ and $\beta_i$ are complex numbers.
Using the symmetry of $K$, the anti-symmetry of $K'$ and the symmetry of $X$, we see that the polynomial $\tilde q := t\mapsto q(-t)$ is also of the form \eqref{eq:defq}. Using again the symmetry of $X$, we have that $\tilde q$ satisfies \eqref{eq:condq}. By unicity, $\tilde q = q$.

Thus, the trigonometric polynomial function $p_{x_j} \, : \, x \in [-\pi,\pi] \mapsto \tilde q_{x_j}\left( \frac{1}{2\pi}x + \frac{1}{2} \right)$ is real and even, so we have the expansion:
\[
p_{x_j}(x) = \sum_{k=0}^{ m} a_k \cos (k x)\,.
\]
Moreover, since $\displaystyle \sup_{x \in [0,2\pi]}| p_{x_j} (x) | = 1$, Bernstein's inequality \cite{borwein1994chebyshev} implies:
\eq
\label{eq:bernsteinp}
 \sup_{x \in [0,1]}| p_{x_j}''(x) | \leq   m^2\,.
\qe
Let $ t_j \in \mbf T$ and $x_j = \arccos ( t_j)$. We define:
\[
q_{ t_j}(t) = \frac{1}{2} p_{x_j}(\arccos t) +\frac{1}{2}= \frac{1}{2}\sum_{k=0}^m a_k T_k(t) +\frac{1}{2}\,,
\]
where $T_k$ is the k-th Chebyshev polynomial of the first kind. Lemma \ref{lem:DualCertificate} is a direct consequence of the properties verfied by $\tilde q_{x_j}$ and \eqref{eq:bernsteinp}.
\end{proof}
\ni
\begin{lem}\label{lem:QIC}
Assume \eqref{hyp:separation} holds. Then  for all $(v_{1},\ldots,v_{S})$ such that $|v_{j}| = 1$, there exists a polynomial $q$ of degree $ m$ such that:
\begin{enumerate}
\item $\forall j \in [1,S], q( t_j) = v_{j}$,  
\item if $d(t, t_j) \leq c_{0} / m$ then:\label{thm2:enu:2}
 \[ 1-| q(t) | \geq 2C_{1}\, m^2 d(t, t_j)^2\,,\]
\item if $d(t,t_l) > 2\pi \cdot 0.1649 / m$ for all $t_l \in \mbf T$ then: \label{thm2:enu:3}
\[ 1-|q(t)| \geq 2c_{0}^{2}C_{1}\,,\]
\end{enumerate}
where $c_{0}=2\pi\cdot 0.1649$ and $C_{1}=0.00424$.
\end{lem}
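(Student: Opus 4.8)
The plan is to mimic the proof of Lemma \ref{lem:DualCertificate}, passing to the symmetrized trigonometric setting and invoking the interpolation construction of \cite{candes2012towards}, but now for an arbitrary sign pattern and \emph{without} the affine rescaling used there to send the interpolated values into $\{0,1\}$. As before, set $X=\frac1{2\pi}\big(\arccos(\mbf T)\cup[-\arccos(\mbf T)]\big)+\frac12\subset[0,1]$; hypothesis \eqref{hyp:separation} again gives $\min_{x\neq x'\in X}|x-x'|\geq 2.5/m$. Writing $x_j=\frac1{2\pi}\arccos(t_j)+\frac12$, each $t_j\in\mbf T$ corresponds to the symmetric pair $\{x_j,\,1-x_j\}\subset X$, and I assign the common value $v_j$ to both members of this pair. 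This produces interpolation data on $X$ that is invariant under the reflection induced by periodicity, i.e. under $x\mapsto 1-x$.

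Next I apply the construction of \cite{candes2012towards} (their Lemma 2.2 and Lemma 2.5): there is a unique trigonometric polynomial $q$ of degree $m$ of the form \eqref{eq:defq} taking the value $v_j$ on the pair attached to $t_j$ and with $q'=0$ on all of $X$, which is the analogue of \eqref{eq:condq} with the prescribed signs in place of the $\pm1$ pattern. Exactly as in Lemma \ref{lem:DualCertificate}, I then check that $q$ is even: since $K$ is symmetric, $K'$ anti-symmetric, $X$ symmetric, and the assigned values symmetric under the reflection, the polynomial $t\mapsto q(-t)$ is again of the form \eqref{eq:defq} and satisfies the same interpolation data, so unicity forces $q(-\cdot)=q$. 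Being real and even, $q$ descends through $x=\frac1{2\pi}\arccos(t)+\frac12$ and $\cos(k\arccos t)=T_k(t)$ to a genuine algebraic polynomial in $t$ of degree $m$; I keep this descended polynomial as the certificate, with no $\tfrac12(\cdot)+\tfrac12$ shift, so that its values at the $t_j$ are the signs $v_j$ themselves. This yields the first (interpolation) property.

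The two magnitude estimates are then a transcription of the uniform bounds that \cite{candes2012towards} attaches to $q$, which hold with universal constants independent of the interpolated signs. The only bookkeeping is the change of variables $d(t,t_j)=2\pi|x-x_j|$. The near-support bound $|q(x)|\leq 1-0.3353\,m^2(x-x_l)^2$ becomes $1-|q(t)|\geq \frac{0.3353}{4\pi^2}m^2 d(t,t_j)^2=2C_1\,m^2 d(t,t_j)^2$, which is the second property with $C_1=0.3353/(8\pi^2)\approx 0.00424$; the near regime $|x-x_l|\leq 0.1649/m$ is precisely $d(t,t_j)\leq c_0/m$ for $c_0=2\pi\cdot 0.1649$. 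Likewise, the far-field bound $|q(x)|\leq 1-0.3353\cdot 0.1649^2$ becomes $1-|q(t)|\geq 0.3353\cdot 0.1649^2=2c_0^2C_1$ under the hypothesis $d(t,t_l)>c_0/m$ for every $t_l$, which is the third property.

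The one point that genuinely requires care, rather than mere translation, is the evenness/unicity step for a \emph{mixed} sign pattern: I must verify that assigning the same value $v_j$ to both members of each symmetric pair keeps the interpolation data invariant under reflection, so that the uniqueness argument of \cite{candes2012towards} still applies and returns an even $q$. This symmetry is exactly what guarantees that the trigonometric certificate descends to an algebraic polynomial in $t$ (for a non-symmetric assignment the descent would fail). Everything else is inherited from the construction underlying Lemma \ref{lem:DualCertificate}, since the uniform bound $|q|<1$ and its quadratic refinement near the support do not depend on which signs are being interpolated. I note in passing that a naive superposition $\sum_j v_j q_{t_j}$ of the bumps from Lemma \ref{lem:DualCertificate} does not suffice here, because the crude two-sided control of each $q_{t_l}$ away from $t_l$ cannot keep $\sum_{l\neq j}v_l q_{t_l}$ small near $t_j$; the direct construction above circumvents this.
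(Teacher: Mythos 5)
Your proposal is correct and follows essentially the same route as the paper: symmetrize the support to $X\subset[0,1]$, invoke the interpolation construction of Cand\`es--Fernandez-Granda for the sign pattern obtained by assigning $v_j$ to both members of each symmetric pair, use the symmetry/unicity argument (already detailed in the paper's proof of Lemma \ref{lem:DualCertificate}) to get an even trigonometric polynomial that descends to an algebraic polynomial via $T_k$, and transcribe the quadratic and far-field bounds under $d(t,t_j)=2\pi|x-x_j|$, which gives exactly $C_1=0.3353/(8\pi^2)$ and $2c_0^2C_1=0.3353\cdot 0.1649^2$. Your explicit verification that the reflection-invariant data forces evenness (and hence the descent) is precisely the point the paper leaves implicit by referring back to the previous lemma.
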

\begin{proof}
Similarly as previous lemma, if $X = \frac{1}{2\pi}\left( \arccos(\mbf T) \bigcup [-\arccos(\mbf T)] \right) +\frac{1}{2}$, then we can construct a trigonometric polynomial $\tilde q\, : \, x \mapsto \sum_{k=- m}^{ m} c_k e^{2i\pi k x}$, such that:
\begin{itemize}
\item $\tilde q(x_j) = \tilde q(-x_j) = v_{j}, \forall j \in [1,S]$,
\item $|\tilde q (x) |<1, \forall x \in [0,1] \backslash X$,
\item $\forall(x,x_l)\in [0,1]\times X\,,\ |x-x_l| \leq 0.1649 / m$, 
\[|\tilde q (x)| \leq 1- 0.3353\,  m^2 (x-x_l)^2\,,\]
\item $\forall x\in [0,1]\,, \ \forall x_l \in X, |x-x_l| > 0.1649 / m$, 
\[|\tilde q (x)| \leq 1- 0.3353 \cdot 0.1649^2\,.\]
\end{itemize}
Then $p \, : \, x \in [-\pi,\pi] \mapsto \tilde q\left( \frac{1}{2\pi}x + \frac{1}{2} \right)$ is even, so we have the expansion $p(x) = \sum_{k=0}^{ m} a_k \cos (k x)$ where $a_{k} \in \mathds R$.
Putting
\[
q \, : \, t \mapsto\sum_{k=0}^{ m} a_k \cos (k \arccos t)  =\sum_{k=0}^m a_k T_k(t)\,,
\]
we can show $q$ verifies the needed properties.
\end{proof}

\begin{proof}[Proof of Theorem \ref{thm:Main}]

We mention that the proof of (ii), which uses Lemma \ref{lem:DualCertificate}, follows the one in \cite{fernandez2013support}.

\ni
Assume that $\lambda\geq\lambda_{0}$ where $\lambda_{0}$ is described by the following lemma (the dependence in $\eta$ has been omitted). 

\begin{lem}
\label{lem:Rice}
Set $\lambda_{R}:=\sigma[8( m- d)\log(5( m+ d+1))]^{1/2}$ and $\lambda>\lambda_{R}$, then:
\eq
\notag
\P\left(\bigg\lVert \sum_{k=0}^{ m}\varepsilon_{k}\varphi_{k}\bigg\lVert_{\infty}>\lambda\right)\leq\exp\left[-\frac{\lambda^{2}-\lambda_{R}^{2}}{8\sigma^{2}( m- d)}\right]\,.
\qe
In particular, for all $\eta>0$, if
\[
\lambda_{0}(\eta):=\sigma[8(1+\eta)( m- d)\log(5( m+ d+1))]^{1/2}\,,
\]
then
\eq
\label{eq:ProbaRice}
\P\left(\bigg\lVert \sum_{k=0}^{ m}\varepsilon_{k}\varphi_{k}\bigg\lVert_{\infty}>\lambda_{0}(\eta)\right)\leq\frac1{[5( m+ d+1)]^{\eta}}\,.
\qe
\end{lem}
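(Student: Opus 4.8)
The plan is to read $W:=\sum_{k=0}^{m}\varepsilon_k\varphi_k$ as a centered Gaussian process on $[-1,1]$ with smooth paths and to estimate the tail of $\sup_{[-1,1]}\abs{W}$ by the Rice method, following the strategy of \cite{azais2014spike}. First I would reparametrize by $\theta=\arccos t$: since $\varphi_k(\cos\theta)=\sqrt2\cos(k\theta)$ for $k\ge1$, $\varphi_0\equiv1$, and $\varepsilon_k=0$ for $k\le d$, the process becomes the Gaussian trigonometric polynomial $g(\theta):=W(\cos\theta)=\sum_{k=d+1}^{m}\varepsilon_k\varphi_k(\cos\theta)$ of degree $m$. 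As $g$ is even and $2\pi$-periodic, $\sup_{t\in[-1,1]}\abs{W(t)}=\sup_{\theta\in[0,2\pi]}\abs{g(\theta)}$, so it suffices to bound $\P(\sup_{[0,2\pi]}\abs g>\lambda)$. The whole statement reduces to the single estimate $\P(\sup\abs g>\lambda)\le 5(m+d+1)\exp[-\lambda^2/(8\sigma^2(m-d))]$, because this equals $\exp[-(\lambda^2-\lambda_R^2)/(8\sigma^2(m-d))]$ once one notes $5(m+d+1)=\exp[\lambda_R^2/(8\sigma^2(m-d))]$; the ``in particular'' claim is then immediate, since plugging $\lambda=\lambda_0(\eta)$ gives $(\lambda_0(\eta)^2-\lambda_R^2)/(8\sigma^2(m-d))=\eta\log(5(m+d+1))$.

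The core step is the Rice bound. I would use $\{\sup\abs g>\lambda\}\subset\{\abs{g(\theta_0)}>\lambda\}\cup\{g\text{ crosses }\lambda\}\cup\{g\text{ crosses }-\lambda\}$ for a reference point $\theta_0$ of maximal variance (e.g.\ $\theta_0=0$), and apply Markov's inequality to the number of crossings. By Rice's formula for a $C^1$ centered Gaussian process, the expected number of crossings of a level $u$ equals $\int_0^{2\pi}\E\big[\abs{g'(\theta)}\mid g(\theta)=u\big]\,p_{g(\theta)}(u)\,\d\theta$, where $p_{g(\theta)}$ is the Gaussian density of $g(\theta)$. Writing $\sigma_0^2(\theta)=\Var g(\theta)$, $\sigma_1^2(\theta)=\Var g'(\theta)$ and $\rho(\theta)=\mathrm{Cov}(g(\theta),g'(\theta))=\tfrac12(\sigma_0^2)'(\theta)$, the conditional law of $g'(\theta)$ given $g(\theta)=u$ is $\mathcal N(\mu,\tau^2)$ with $\mu=\rho u/\sigma_0^2$ and $\tau^2=\sigma_1^2-\rho^2/\sigma_0^2$, and from $\abs{\varphi_k(\cos\theta)}\le\sqrt2$ one gets the uniform bounds $\sigma_0^2\le 2\sigma^2(m-d)$ and $\sigma_1^2\le 2\sigma^2 m^2(m-d)$.

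The estimates then split cleanly. Using $\E\abs{\mathcal N(\mu,\tau^2)}\le\abs\mu+\tau$ together with $\abs\rho\le\sigma_0\sigma_1$ and $\tau\le\sigma_1$, the crossing integrand is at most $\tfrac{\sigma_1}{\sqrt{2\pi}\,\sigma_0}\big(1+\abs u/\sigma_0\big)\exp(-u^2/2\sigma_0^2)$. The ``$1$'' term carries the factor $\exp(-u^2/2\sigma_0^2)\le\exp(-\lambda^2/(4\sigma^2(m-d)))$, while for the ``$\abs u/\sigma_0$'' term the elementary inequality $\abs u\,e^{-u^2/2\sigma_0^2}\le\sigma_0\sqrt{2/e}\,e^{-u^2/4\sigma_0^2}$ trades one power of $u$ against a halving of the exponent, yielding $\exp(-\lambda^2/(8\sigma^2(m-d)))$ at $u=\pm\lambda$. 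This second term dominates and is precisely the source of the constant $8$: it reflects the correlation between $g$ and $g'$ coming from the non-stationarity of a cosine-only process. The remaining prefactor is, up to constants, $\int_0^{2\pi}\sigma_1/\sigma_0\,\d\theta$, the expected crossing count of a degree-$m$ trigonometric polynomial, together with the endpoint contribution $\P(\abs{g(\theta_0)}>\lambda)\le 2\exp(-\lambda^2/(4\sigma^2(m-d)))$, which has a strictly better exponent and is absorbed.

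The main obstacle is exactly the non-stationarity of $g$. Because only cosine modes appear, $g(\theta)$ and $g'(\theta)$ are genuinely correlated and $\sigma_0^2(\theta)$ oscillates and may be small for some $\theta$, so the stationary Kac--Rice crossing formula is unavailable and $\sigma_1/\sigma_0$ cannot be bounded pointwise by $O(m)$. The delicate point is therefore to retain enough of the Gaussian factor $\exp(-\lambda^2/(2\sigma_0^2))$ in the regions where $\sigma_0$ is small — where it decays far faster than $\sigma_1/\sigma_0$ grows — so that the product $\tfrac{\sigma_1}{\sigma_0}\exp(-\lambda^2/(4\sigma_0^2))$ stays controlled, and to show that the integral over a full period lands below the prescribed $5(m+d+1)$. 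Once this crossing count is pinned down with the stated constants, combining it with the absorbed endpoint term gives the announced inequality, and the concentration statement \eqref{eq:ProbaRice} follows by substitution.
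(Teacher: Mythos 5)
Your overall strategy is exactly the paper's: pass to $\theta=\arccos t$ to obtain an even Gaussian trigonometric polynomial, bound the tail of its supremum by one point evaluation plus an expected number of level crossings (Rice's formula), use the Gaussian regression formulas for the law of $g'(\theta)$ given $g(\theta)=u$, and absorb powers of $u/\sigma_0(\theta)$ into half of the Gaussian exponent --- which, as you correctly observe, is the origin of the constant $8$. Your reduction of the lemma to the single bound $\P(\sup|g|>\lambda)\le 5(m+d+1)e^{-\lambda^2/(8\sigma^2(m-d))}$, and the derivation of \eqref{eq:ProbaRice} from it, are correct and complete.

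The gap is in the crossing estimate itself, which you leave as a ``delicate point'' rather than close, and the one concrete formula you propose there would fail: the contribution of the conditional standard deviation is $\frac{\sigma_1(\theta)}{\sqrt{2\pi}\,\sigma_0(\theta)}e^{-u^2/2\sigma_0^2(\theta)}$, and its remaining prefactor is \emph{not} $\int_0^{2\pi}\sigma_1/\sigma_0\,\d\theta$ up to constants --- that integral can be infinite (for $m-d=1$ one has $\sigma_0(\theta)\propto|\cos(m\theta)|$, and $1/|\cos(m\theta)|$ is not integrable near its zeros). The paper closes this by never separating the density from the factor $1/\sigma_0$: since $u>\sqrt{2(m-d)}\,\sigma\ge\sigma_0(\theta)$ (this is precisely where the hypothesis $\lambda>\lambda_R$ enters), the map $\sigma\mapsto\psi_{\sigma}(u)$ is increasing on $(0,u)$, hence $\psi_{\sigma_0(\theta)}(u)\le\psi_{\sqrt{2(m-d)}\sigma}(u)$ uniformly in $\theta$, and only $\int_0^{\pi}\sigma_1(\theta)\,\d\theta$ remains, which is handled by Cauchy--Schwarz. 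Similarly, for the conditional-mean term the paper does not use $|\rho|\le\sigma_0\sigma_1$ but the explicit bound $|r_{0,1}(\theta,\theta)|\le\sum_{k=d+1}^{m}k=(m+d+1)(m-d)/2$, and then absorbs $u^3/\sigma_0^3$ into one factor $e^{-u^2/4\sigma_0^2}$ via $v^3e^{-v^2/4}\le\mathrm{const}$; your variant could be made to work, but the same uniform-in-$\theta$ absorption is still required. Until the integral over $\theta$ is actually carried out, the constant $5(m+d+1)$ --- which is the entire quantitative content of the lemma --- is not established.
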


\ni
A proof of Lemma \ref{lem:Rice} can be found in Appendix \ref{app:Rice}. Observe that the condition of the following lemma is met.

\begin{lem}
\label{lem:MajorationPrediction}
Let $\hat{\mbf x}$ be a solution to \eqref{def:Blasso}. Then the following holds:
\eq
\label{eq:MajorationPrediction}
\forall P\in\mathrm{Span}(\mathscr F)\,,\quad|\int_{-1}^1P\mathrm{d}(\hat{\mbf x}-\mbf x)|
\leq(\lambda+\lambda_{0})\lVert P\lVert_{\infty}\,,
\qe
where $\displaystyle\lambda_{0}\geq\lVert \sum_{k=0}^{ m}\varepsilon_{k}\varphi_{k}\lVert_{\infty}$.
\end{lem}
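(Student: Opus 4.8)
The plan is to exploit that $\mbf x$ is itself feasible for \eqref{def:Blasso} (it trivially satisfies its own moment constraints, so $\mbf x\in\mbf C_{d}(\mbf x)$), which lets me turn the optimality of $\hat{\mbf x}$ into a pointwise statement about the residual. First I would rewrite the prediction functional using the orthonormality of $\mathscr F$: writing $P=\sum_{k=0}^{m} p_k\varphi_k$, one has $\int P\,\d(\hat{\mbf x}-\mbf x)=\sum_{k=0}^{m} p_k\,(c_k(\hat{\mbf x})-c_k(\mbf x))=\langle \mbf p,\mbf c(\hat{\mbf x})-\mbf c(\mbf x)\rangle$. Inserting $\pm\mbf y$ and recalling $\mbf y=\mbf c(\mbf x)+\mbf e$ splits this into a residual part and a noise part, $\int P\,\d(\hat{\mbf x}-\mbf x)=\langle\mbf p,\mbf c(\hat{\mbf x})-\mbf y\rangle+\langle\mbf p,\mbf e\rangle=\int P\,R\,\d\measure+\int P\,E\,\d\measure$, where $R:=\sum_k (c_k(\hat{\mbf x})-y_k)\varphi_k$ is the residual polynomial and $E:=\sum_k\varepsilon_k\varphi_k$ is the noise polynomial.

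The noise term is immediate: since $\measure$ is a probability measure, $\|\cdot\|_{L^1(\measure)}\le\|\cdot\|_\infty$, so by H\"older $|\int P\,E\,\d\measure|\le\|P\|_\infty\,\|E\|_{L^1(\measure)}\le\|P\|_\infty\,\|E\|_\infty\le\lambda_0\|P\|_\infty$, using the standing hypothesis $\lambda_0\ge\|E\|_\infty$. The very same estimate reduces the residual term to a sup-norm bound on $R$: it suffices to prove $\|R\|_\infty\le\lambda$, for then $|\int P\,R\,\d\measure|\le\lambda\|P\|_\infty$, and adding the two contributions yields exactly $(\lambda+\lambda_0)\|P\|_\infty$.

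The bound $\|R\|_\infty\le\lambda$ is the heart of the matter and should come from first-order optimality. Writing the objective of \eqref{def:Blasso} as a smooth quadratic plus $\lambda\|\cdot\|_{TV}$, the optimality of $\hat{\mbf x}$ says that $-R$ lies in the subdifferential of $\lambda\|\cdot\|_{TV}$ at $\hat{\mbf x}$ along feasible directions; since any subgradient of the total-variation norm is a continuous function of sup-norm at most $\lambda$, this is precisely the control sought. Concretely I would test optimality against perturbations $\hat{\mbf x}+t\nu$ with $\nu$ a finite signed measure, deduce $|\int R\,\d\nu|\le\lambda\|\nu\|_{TV}$, and then let $\nu$ run over (approximations of) point masses to recover $\|R\|_\infty\le\lambda$.

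The main obstacle is exactly the moment constraint defining $\mbf C_{d}(\mbf x)$: only perturbations $\nu$ with $c_k(\nu)=0$ for $k\le d$ keep $\hat{\mbf x}+t\nu$ feasible, so the raw optimality condition a priori controls $R$ only modulo polynomials of degree at most $d$ (the Lagrange multipliers attached to the noiseless moments), giving at first only $\inf_{N\in\mathrm{Span}(\varphi_0,\dots,\varphi_d)}\|R-N\|_\infty\le\lambda$. The route I would take to close this gap is to use that the two competitors share their first $d+1$ moments: since $\hat{\mbf x},\mbf x\in\mbf C_{d}(\mbf x)$ and $\varepsilon_k=0$ for $k\le d$, one checks $c_k(\hat{\mbf x})-y_k=0$ for $k\le d$, so $R$ carries no component of degree $\le d$ and, dually, $\mbf c(\hat{\mbf x}-\mbf x)$ annihilates that degree range. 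Carrying this orthogonality through the optimality inequality is what should upgrade the ``modulo $\Pi_d$'' estimate to the clean $\lambda\|P\|_\infty$ bound; verifying that this upgrade is lossless is the one step I expect to require genuine care.
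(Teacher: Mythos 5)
Your decomposition into a residual polynomial $R=\sum_{k}(c_k(\hat{\mbf x})-y_k)\varphi_k$ and a noise polynomial $E=\sum_k\varepsilon_k\varphi_k$, and your bound on the noise term, coincide with the paper's computation. The gap is in the residual term, at exactly the step you flag. First-order optimality for \eqref{def:Blasso}, tested along \emph{feasible} perturbations only, yields the polynomial $\hat P=\sum_{k=0}^{d}\hat a_k\varphi_k+\sum_{k=d+1}^{m}(y_k-c_k(\hat{\mbf x}))\varphi_k$ of \eqref{eq:OptimalityPremierOrdre} with $\lVert\hat P\rVert_\infty\leq\lambda$, i.e.\ it controls $R$ only up to one specific $N\in\mathrm{Span}(\varphi_0,\dots,\varphi_d)$. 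Your proposed upgrade to $\lVert R\rVert_\infty\leq\lambda$ does not follow from the fact that $R$ has no low-degree coefficients: that is an $L^2(\measure)$-orthogonality statement, and $L^2$-orthogonality to $\mathrm{Span}(\varphi_0,\dots,\varphi_d)$ does not make the best \emph{sup-norm} approximant from that space equal to zero. For instance $T_1+T_2$ is $\measure$-orthogonal to constants, yet $\lVert T_1+T_2-\tfrac{7}{16}\rVert_\infty=\tfrac{25}{16}<2=\lVert T_1+T_2\rVert_\infty$; so a bound $\lVert R-N\rVert_\infty\leq\lambda$ can hold while $\lVert R\rVert_\infty>\lambda$. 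Likewise, the sub-step ``let $\nu$ run over point masses'' is inadmissible: a point mass does not satisfy $c_k(\nu)=0$ for $k\leq d$, so $\hat{\mbf x}+t\nu$ leaves $\mbf C_d(\mbf x)$ and optimality says nothing in those directions.

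The repair --- which is what the paper's Appendix C actually does --- is to give up on $\lVert R\rVert_\infty\leq\lambda$ and instead cancel the multiplier inside the pairing. Since $c_k(\hat{\mbf x})=y_k$ for $k\leq d$, one has $\int PR\,\d\measure=\int P_{>d}\,R\,\d\measure$ with $P_{>d}=\sum_{k>d}p_k\varphi_k$; and since $P_{>d}$ is $\measure$-orthogonal to $N$, this equals $\int P_{>d}(R-N)\,\d\measure=-\int P_{>d}\,\hat P\,\d\measure$. Cauchy--Schwarz in $L^2(\measure)$ then gives
\[
\Big|\int_{-1}^{1} P\,R\,\d\measure\Big|\leq\lVert P_{>d}\rVert_{2}\,\lVert\hat P\rVert_{2}\leq\lVert P\rVert_{2}\,\lVert\hat P\rVert_{\infty}\leq\lambda\,\lVert P\rVert_{\infty}\,,
\]
using that $\measure$ is a probability measure and $\mathscr F$ is orthonormal. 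So your decomposition and noise estimate are sound; the missing idea is to keep the Lagrange-multiplier polynomial $N$ in the estimate and kill it by orthogonality against $P_{>d}$, rather than trying to remove it from $R$ in sup norm.
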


 \ni
A proof of Lemma \ref{lem:MajorationPrediction} can be found in Appendix \ref{app:MajorationPrediction}. One can prove that there exists a solution $\hat{\mbf x}$ to \eqref{def:Blasso} with finite support, see Lemma \ref{lem:SolDiscrete}. Set: 
\[
\hat{\mbf x}=\sum_{k=1}^{\hat s}\hat{a}_{k}\delta_{\hat{t}_{k}}\,.
\]
Set $v_{j}=\overline{a_{j}}/|a_{j}|$ for $j=1,\ldots,s$ and consider $q=\sum_{k=0}^{ m}\beta_{k}\varphi_{k}$ the algebraic polynomial described in Lemma \ref{lem:QIC}. Set:
\[
\mathscr D:=\lVert\hat{\mbf x}\lVert_{TV}-\lVert\mbf x\lVert_{TV}-\int_{-1}^{1}q\mathrm d(\hat{\mbf x}-\mbf x)\,.
\]
Note that $\mathscr D\geq 0$. Since $\mbf x$ is feasible, it holds:
\[
\frac12\lVert \mbf c(\hat{\mbf x})-\mbf y\lVert^2_2+\lambda\mathscr D+\lambda\int_{-1}^{1}q\mathrm d(\hat{\mbf x}-\mbf x)\leq\frac12\lVert \mbf e\lVert^2_2\,.
\]
Hence, using the fact that for any $\mu\in \mathcal M$, $\langle \mbf c(\mu), \beta \rangle = \int_{-1}^1  q d\mu$,
\[
\frac12\lVert \mbf c(\hat{\mbf x})-\mbf y+\lambda \beta\lVert^2_2+\lambda\mathscr D\leq\frac12\lVert \mbf e\lVert^2_2+\frac12\lVert\lambda \beta\lVert_{2}^{2}-\lambda\langle\mbf e,\beta \rangle\,.
\]
Eventually,
\[
\mathscr D\leq \frac{\lambda}2\bigg\lVert \beta-\frac{\mbf e}\lambda\bigg\lVert_{2}^{2}\,.
\]
Using Lemma \ref{lem:Rice}, we have with probability greater than $1-\frac1{[5( m+ d+1)]^{\eta}}$:
\[
\bigg\lVert\sum_{k=0}^{ m}(\beta_{k}-\varepsilon_{k}/\lambda)\varphi_{k}\bigg\lVert_{\infty}\leq 2\,,
\]
so that:
\eq
\label{eq:ControlBregmann}
\mathscr D\leq 2{\lambda}\,.
\qe
Moreover, using Lemma \ref{lem:QIC}, note that:
\begin{align}
\mathscr D&=\lVert\hat{\mbf x}\lVert_{TV}-\int_{-1}^{1}q\mathrm d\hat{\mbf x}\,,\notag\\
&\geq\sum_{k=1}^{\hat s}\lvert\hat{a}_{k}\lvert(1-\lvert q({\hat{t}_{k}})\lvert)\,,\notag\\
&\geq\sum_{k=1}^{\hat s}|\hat{a}_{k}|\min\{2C_{1} m^{2}\min_{ t\in\mbf T}d(t, {\hat{t}_{k}})^{2};2c_{0}^{2}C_{1}\}\,,\label{eq:DivergenceMin}
\end{align}
where $c_{0}=2\pi\cdot 0.1649$ and $C_{1}=0.00424$ and the proof of (i) follows.

Now, let $t_j \in\mbf T$ and consider the polynomial $q_{t_j}$ described in Lemma \ref{lem:DualCertificate}. Using \eqref{eq:DivergenceMin} we get that:
\begin{align}
&|\sum_{\{k\ |\ d( t_j,\hat{ t}_{k})>\frac{c_{0}}{ m}\}}\hat{ a}_{k}q_{t_j}({\hat{ t}_{k}})+\sum_{\{k\ |\ d( t_j,\hat{ t}_{k})\leq \frac{c_{0}}{ m}\}}\hat{ a}_{k}(q_{t_j}({\hat{t}_{k}})-1)|\notag\\
&\leq\sum_{\{k\ |\ d(t_j,\hat{\mbf t}_{k})>\frac{c_{0}}{ m}\}}|\hat{a}_{k}||q_{t_j}|({\hat{t}_{k}})+\sum_{\{k\ |\ d(t_j,\hat{t}_{k})\leq \frac{c_{0}}{ m}\}}|\hat{a}_{k}||q_{t_j}-1|({\hat{t}_{k}})\,,\notag\\
&\leq\sum_{k=1}^{\hat s}|\hat{ a}_{k}|\min\{C_{2} m^{2}\min_{ t\in\mbf T}d( t, {\hat{t}_{k}})^{2};1-c_{0}^{2}C_{1}\}\,,\notag\\ 
&\leq C'\times\sum_{k=1}^{\hat s}|\hat{ a}_{k}|\min\{2C_{1} m^{2}\min_{ t\in\mbf T}d(t, {\hat{ t}_{k}})^{2};2c_{0}^{2}C_{1}\}\,,\notag\\
&\leq 2C'\lambda\,.\label{eq:DivergenceMax}
\end{align}
where $C_{2}=0.25$ and $C'=\max\{\frac{C_{2}}{2C_{1}};\frac{1-c_{0}^{2}C_{1}}{2c_{0}^{2}C_{1}}\}=109.36$. Invoking \eqref{eq:MajorationPrediction}, we deduce that for all $i=1,\ldots,s$,
\begin{align*}
|a_{i}-\hat{\mbf x}(\mbf t_{i}&+\mathscr B({c_{0}}/{ m}))|\leq|\int q_{ t_{i}}\mathrm d\mbf x - \int q_{t_{i}}\mathrm d\hat{\mbf x}\\
&+\sum_{\{k\ |\ d(\mbf t_{i},\hat{\mbf t}_{k})>\frac{c_{0}}{ m}\}}\hat{ a}_{k}q_{\mbf t_{i}}({\hat{\mbf t}_{k}})+\sum_{\{k\ |\ d( t_{i},\hat{t}_{k})\leq \frac{c_{0}}{ m}\}}\hat{a}_{k}(q_{t_{i}}({\hat{ t}_{k}})-1) |\,,\\
\leq&2(C'+1)\lambda\,,
\end{align*}
where $t_{i}+\mathscr B({c_{0}}/{ m})=\{t\ |\ d( t_{i},t)\leq {c_{0}}/{ m}\}$, proving (ii). Finally, observe that (iii) is a consequence of the aforementioned inequalities.
\end{proof}

\section{Non-uniform spline reconstruction}
\subsection{Notations}
In this section, we assume that $ d\geq0$. Observe that the frame investigated in this paper covers the recovery problem of a non-uniform spline of degree $ d$ from its projection onto ${\mathds R}_{ m- d-1}[X]$, the space of algebraic polynomials of degree at most $ m- d-1$. Indeed, consider an univariate spline $\mbf f$ of degree $ d$ over the knot sequence $\mbf T=\{-1,  t_{1},  t_{2},\ldots,  t_{s},1\}$, that is a continuously differentiable function $\mbf f$ of order ${ d-1}$ piecewise-defined by:
\eq\notag
\mbf f=\ind_{[-1,  t_{1})}\,\mbf P_{0}+\ind_{[  t_{1},  t_{2})}\,\mbf P_{1}+\ldots+\ind_{[  t_{s-1},  t_{s})}\,\mbf P_{s-1}+\ind_{[  t_{s},1]}\,\mbf P_{s}\,,
\qe
where $\mbf P_{k}$ belongs to $\mathds R_{ d}[X]$, and for all subset $E\subseteq[-1,1]$, $\ind_{E}(t)$ equals $1$ if $t$ belongs to $E$ and $0$ otherwise. Consider $\mbf f^{( d+1)}$, the $( d+1)$-th distributional derivative of $\mbf f$. We have :
\eq\notag
\mbf f^{( d+1)}=\sum_{k=1}^s (\mbf P_{k}^{( d)}-\mbf P^{( d)}_{k-1})\,\delta_{  t_k}\,,
\qe
where $\mbf P_{k}^{( d)}\in{\mathds R}$ is the $ d$-th derivative of $\mbf P_{k}$.

The next lemma links the moments of the spline $\mbf f$ to the ones of the signed measure $\mbf f ^{(d+1)}$.
\begin{lem}
\label{lem:Matrix}
\eq
\label{eq:MatrixFormulationAssumption1}
\mbf c(\mbf f^{(\mbf d+1)})=\left[\begin{array}{cc}0 & W_1 \\(-1)^{\mbf d+1}\,\mathrm{Id}_{\mbf m-\mbf d} & W_2\end{array}\right]\left(\begin{array}{c}\mbf p(\mbf f) \\\mbf b\end{array}\right)\,,
\qe
where:
\begin{itemize}
\item
 $\mbf p(\mbf f)=(\langle \mbf f, \varphi_{ d+1}^{( d+1)}\rangle,\langle \mbf f, \varphi_{ d+2}^{( d+1)}\rangle,\ldots,\langle \mbf f, \varphi_{ m}^{( d+1)}\rangle)$, 
 \item
 $\mbf b=(\mbf P_{0}(-1),\ldots,\mbf P^{( d-1)}_{0}(-1),\mbf P^{( d)}_{0}(-1),\mbf P_{s}(1),\ldots,\mbf P^{( d-1)}_{s}(1),\mbf P^{( d)}_{s}(1))$,
 \item and $W_{1},W_{2}$ are known matrices, defined by relations \eqref{eq:MediumMomentSplines}, \eqref{eq:LowMomentSplines} and \eqref{eq:FirstMomentSplines}, whose entries belong to the set $\{-1,1,\sqrt 2\,(-1)^{m}\mbf w_{k,l}\,;\ m\in\{0,1\}\ \mathrm{and}\ k,l\in\mathds N\}$ where $\mbf w_{k,l}$ are constants defined in \eqref{eq:wkl}.
\end{itemize}
\end{lem}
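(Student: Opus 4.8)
The plan is to compute $c_k(\mbf f^{(d+1)})$ directly from its discrete form and then to read off the two announced blocks by integrating by parts. First I would record that $\mbf f^{(d+1)}=\sum_{l=1}^s(\mbf P_l^{(d)}-\mbf P_{l-1}^{(d)})\delta_{t_l}$ gives, for every $k=0,\ldots,m$,
\[
c_k(\mbf f^{(d+1)})=\sum_{l=1}^s\big(\mbf P_l^{(d)}-\mbf P_{l-1}^{(d)}\big)\,\varphi_k(t_l)\,.
\]
The whole lemma then amounts to re-expressing this knot sum through the pairings $\langle\mbf f,\varphi_k^{(d+1)}\rangle=\int_{-1}^1\mbf f\,\varphi_k^{(d+1)}\,\d t$ and the boundary vector $\mbf b$.

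Second, on each subinterval $[t_l,t_{l+1})$ (with $t_0=-1$, $t_{s+1}=1$) I would apply the $(d+1)$-fold integration by parts identity
\[
\int_{t_l}^{t_{l+1}}\mbf P_l\,\varphi_k^{(d+1)}\,\d t=\sum_{i=0}^{d}(-1)^i\Big[\mbf P_l^{(i)}\varphi_k^{(d-i)}\Big]_{t_l}^{t_{l+1}}+(-1)^{d+1}\int_{t_l}^{t_{l+1}}\mbf P_l^{(d+1)}\varphi_k\,\d t\,,
\]
in which the last integral vanishes because $\deg\mbf P_l\le d$. Summing over $l$ and grouping the endpoint contributions knot by knot, the terms at an interior knot $t_l$ collapse to $\sum_{i=0}^d(-1)^i(\mbf P_{l-1}^{(i)}(t_l)-\mbf P_l^{(i)}(t_l))\varphi_k^{(d-i)}(t_l)$; here the crucial point is that $\mbf f\in C^{d-1}$, so $\mbf P_{l-1}^{(i)}(t_l)=\mbf P_l^{(i)}(t_l)$ for $0\le i\le d-1$ and only the top-order term $i=d$ survives. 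This leaves exactly the jump $(-1)^d(\mbf P_{l-1}^{(d)}(t_l)-\mbf P_l^{(d)}(t_l))\varphi_k(t_l)$ at each interior knot and an uncancelled boundary term at $\pm1$, yielding
\[
\langle\mbf f,\varphi_k^{(d+1)}\rangle=(-1)^{d+1}c_k(\mbf f^{(d+1)})+B_k\,,\qquad B_k=\sum_{i=0}^d(-1)^i\big[\mbf P_s^{(i)}(1)\varphi_k^{(d-i)}(1)-\mbf P_0^{(i)}(-1)\varphi_k^{(d-i)}(-1)\big]\,,
\]
where $B_k$ is a fixed linear combination of the components of $\mbf b$.

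Third, I would split on the index $k$. For $0\le k\le d$ the polynomial $\varphi_k$ has degree at most $d$, so $\varphi_k^{(d+1)}\equiv0$ and the identity forces $c_k(\mbf f^{(d+1)})=(-1)^d B_k$, a pure function of $\mbf b$; these $d+1$ rows form the block $\begin{bmatrix}0 & W_1\end{bmatrix}$. For $d+1\le k\le m$ the pairing $\langle\mbf f,\varphi_k^{(d+1)}\rangle$ is precisely the $(k-d)$-th coordinate of $\mbf p(\mbf f)$, and solving the identity gives $c_k(\mbf f^{(d+1)})=(-1)^{d+1}\langle\mbf f,\varphi_k^{(d+1)}\rangle-(-1)^{d+1}B_k$, whose first term contributes the block $(-1)^{d+1}\mathrm{Id}_{m-d}$ acting on $\mbf p(\mbf f)$ and whose second term contributes $W_2$ acting on $\mbf b$. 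This reproduces the announced $2\times2$ block structure.

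Finally, making $W_1$ and $W_2$ explicit is a purely computational matter: one only needs the values $\varphi_k^{(d-i)}(\pm1)=\sqrt2\,T_k^{(d-i)}(\pm1)$ (together with the trivial term coming from $\varphi_0=1$) that appear in each $B_k$. Inserting the closed forms $T_n^{(p)}(1)=\prod_{j=0}^{p-1}\frac{n^2-j^2}{2j+1}$ and $T_n^{(p)}(-1)=(-1)^{n+p}T_n^{(p)}(1)$ yields the constants $\mbf w_{k,l}$ of \eqref{eq:wkl} and shows that every entry of $W_1,W_2$ lies in $\{-1,1,\sqrt2(-1)^m\mbf w_{k,l}\}$. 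I expect the main obstacle to be the sign and index bookkeeping — tracking the factors $(-1)^{d+1}$ and deciding which boundary contributions land in $W_1$ versus $W_2$ — rather than the analysis, whose heart is the integration-by-parts cancellation that exploits the $C^{d-1}$ regularity of the spline to retain only the knot jumps.
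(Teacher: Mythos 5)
Your proof is correct and follows essentially the same route as the paper's: a $(d+1)$-fold integration by parts linking $c_k(\mbf f^{(d+1)})$, $\langle\mbf f,\varphi_k^{(d+1)}\rangle$ and the boundary data, followed by the closed forms $T_k^{(l)}(1)=\mbf w_{k,l}$, $T_k^{(l)}(-1)=(-1)^{k+l}\mbf w_{k,l}$; the only difference is that the paper performs one global distributional integration by parts on $\langle\mbf f^{(d+1)},\varphi_k\rangle$, whereas you work piecewise and make explicit the cancellation at interior knots coming from the $C^{d-1}$ regularity. Your sign bookkeeping checks out against \eqref{eq:MediumMomentSplines}--\eqref{eq:FirstMomentSplines}.
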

\begin{proof}
By induction, for $k=0,1,\ldots,{ m}$,
\eq\notag
c_{k}(\mbf f^{( d+1)})=\langle \mbf f^{( d+1)}, \varphi_{k}\rangle=\sum_{l=0}^{ d}(-1)^{l}\big[\mbf f^{( d-l)} \varphi_{k}^{(l)}\big]_{-1}^{1}+(-1)^{ d+1}\langle \mbf f, \varphi_{k}^{( d+1)}\rangle\,.\\
\qe
Moreover, it is known that for  all integers $k, l$, $T_{k}^{(l)}(-1)=(-1)^{k+l}\mbf w_{k,l}$ and $T_{k}^{(l)}(1)=\mbf w_{k,l}$ where:
\eq\label{eq:wkl}
\mbf w_{k,l}:= \ind_{\{ k \geq l\}} \prod_{j=0}^{l-1}\frac{k^{2}-j^{2}}{2j+1}\, .
\qe
Therefore, for $ m\geq k> d$,
\begin{align}\label{eq:MediumMomentSplines}
c_{k}(\mbf f^{( d+1)})=&\sqrt 2\,\sum_{l=0}^{ d}(-1)^{l}\mbf w_{k,l}\,\mbf P_{s}^{( d-l)}(1)\\&+(-1)^{k+1}\,\sqrt 2\,\sum_{l=0}^{ d}\mbf w_{k,l}\,\mbf P_{0}^{( d-l)}(-1)+(-1)^{ d+1}\langle \mbf f, \varphi_{k}^{( d+1)}\rangle\,,\notag
\end{align}
for $ d\geq k\geq1$,
\begin{align}
c_{k}(\mbf f^{( d+1)})=&\sqrt 2\,\sum_{l=0}^{k}(-1)^{l}\mbf w_{k,l}\,\mbf P_{s}^{( d-l)}(1)
+(-1)^{k+1}\,\sqrt 2\,\sum_{l=0}^{k}\mbf w_{k,l}\,\mbf P_{0}^{( d-l)}(-1)\,,\label{eq:LowMomentSplines}
\end{align}
and 
\eq\label{eq:FirstMomentSplines}
c_{0}(\mbf f^{( d+1)})=\mbf P_{s}^{( d)}-\mbf P^{( d)}_{0}\,,
\qe
as claimed.
\end{proof}

\ni
\begin{rem}
The family $\{\varphi_{ d+1}^{( d+1)}, \varphi_{ d+2}^{( d+1)},\ldots, \varphi_{ m}^{( d+1)}\}$ is a basis of ${\mathds R}_{ m- d-1}[X]$, so $\mbf p(\mbf f)$ is entirely determined by any projection of $\mbf f$ onto ${\mathds R}_{ m- d-1}[X]$. 
\end{rem}

\begin{rem}
Observe that $\mbf b$ describes the boundary conditions on $\mbf f$. Recall that, in our model, we assume that the experimenter knows these boundary conditions. Furthermore, Equation \eqref{eq:MatrixFormulationAssumption1} shows that the noiseless moments appearing in \eqref{def:BlassoSpline} can be determined by the boundary conditions $\mbf b$.
\end{rem}

\subsection{Observation of a random perturbation}

\begin{Assumption}[Approximate projection of non-uniform splines]\label{eq:AssumptionSplines}
We say that a random polynomial $P$ with values in ${\mathds R}_{ m- d-1}[X]$ satisfies {\bf Assumption \ref{eq:AssumptionSplines}} if
\eq\notag
\Theta(P)\sim\mathcal N(\mbf p(\mbf f),\sigma^{2}\,\mathrm{Id}_{ m- d})\,,
\qe
where $\Theta(P):= (\langle P, \varphi_{ d+1}^{( d+1)}\rangle,\langle P, \varphi_{ d+2}^{( d+1)}\rangle,\ldots,\langle P, \varphi_{ m}^{( d+1)}\rangle)$.
\end{Assumption}

\begin{rem}
Note that Assumption \ref{eq:AssumptionSplines} asserts that the experimenter observes a Gaussian perturbation (with known covariance matrix) of the inner-products of the non-uniform spline $\mbf f$ with the polynomial basis $\{\varphi_{ d+1}^{( d+1)}, \varphi_{ d+2}^{( d+1)},\ldots, \varphi_{ m}^{( d+1)}\}$. In particular, observe that $\lVert\varphi_{ m}^{( d+1)}\lVert_2^2=\mathcal O[(\frac{m!}{(m-d-1)!})^2]$ so that the signal-to-noise ratio (SNR) is of the order of $(\frac{m!}{\sigma(m-d-1)!})^2$. In applications, the standard assumption is that the SNR depends only on the noise variance. To match this situation, one needs to consider a noise level $\sigma:=\sigma_0\frac{m!}{(m-d-1)!}$ in order to get a SNR of the order of $1/\sigma_0^2$. For sake of readability, we do not pursue on this idea but the simulations of this paper are made accordingly.
\end{rem}


\begin{rem}
Remark that the noisy moments appearing in \eqref{def:BlassoSpline} are a Gaussian perturbation of the moments described by \eqref{eq:MatrixFormulationAssumption1}.
\end{rem}

\subsection{Algorithm and main theorem}
\ni
Let $P$ be a random vector with values in ${\mathds R}_{ m- d-1}[X]$. Set:
\eq
\label{def:BlassoSpline}
 \hat{\mbf x}\in\arg\min_{ \mu\in\mbf C_{ d}(\mbf f^{( d+1)})}\frac12\lVert \mbf c(\mu)-\mbf y\lVert^2_2+\lambda\lVert\mu\lVert_{TV}\,.
\qe
Recall that $\mbf C_{ d}(\mbf f^{( d+1)}):=\{\mu\in \mathcal M\,;\quad \forall\,k=0,\ldots, d\,,\ c_{k}(\mu)=c_{k}(\mbf f^{( d+1)})\}$, $\lambda>0$ is a tuning parameter and 
\[
\mbf y:=\left[\begin{array}{cc}0 & W_1 \\(-1)^{ d+1}\,\mathrm{Id}_{ m- d} & W_2\end{array}\right]\left(\begin{array}{c}\Theta(P) \\\mbf b\end{array}\right)\,.
\]
\begin{rem}
Note that if a discrete measure $\hat{\mbf x}$ enjoys
\eq
\label{eq:ContrainteSpline}
\forall k=0,\ldots, d,\quad
c_{k}(\hat{\mbf x})=c_{k}(\mbf f^{( d+1)})
\qe
then one can explicitly construct the unique non-uniform spline $\hat{\mbf f}$ with $( d+1)$-th derivative $\hat{\mbf x}$ and boundary conditions $\mbf b$. Indeed, observe that we can uniquely construct a non-uniform spline $\hat{\mbf f}$ from the knowledge of the $( d+1)$ boundary conditions at point $-1$ and its $( d+1)$-th derivative. Moreover, Eq.'s \eqref{eq:ContrainteSpline}, \eqref{eq:LowMomentSplines} and \eqref{eq:FirstMomentSplines} show that $\hat{\mbf f}$ satisfies the $( d+1)$ boundary conditions at point $1$ and so the boundary conditions $\mbf b$.
\end{rem}

\begin{algorithm}
 \caption{Non-uniform spline recovery algorithm}%
 \label{alg:NUSR}%
{%
\KwIn{Boundary conditions $\mbf b$, a polynomial approximation $P$, an upper bound $\sigma$ on the noise standard deviation and $\alpha>0$ a tuning parameter.}
\KwOut{A non-uniform spline $\hat{\mbf f}$.}
\begin{enumerate}
\item Set $ d=\mathrm{Size}(\mbf b)/2-1$ and $ m=\mathrm{deg}(P)+ d+1$,
\item  Compute $\Theta(P)=(\langle P, \varphi_{ d+1}^{( d+1)}\rangle,\langle P, \varphi_{ d+2}^{( d+1)}\rangle,\ldots,\langle P, \varphi_{ m}^{( d+1)}\rangle)$,
\item Compute 
$
\mbf y=\left[\begin{array}{cc}0 & W_1 \\(-1)^{ d+1}\,\mathrm{Id}_{ m- d} & W_2\end{array}\right]\left(\begin{array}{c}\Theta(P) \\\mbf b\end{array}\right)\,,
$\\
where $W_{1}$ and $W_{2}$ are described in Lemma \ref{lem:Matrix}.
\item  Set $\displaystyle
\lambda=4\sigma[2(1+\alpha)( m- d)\log(5( m+ d+1))]^{1/2}$,
\item Find a discrete solution $\displaystyle
\hat{\mbf x}=\sum_{k=1}^{\hat s}\hat{ a}_{k}\delta_{\hat{\mbf t}_{k}}$ to \eqref{def:BlassoSpline}\\ using SDP programming, see Appendix \ref{app:Back},
\item Find the unique spline $\hat{\mbf f}$ of order $ d-1$ such that $\displaystyle
\hat{\mbf f}^{( d+1)}=\hat{\mbf x}$ and $\displaystyle
(\hat{\mbf f}_{0}(-1),\ldots,\hat{\mbf f}^{( d-1)}_{0}(-1),\hat{\mbf f}^{( d)}_{0},\hat{\mbf f}_{s}(1),\ldots,\hat{\mbf f}^{( d-1)}_{s}(1),\hat{\mbf f}^{( d)}_{s})=\mbf b$.
\end{enumerate}
}%
\end{algorithm}

\begin{thm}\label{thm:Spline}
Let $ m> d\geq0$. Let $\mbf f$ be a non-uniform spline of degree $ d$ that can be written as:
\eq\notag
\mbf f=\ind_{[-1,  t_{1})}\,\mbf P_{0}+\ind_{[  t_{1},  t_{2})}\,\mbf P_{1}+\ldots+\ind_{[  t_{s-1},  t_{s})}\,\mbf P_{s-1}+\ind_{[  t_{s},1]}\,\mbf P_{s}\,,
\qe
where $\mbf P_{k}\in\mathds R_{ d}[X]$ and $\mbf T=\{-1,  t_{1},  t_{2},\ldots,  t_{s},1\}$ enjoys:
\eq\notag
\min\{\Delta (\mbf T),\,2\epsilon(\mbf T)\} \geq \frac{5\pi}{ m}.
\qe
Set $\mbf b=(\mbf P_{0}(-1),\ldots,\mbf P^{( d-1)}_{0}(-1),\mbf P^{( d)}_{0},\mbf P_{s}(1),\ldots,\mbf P^{( d-1)}_{s}(1),\mbf P^{( d)}_{s})$ and let $P$ be such that {\bf Assumption 1} holds. Let $\alpha>0$ then, with probability greater than $1-\big[\frac1{5( m+ d)}\big]^{\alpha}$, any output $\hat{\mbf f}$ of Algorithm \ref{alg:NUSR} enjoys:
\begin{enumerate}
\item Global control:
\[
\displaystyle\sum_{k=1}^{\hat s}|\hat{\mbf P}_{k}^{( d)}-\hat{\mbf P}^{( d)}_{k-1}|\min\Big\{ m^{2}\min_{ t\in\mbf T}d( t, {\hat{ t}_{k}})^{2};c_{0}^{2}\Big\}\leq{c_{1}}\lambda\,,
\]
\item Large discontinuity localization: $\forall i=1,\ldots,s,\ \mathrm{s.t.}\ |\mbf P_{i}^{( d)}-\mbf P^{( d)}_{i-1}|>c_{2}\lambda$,
\[
\exists\,\hat{t}\in\{\hat{t}_{1},\ldots,\hat{t}_{\hat s}\}\ \mathrm{s.t.}\quad \displaystyle d({t}_{i},\hat{t})\leq \left[\frac{c_{1}\lambda}{|\mbf P_{i}^{( d)}-\mbf P^{( d)}_{i-1}|-c_{2}\lambda}\right]^{1/2}\frac1{ m}\,,
\]
\end{enumerate}
where  $c_{0}=1.0361$, $c_{1}=235.85$, $c_{2}=220.72$, $\lambda=4\sigma[2(1+\alpha)( m- d)\log(5( m+ d+1))]^{1/2}$ and $\hat{\mbf f}$ is written as:
\[
\notag
\hat{\mbf f}=\ind_{[-1,\hat{  t}_{1})}\,\hat{\mbf P}_{0}+\ind_{[\hat{  t}_{1},\hat{  t}_{2})}\,\hat{\mbf P}_{1}+\ldots+\ind_{[\hat{  t}_{\hat s-1},\hat{  t}_{\hat s})}\,\mbf P_{s-1}+\ind_{[\hat{  t}_{\hat s},1]}\,\mbf P_{\hat s}\,,
\]
with $\hat{\mbf P}_{k}\in\mathds R_{ d}[X]$.
\end{thm}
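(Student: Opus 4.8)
The plan is to recognize Theorem \ref{thm:Spline} as a corollary of Theorem \ref{thm:Main} for the specific target measure $\mbf x = \mbf f^{(d+1)}$. First I would record, as in Section 3.1, that $\mbf f^{(d+1)} = \sum_{k=1}^{s}(\mbf P_k^{(d)} - \mbf P_{k-1}^{(d)})\,\delta_{t_k}$ is a discrete signed measure whose atoms are the interior knots $t_1,\dots,t_s$ with weights $a_k = \mbf P_k^{(d)} - \mbf P_{k-1}^{(d)}$, i.e. the jumps of the $d$-th derivative of $\mbf f$. With this choice the constraint set $\mbf C_d(\mbf f^{(d+1)})$ of the program \eqref{def:BlassoSpline} is exactly the set $\mbf C_d(\mbf x)$ of \eqref{def:Blasso}, and the separation hypothesis on $\mbf T=\{-1,t_1,\dots,t_s,1\}$ is verbatim \eqref{hyp:separation}. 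So the whole task reduces to checking that the data $\mbf y$ assembled in Algorithm \ref{alg:NUSR} follows the Gaussian observation model \eqref{eq:defObservation} for this $\mbf x$; once that is done, \eqref{def:BlassoSpline} is literally an instance of \eqref{def:Blasso} and the three conclusions of Theorem \ref{thm:Main} transfer, the endpoints $\pm1$ being carried as boundary nodes through the $d+1$ noiseless moment constraints exactly as in the symmetrized certificates of Lemmas \ref{lem:DualCertificate} and \ref{lem:QIC}.

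The crux is the noise computation. Writing $\mbf n := \Theta(P) - \mbf p(\mbf f)$, Assumption \ref{eq:AssumptionSplines} gives $\mbf n \sim \mathcal N(0,\sigma^2\,\mathrm{Id}_{m-d})$. Subtracting the identity \eqref{eq:MatrixFormulationAssumption1} of Lemma \ref{lem:Matrix} from the definition of $\mbf y$, the common boundary vector $\mbf b$ cancels and only the perturbation survives:
\[
\mbf y - \mbf c(\mbf f^{(d+1)})
= \left[\begin{array}{cc}0 & W_1 \\ (-1)^{d+1}\,\mathrm{Id}_{m-d} & W_2\end{array}\right]
\left(\begin{array}{c}\mbf n \\ 0\end{array}\right)
= \left(\begin{array}{c}0 \\ (-1)^{d+1}\mbf n\end{array}\right).
\]
The top block $[\,0\ \ W_1\,]$ kills $\mbf n$, which is the analytic counterpart of the fact, visible in \eqref{eq:LowMomentSplines}--\eqref{eq:FirstMomentSplines}, that the first $d+1$ moments of $\mbf f^{(d+1)}$ depend only on $\mbf b$ and are hence observed without error; the bottom block transmits $\mbf n$ through the identity up to a sign. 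Since $(-1)^{d+1}\mbf n$ has the same law as $\mbf n$, setting $\mbf e := (0,(-1)^{d+1}\mbf n)$ yields $\mbf y = \mbf c(\mbf x) + \mbf e$ with $\mbf e=(0,\tilde{\mbf n})$ and $\tilde{\mbf n}\sim\mathcal N(0,\sigma^2\,\mathrm{Id}_{m-d})$, which is precisely \eqref{eq:defObservation}.

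It then remains to match parameters and translate. Taking $\eta=\alpha$, the tuning parameter $\lambda = 4\sigma[2(1+\alpha)(m-d)\log(5(m+d+1))]^{1/2}$ of step 4 equals $2\lambda_0$ and thus obeys $\lambda\ge\lambda_0$, so Theorem \ref{thm:Main} holds on an event of probability at least $1-[1/(5(m+d))]^{\alpha}$. I would then feed the correspondence $a_i = \mbf P_i^{(d)} - \mbf P_{i-1}^{(d)}$ and, for the output, $\hat a_k = \hat{\mbf P}_k^{(d)} - \hat{\mbf P}_{k-1}^{(d)}$ into conclusions (i) and (iii) of Theorem \ref{thm:Main}: this gives directly the global control and the large discontinuity localization of Theorem \ref{thm:Spline}. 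The passage from the recovered measure $\hat{\mbf x}=\sum_k\hat a_k\delta_{\hat t_k}$ to the spline $\hat{\mbf f}$ is supplied by the remark following \eqref{def:BlassoSpline}: since any discrete minimizer is feasible, $c_k(\hat{\mbf x})=c_k(\mbf f^{(d+1)})$ for $k\le d$, so step 6 produces the unique spline with $\hat{\mbf f}^{(d+1)}=\hat{\mbf x}$ and boundary data $\mbf b$, whose $d$-th derivative jumps by $\hat a_k$ at $\hat t_k$. I expect the only genuinely delicate point to be this reduction itself — verifying that the linear map of Lemma \ref{lem:Matrix} sends the Gaussian perturbation isometrically (up to sign) into the high-order moments while leaving the boundary-driven low-order moments noiseless; everything after that is a substitution and carries no new estimate.
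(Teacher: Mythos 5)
Your proposal is correct and follows essentially the same route as the paper: the paper's proof consists precisely of using Lemma \ref{lem:Matrix} and Assumption \ref{eq:AssumptionSplines} to write $\mbf y=\mbf c(\mbf f^{(d+1)})+(-1)^{d+1}(0,\mbf n)$, thereby placing \eqref{def:BlassoSpline} in the observation model \eqref{eq:defObservation} for $\mbf x=\mbf f^{(d+1)}$, and then invoking Theorem \ref{thm:Main}. Your additional details (the cancellation of $\mbf b$, the sign-invariance of the Gaussian law, the identification $\lambda=2\lambda_0$ with $\eta=\alpha$, and the translation of weights into jumps of the $d$-th derivative) are exactly the steps the paper leaves implicit.
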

\begin{proof}
From \eqref{eq:MatrixFormulationAssumption1} deduce that if $P$ satisfies {\bf Assumption 1} then:
\eq
\notag
\mbf y:=\left[\begin{array}{cc}0 & W_1 \\(-1)^{ d+1}\,\mathrm{Id}_{ m- d} & W_2\end{array}\right]\left(\begin{array}{c}\Theta(P) \\\mbf b\end{array}\right)=\mbf c(\mbf f^{( d+1)})+(-1)^{ d+1}\,\left(\begin{array}{c}0 \\\mbf n\end{array}\right)\,,
\qe
where $W_{1}$ and $W_{2}$ are described in Lemma \ref{lem:Matrix}. Observe the result follows from Theorem \ref{thm:Main}.
\end{proof}

\section{Numerical experiments}

The semidefinite formulation of our procedure follows from standard arguments in super-resolution theory, see Appendix \ref{app:Fenchel} and Appendix \ref{app:Back}.

We have run several numerical experiments and we have observed the following behaviour. In most cases, our approach succeeds in localizing the knots of the original spline and the amplitudes of its discontinuties while some small discontinuities may appear in the reconstructed spline. 

Observe that, as can be seen in the second example of Figure \ref{fig:Example2}, a small error in the estimation of the amplitude of a  discontinuity may have a large impact on the reconstructed spline. More precisely, the $\ell_{\infty}$-distance between the orginial and reconstructed splines can be large. However, large discontinuities are well estimated (as proven in Theorem \ref{thm:Spline}) so that the overall profile of the original spline is well depicted by the reconstructed spline.

Finally \ref{fig:Example3} and \ref{fig:Example4} show on an example the behaviour of our algorithm when increasing the noise level $\sigma$, and with degrees $d$ higher than $1$.

\begin{figure}
\begin{center}
\includegraphics[width=0.49\textwidth]{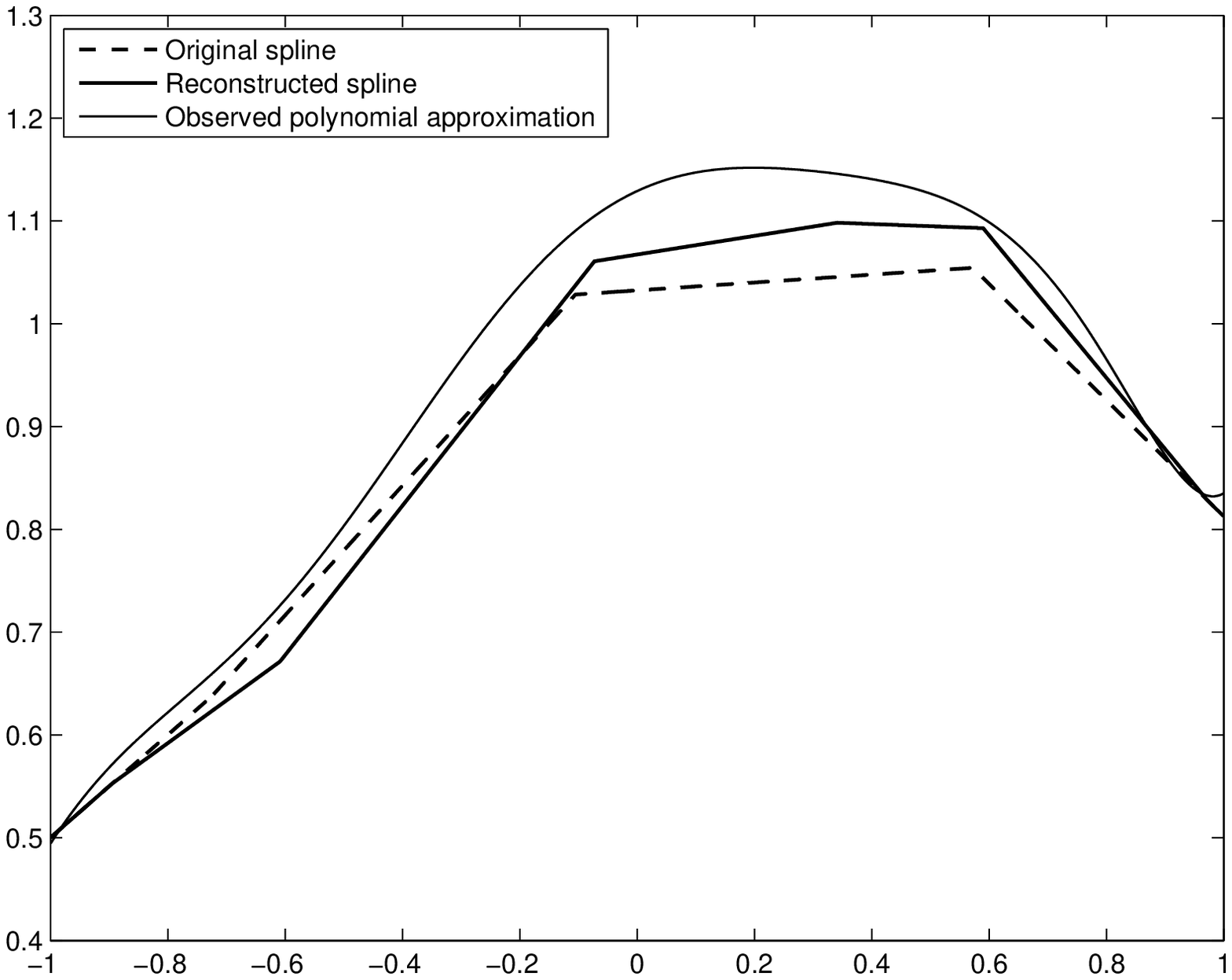}\includegraphics[width=0.49\textwidth]{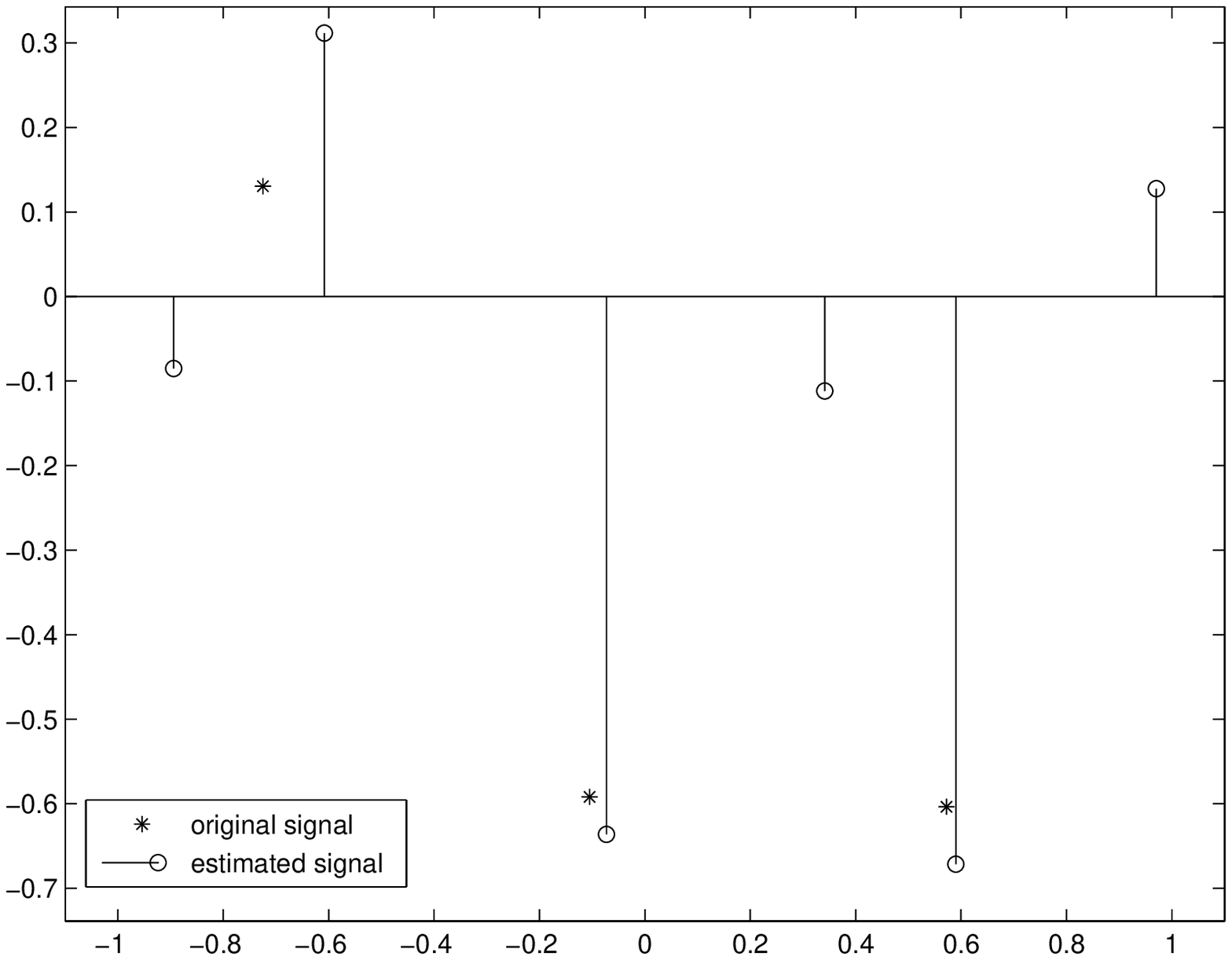}\\
\includegraphics[width=0.49\textwidth]{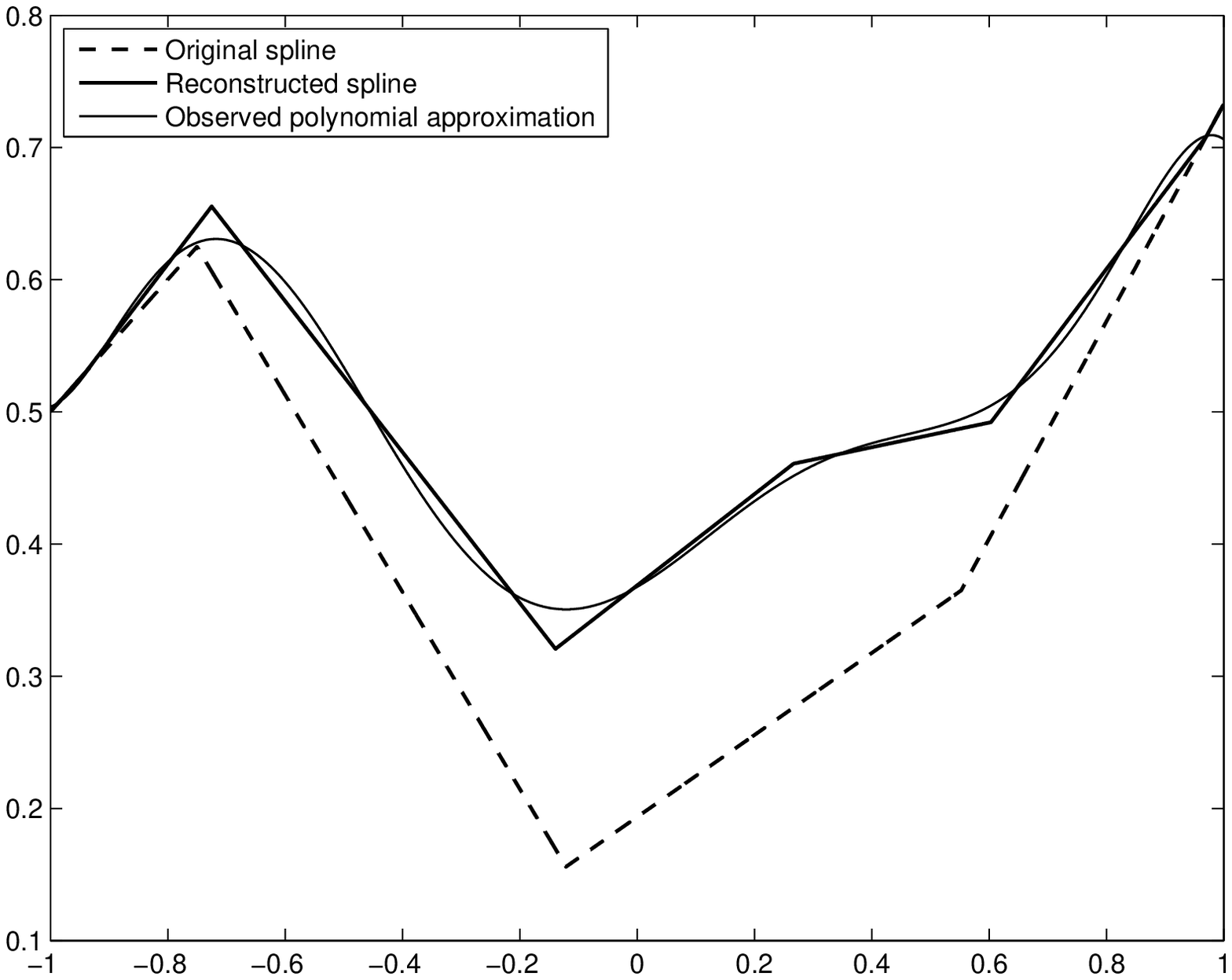}\includegraphics[width=0.49\textwidth]{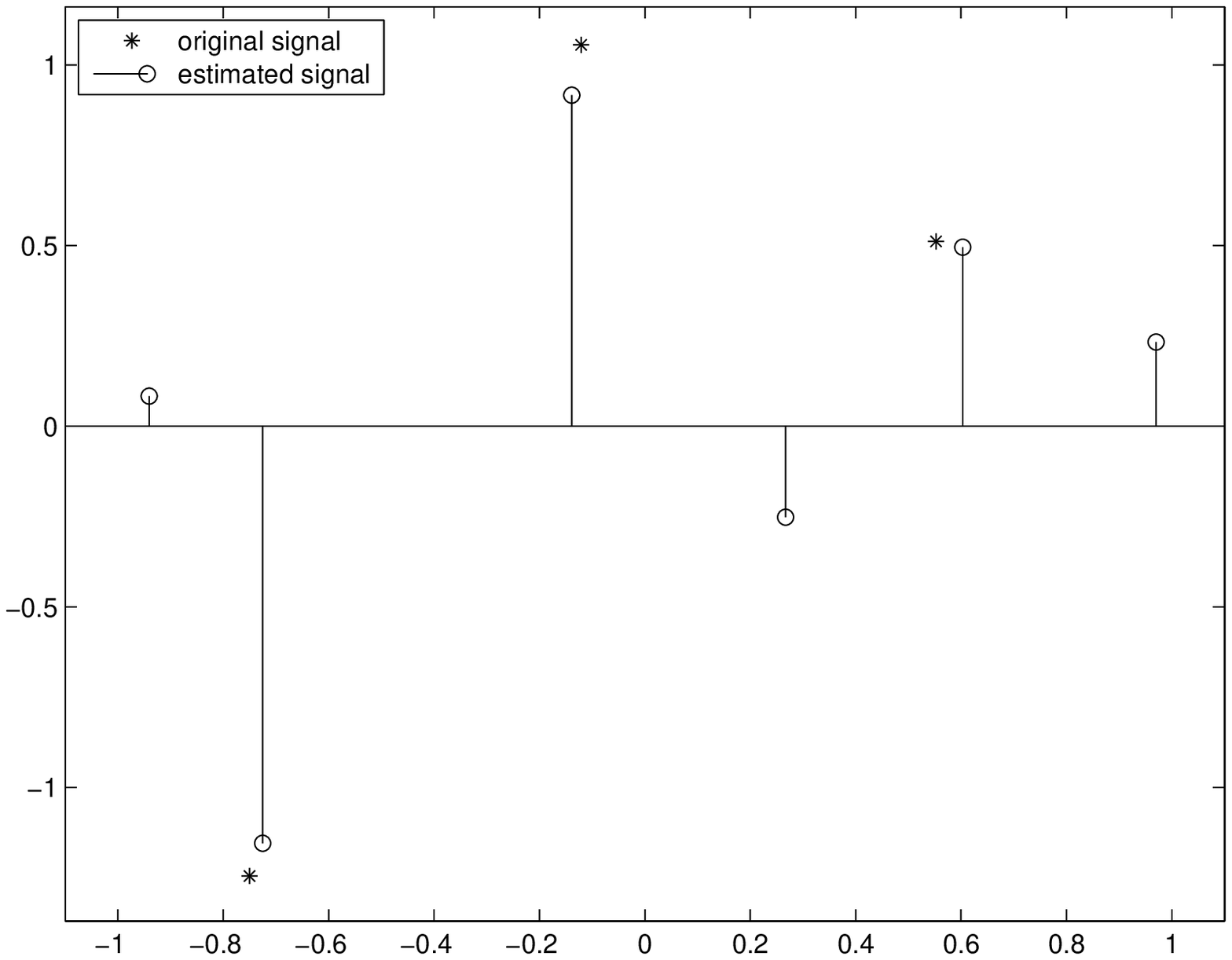}
\end{center}
\caption{Left : estimated spline (thick black line) of a non-uniform spline $\mbf f$ (thick dashed gray line) and its knots from a polynomial approximation (thin black line). Right : $d+1$-derivative of the spline (stars: original spline; circles: reconstructed spline).}
\label{fig:Example2}
\end{figure}

\begin{figure}
\begin{center}
\includegraphics[width=0.33\textwidth]{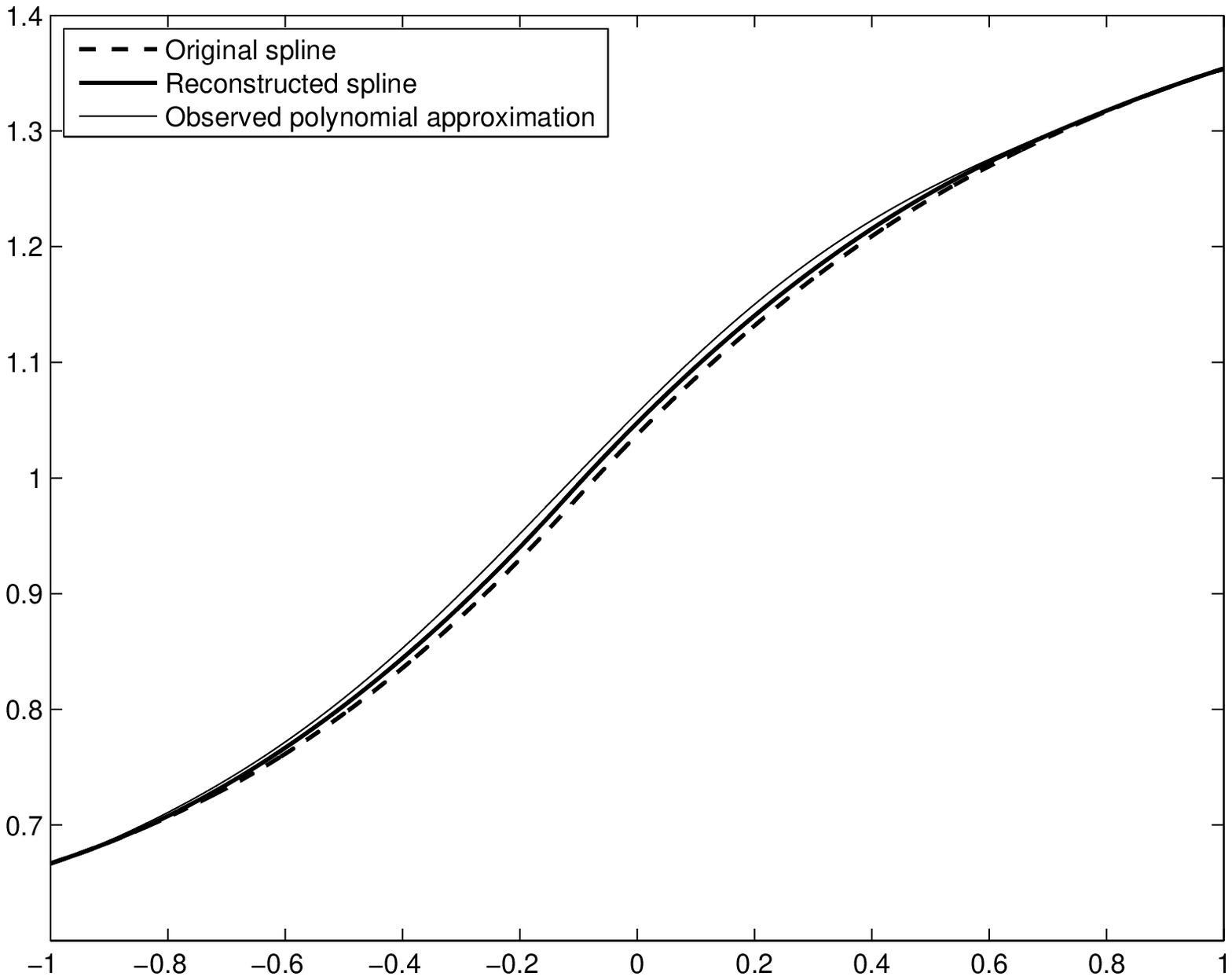}\includegraphics[width=0.33\textwidth]{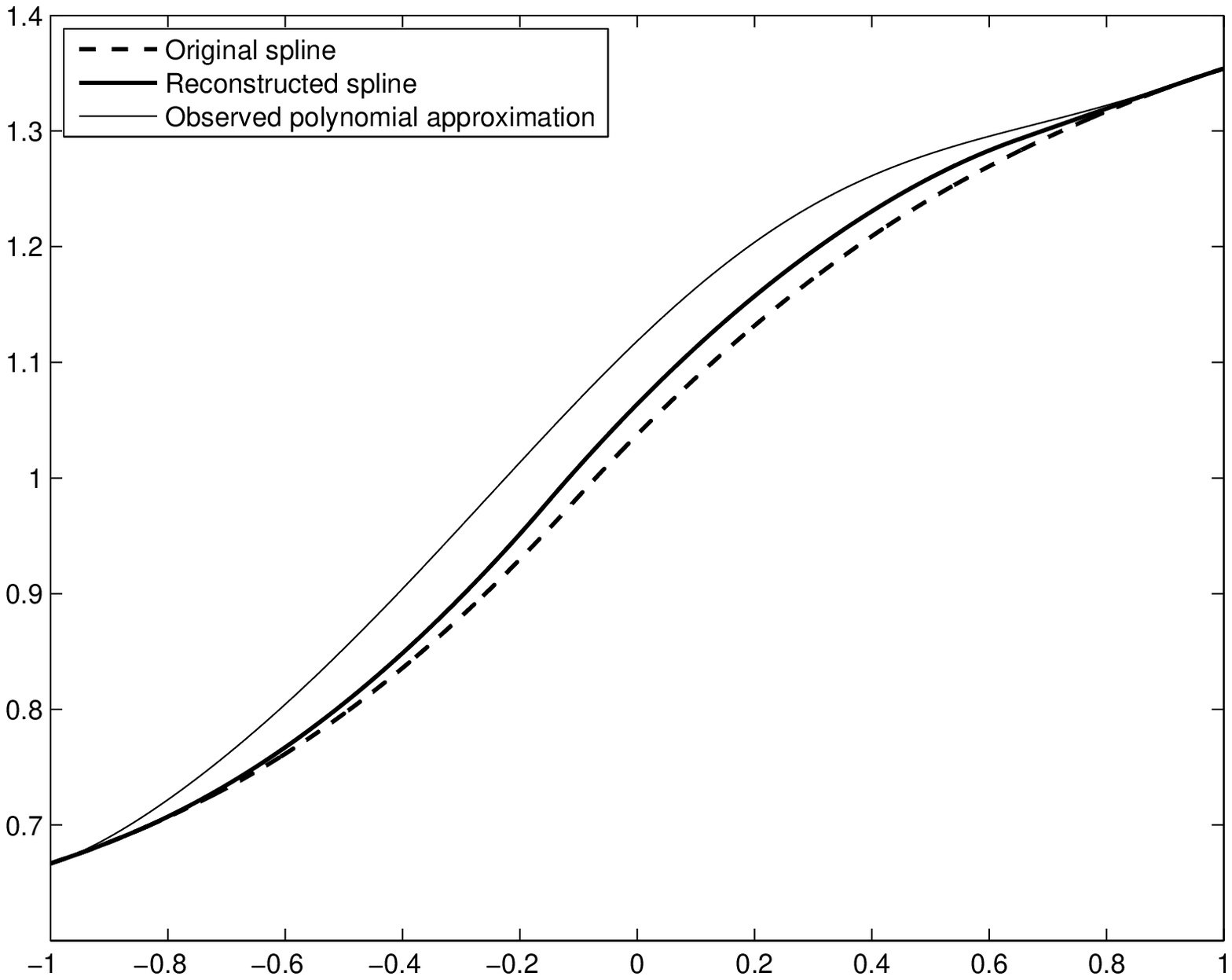}\includegraphics[width=0.33\textwidth]{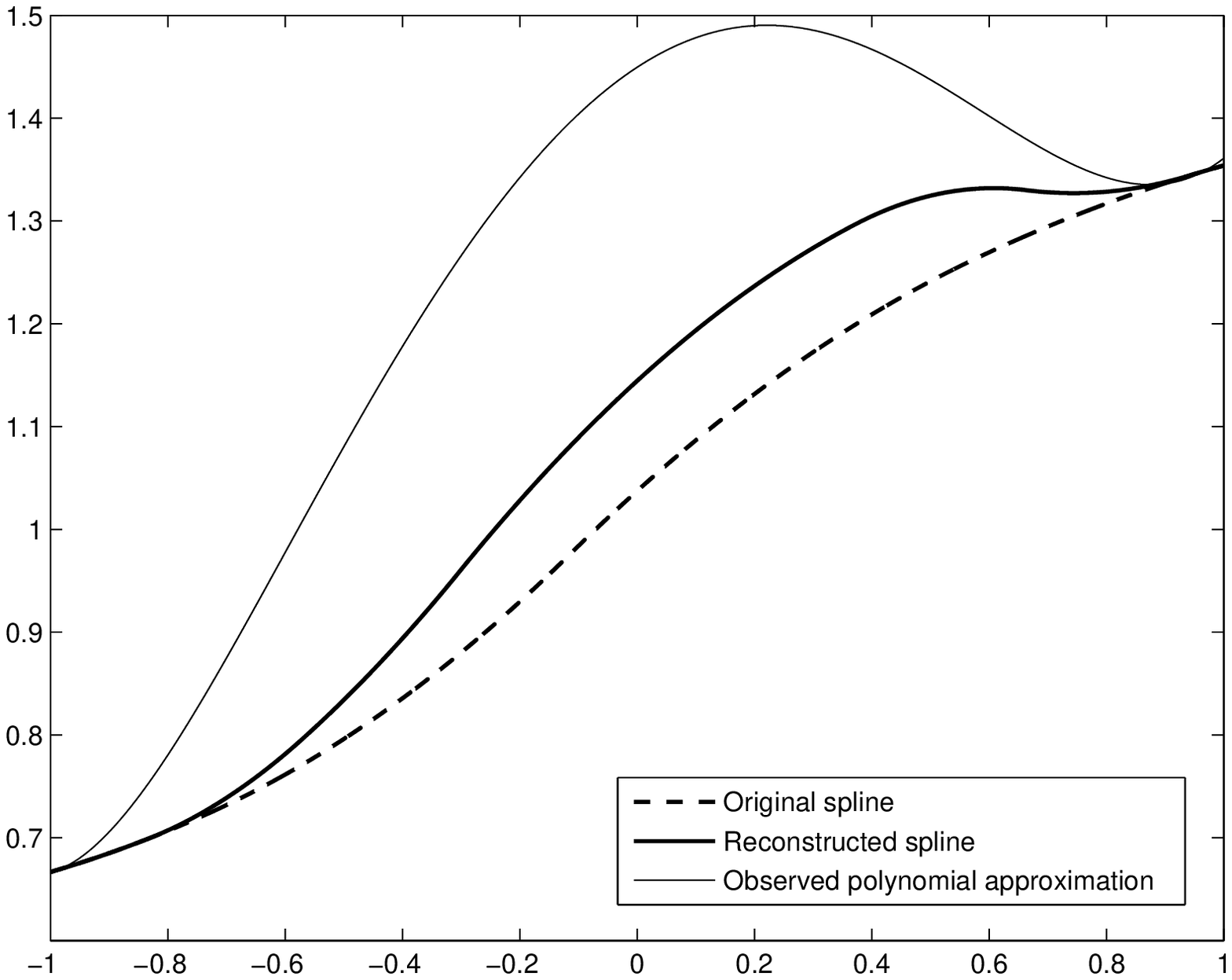}\\
\includegraphics[width=0.33\textwidth]{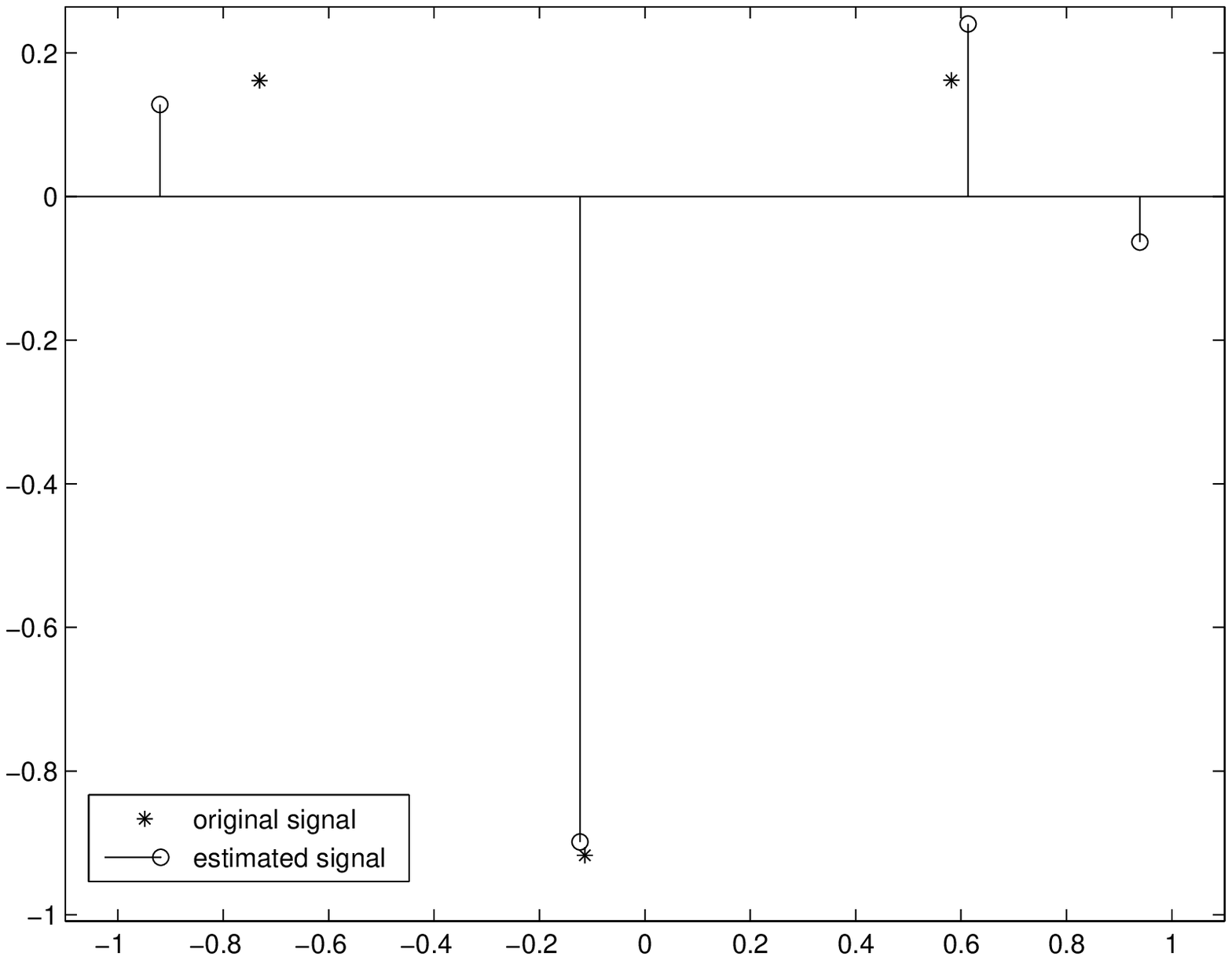}\includegraphics[width=0.33\textwidth]{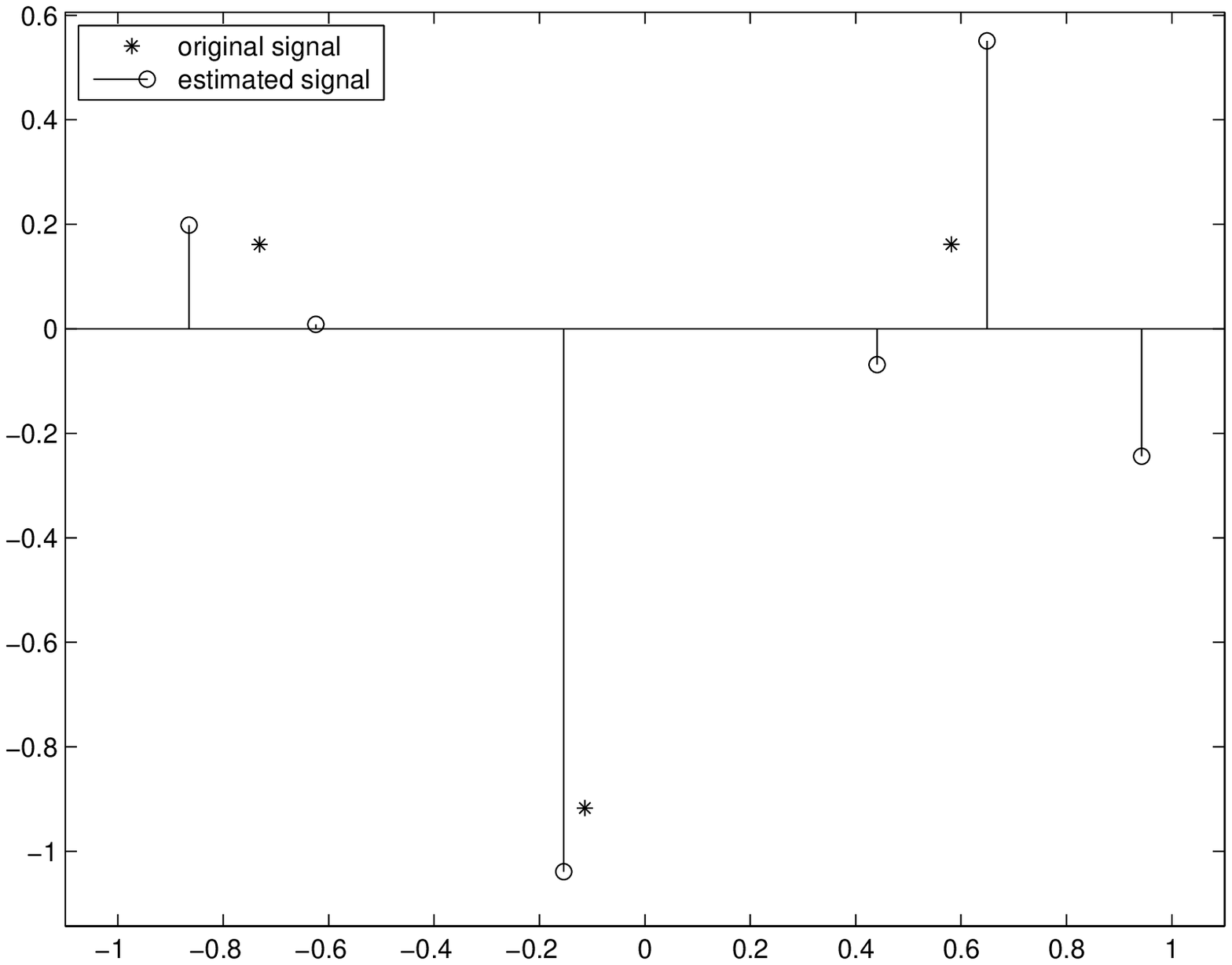}\includegraphics[width=0.33\textwidth]{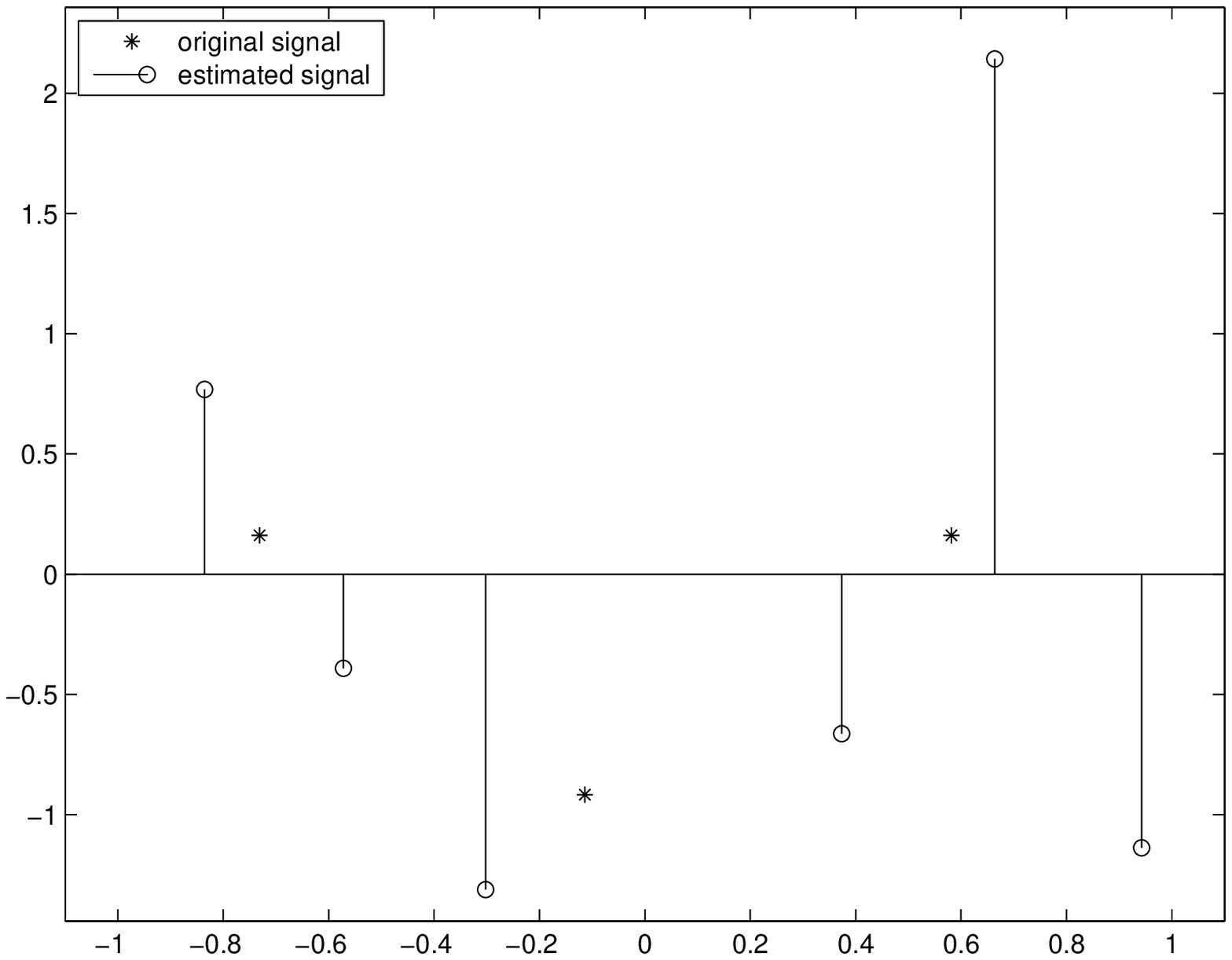}
\end{center}
\caption{Top : estimated spline (thick black line) of a non-uniform spline $\mbf f$ (thick dashed gray line) and its knots from a polynomial approximation (thin black line). Bottom : corresponding $d+1$-derivative of the spline (stars: original spline; circles: reconstructed spline). Degree $d = 2$, number of observed noisy moments $m - d= 8$. Noise levels $\sigma = \sigma_0 \frac{m!}{(m-d-1)!}$ with $\sigma_0 \equiv 0.0005, 0.002, 0.01$ (from left to right).}
\label{fig:Example3}
\end{figure}

\begin{figure}
\begin{center}
\includegraphics[width=0.33\textwidth]{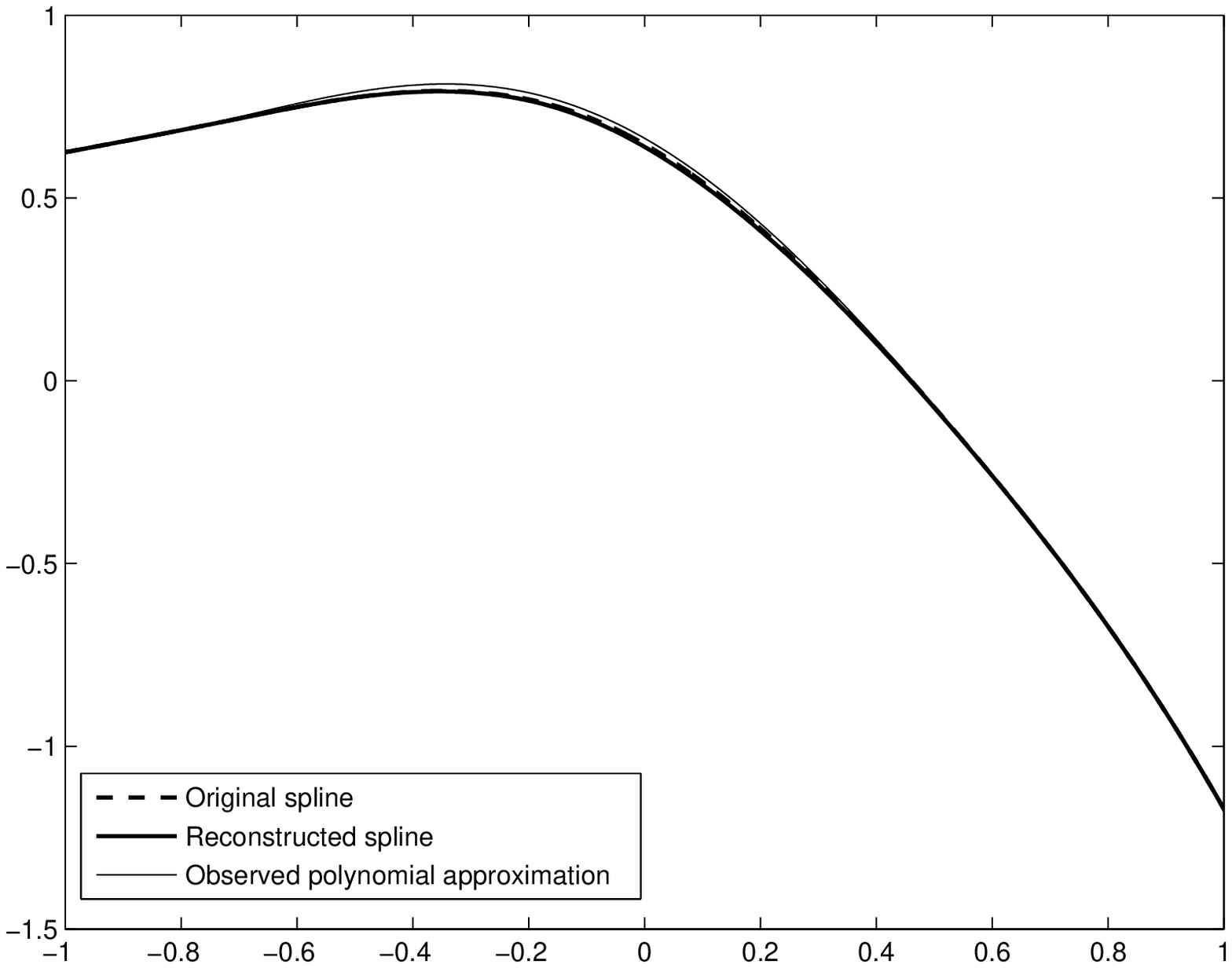}\includegraphics[width=0.33\textwidth]{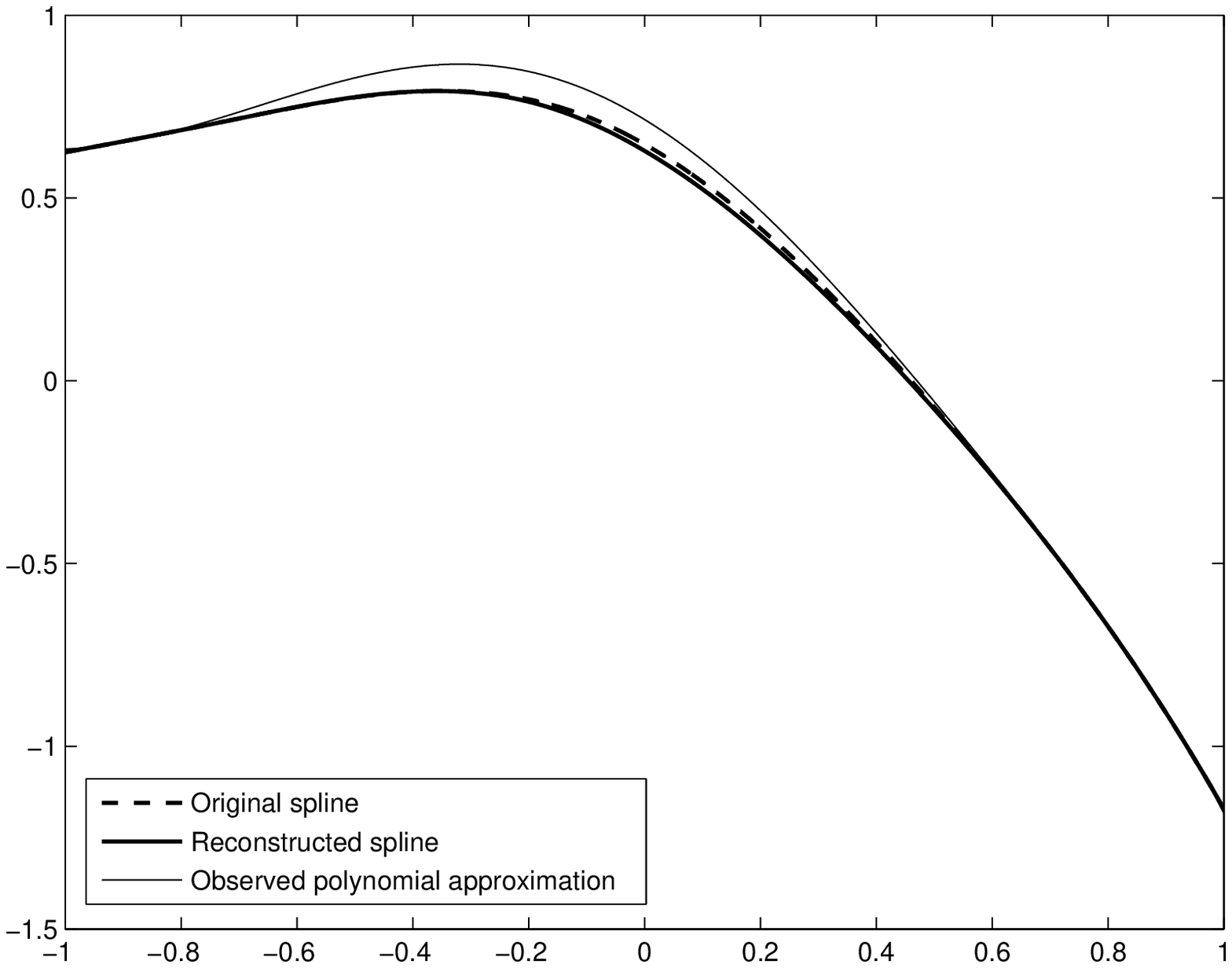}\includegraphics[width=0.33\textwidth]{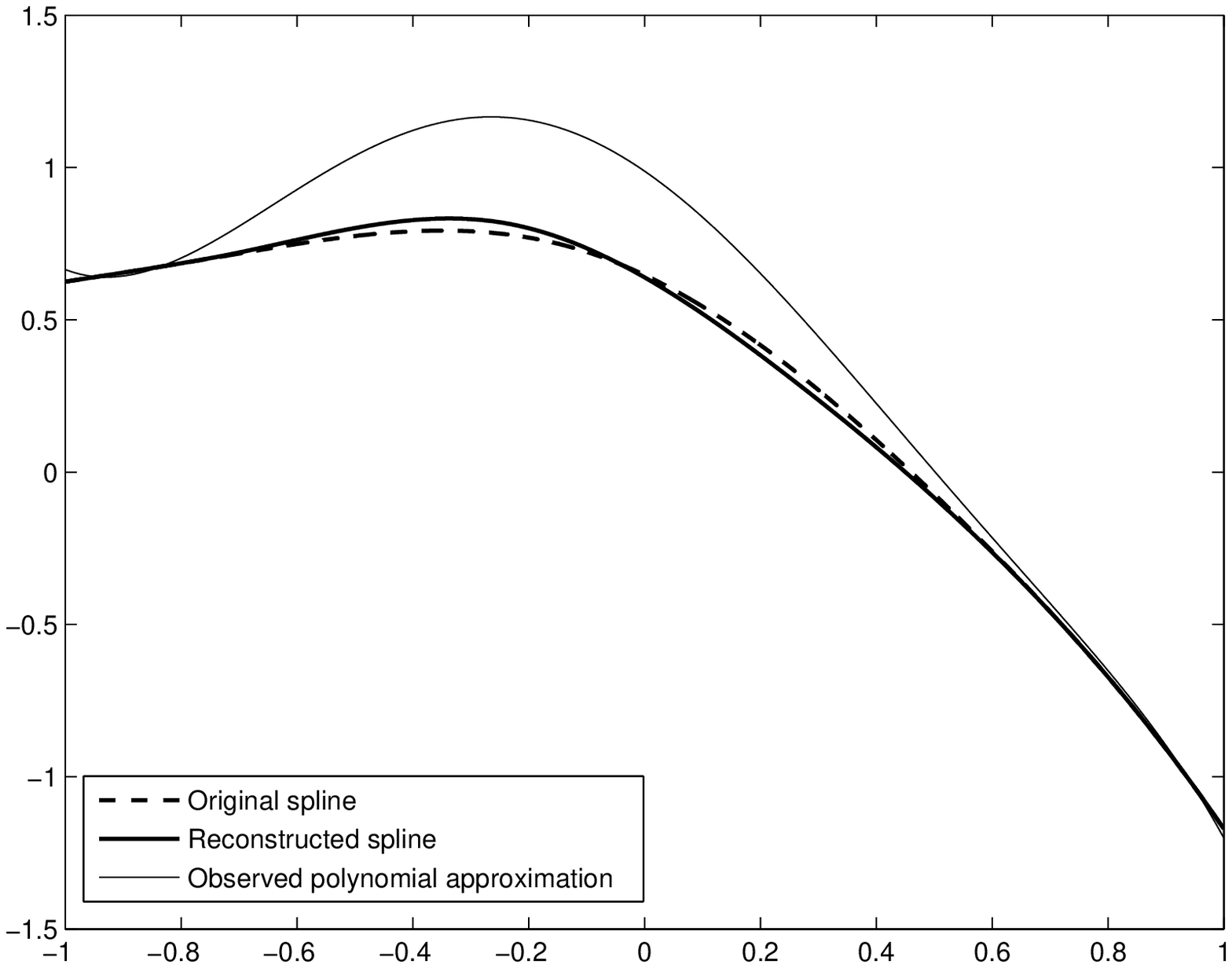}\\
\includegraphics[width=0.33\textwidth]{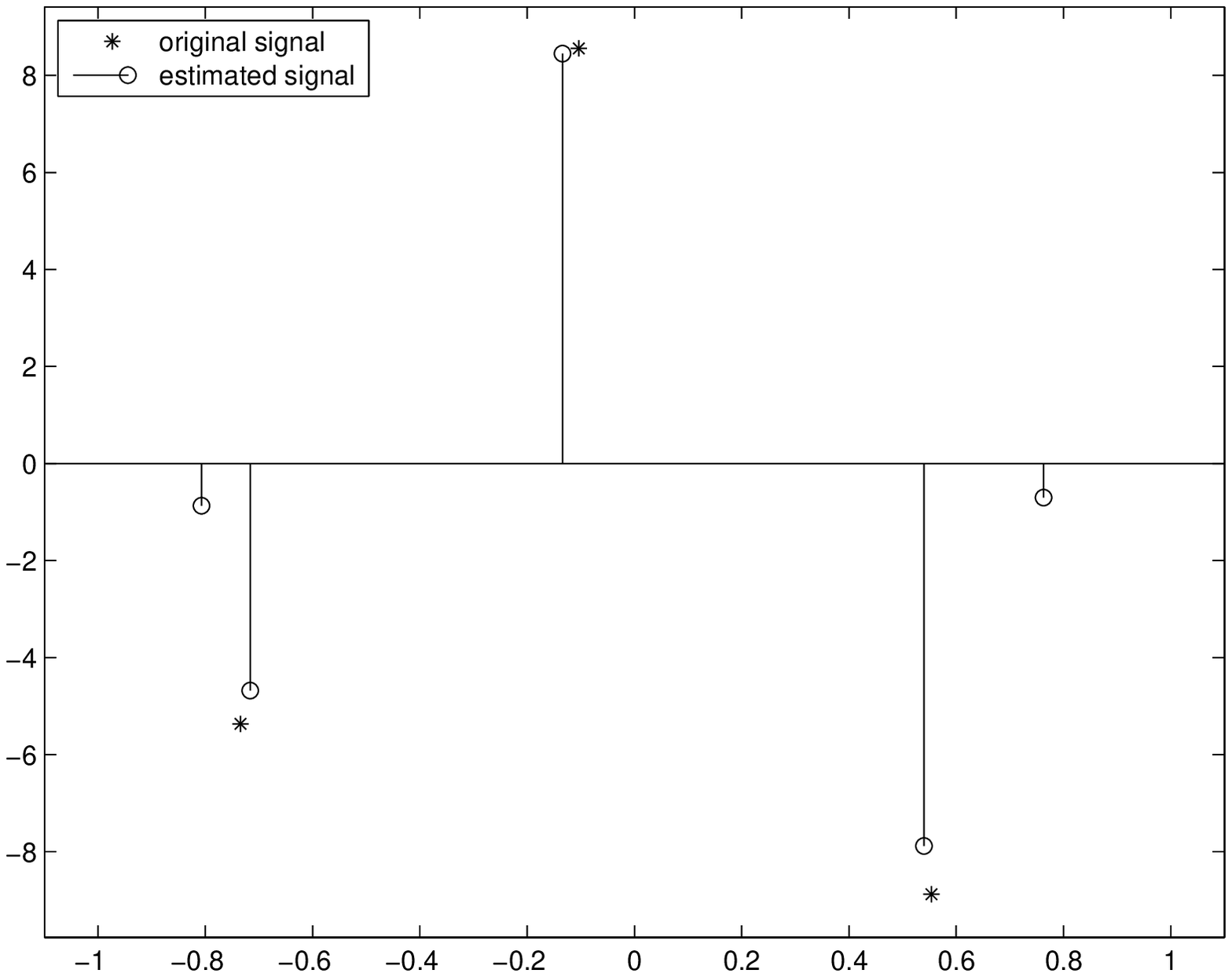}\includegraphics[width=0.33\textwidth]{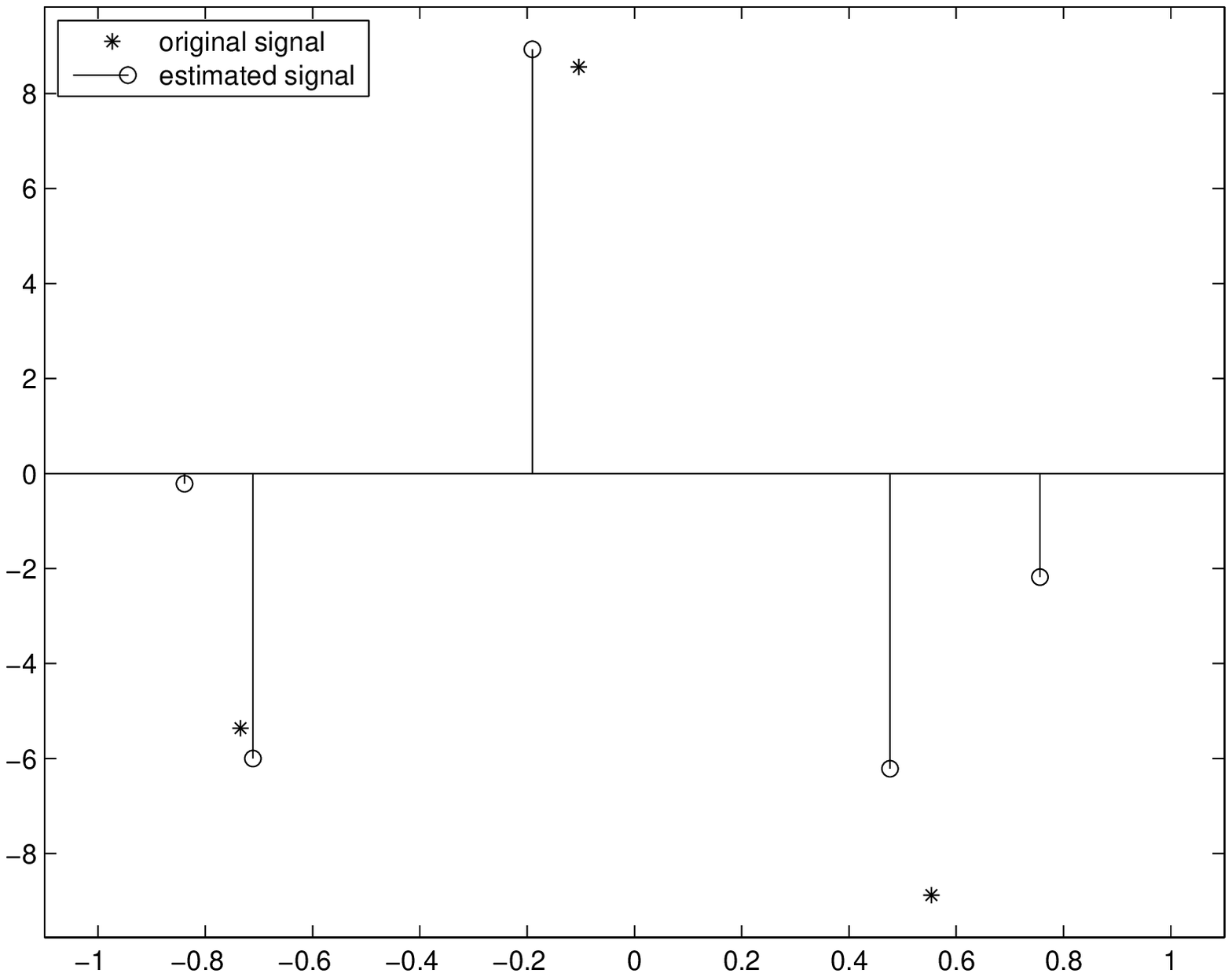}\includegraphics[width=0.33\textwidth]{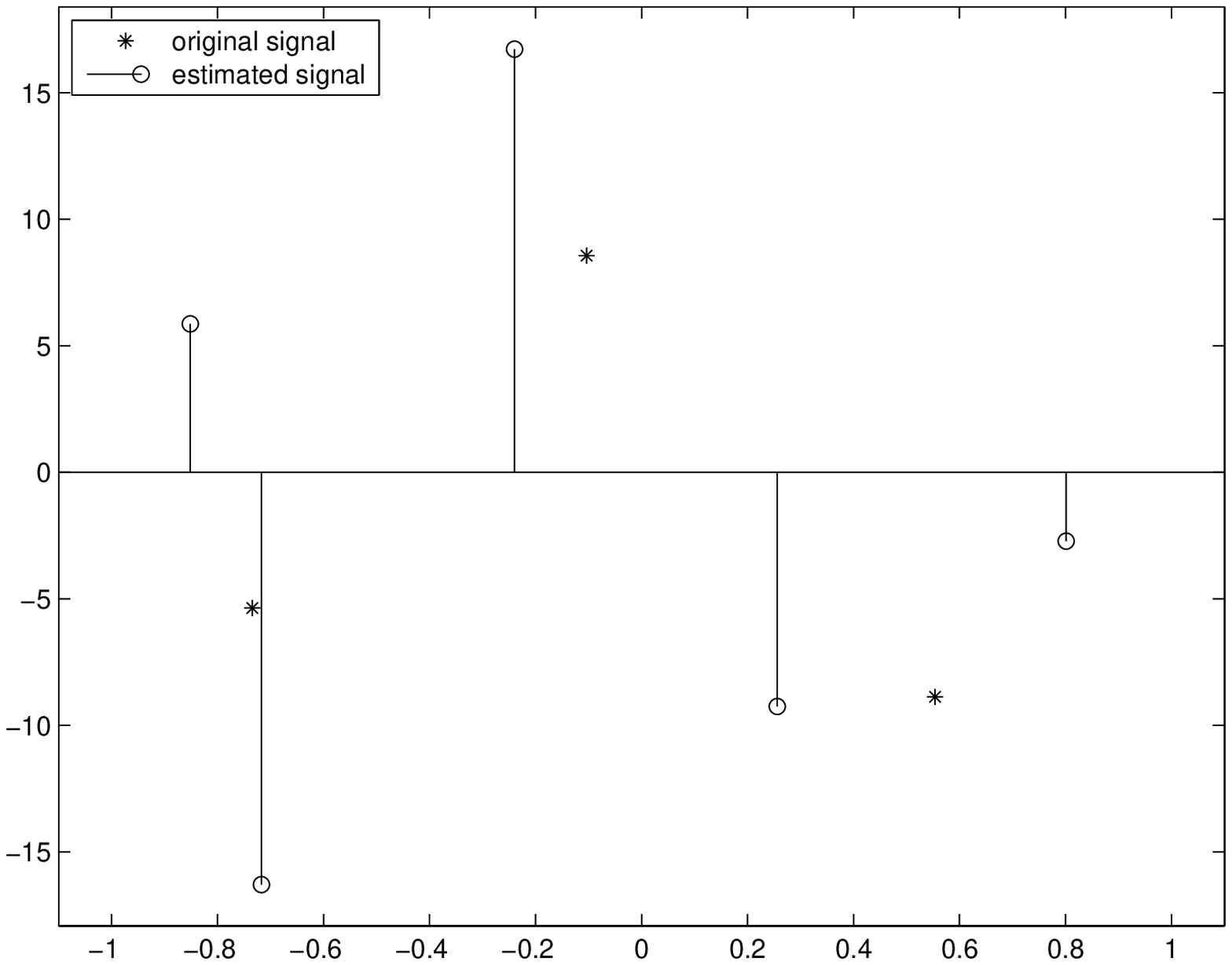}
\end{center}
\caption{Top : estimated spline (thick black line) of a non-uniform spline $\mbf f$ (thick dashed gray line) and its knots from a polynomial approximation (thin black line). Bottom : corresponding $d+1$-derivative of the spline (stars: original spline; circles: reconstructed spline). Degree $d = 3$, number of observed noisy moments $m - d = 7$. Noise levels $\sigma = \sigma_0 \frac{m!}{(m-d-1)!}$ with $\sigma_0 \equiv 0.0005, 0.002, 0.01$ (from left to right).}
\label{fig:Example4}
\end{figure}

%
\appendix
\renewcommand*{\thesection}{\Alph{section}}
\section{Rice method}
\label{app:Rice}
\ni
Define the Gaussian process $\{X_{ m,{ d}}(t),\ t \in[-1,1]\}$ by:
\[
  \forall t\in[-1,1],\quad X_{ m,{ d}}(t) =  \xi_{ d+1}\varphi_{ d+1}(t) + \xi_{ d+2} \varphi_{ d+2}(t) +   \ldots +  \xi_{ m} \varphi_{{ m}}(t)\,,
\]
 where $ \xi_{ d+1},  \ldots ,  \xi_{ m}$ are i.i.d. standard normal. Its covariance function is:
 \[
 r(s,t) = \varphi_{ d+1}(t)\varphi_{ d+1}(s) + \varphi_{ d+2}(t)\varphi_{ d+2}(s) + \ldots +  \varphi_{ m}(t)\varphi_{{ m}}(s)\,,
 \]
where the dependence in ${ m}$ and ${ d}$ has been omitted. Observe its maximal variance is attained at point $1$ and is given by $\sigma^2_{ m,{ d}} = 2({ m}-{ d})$, and its variance function is $\sigma^2_{ m,{ d}} (t) =  \varphi_{{ d+1}}(t)^{2} + \varphi_{{ d+2}}(t)^{2} + \ldots +  \varphi_{{ m}}(t)^{2}$.
\begin{lem}
\label{lem:Rice0}
Let $\displaystyle\mathscr X = \max_{t \in [-1,1]} | X_{ m, d}(t)|$, then:
\[
  \forall u >\sqrt{2( m- d)},\quad\mathds P\{ \mathscr X>u\}  \leq  5( m+ d+1)\exp\Big[-\frac{u^{2}}{8( m- d)}\Big]\,.
  \]
\end{lem}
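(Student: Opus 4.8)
The natural approach is the Rice method for the maximum of a smooth Gaussian process. First I would undo the Chebyshev disguise through $\theta=\arccos t$, which turns $X_{ m, d}$ into the random trigonometric polynomial
\[
Y(\theta):=X_{ m, d}(\cos\theta)=\sqrt2\sum_{k= d+1}^{ m}\xi_k\cos(k\theta),\qquad\theta\in[0,\pi];
\]
this is an almost surely $C^\infty$ centred Gaussian process with $\mathscr X=\max_{\theta\in[0,\pi]}|Y(\theta)|$. Write $v(\theta):=\Var(Y(\theta))$; by the computation recalled before the statement, $v$ attains its maximum $\sigma^2_{ m, d}=2( m- d)$ at $\theta=0$ (i.e.\ $t=1$), so $\sigma_{ m, d}=\sqrt{2( m- d)}$. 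Using the sign symmetry $Y\overset{d}{=}-Y$, continuity, and Markov's inequality on crossing counts, the event $\{\mathscr X>u\}$ is contained in $\{|Y(0)|>u\}$ together with the existence of an up-crossing of $u$ or a down-crossing of $-u$ on $(0,\pi)$; since these two crossing counts have equal expectation,
\[
\P\{\mathscr X>u\}\le\P\{|Y(0)|>u\}+2\,\E[U_u],
\]
where $U_u$ is the number of up-crossings of the level $u$ by $Y$ on $(0,\pi)$. The endpoint term is harmless, since $Y(0)\sim\mathcal N(0,2( m- d))$ gives $\P\{|Y(0)|>u\}\le\exp(-u^2/(4( m- d)))\le\exp(-u^2/(8( m- d)))$.

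The heart of the matter is the Rice formula
\[
\E[U_u]=\int_0^{\pi}\E\big[(Y'(\theta))^{+}\,\big|\,Y(\theta)=u\big]\,p_{Y(\theta)}(u)\,\d\theta,
\]
with $p_{Y(\theta)}$ the $\mathcal N(0,v(\theta))$ density. Setting $\lambda_2(\theta):=\Var(Y'(\theta))$ and $c(\theta):=\mathrm{Cov}(Y(\theta),Y'(\theta))$, the conditional law of $Y'(\theta)$ given $Y(\theta)=u$ is Gaussian with mean $\mu(\theta)=c(\theta)u/v(\theta)$ and variance $\tau^2(\theta)\le\lambda_2(\theta)$, whence $\E[(Y')^{+}\mid Y=u]\le\tau(\theta)/\sqrt{2\pi}+|\mu(\theta)|$. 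I would control the two resulting pieces with the single uniform spectral estimate
\[
\lambda_2(\theta)=2\sum_{k= d+1}^{ m}k^2\sin^2(k\theta)\le 2 m^2( m- d)= m^2\sigma^2_{ m, d},
\]
i.e.\ $\sqrt{\lambda_2(\theta)}\le  m\,\sigma_{ m, d}$, together with $|c(\theta)|\le\sqrt{\lambda_2(\theta)\,v(\theta)}$ (Cauchy--Schwarz).

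The \emph{diffusion} piece is $\le \dfrac{\sqrt{\lambda_2}}{2\pi\sqrt{v}}e^{-u^2/(2v)}\le\dfrac{ m\,\sigma_{ m, d}}{2\pi}\cdot\dfrac{1}{\sqrt v}e^{-u^2/(2v)}$; because $u>\sqrt{2( m- d)}=\sigma_{ m, d}$, the map $\sigma\mapsto\sigma^{-1}e^{-u^2/(2\sigma^2)}$ is increasing on $(0,\sigma_{ m, d}]$ and hence maximised at $\sigma_{ m, d}$, so this piece is dominated by $\tfrac{ m}{2\pi}e^{-u^2/(4( m- d))}$. The \emph{drift} piece is $\le\dfrac{ m\,\sigma_{ m, d}}{\sqrt{2\pi}}\cdot\dfrac{u}{v}\,e^{-u^2/(2v)}$, and here the extra factor $u$ prevents keeping the full Gaussian weight: writing $\sigma=\sqrt v$ and extracting half the exponent via $\sup_{x>0}xe^{-x^2/4}=\sqrt2\,e^{-1/2}$ (with a one-line case split according to whether the unconstrained maximiser $\sigma=u/\sqrt2$ lies in $(0,\sigma_{ m, d}]$) yields $\tfrac{u}{\sigma^2}e^{-u^2/(2\sigma^2)}\le \tfrac{2e^{-1/2}}{\sigma_{ m, d}}e^{-u^2/(8( m- d))}$, so the drift piece is dominated by a constant multiple of $ m\,e^{-u^2/(8( m- d))}$. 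This is precisely where the weaker exponent $u^2/(8( m- d))$ of the statement is born: the drift term is the binding one.

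Integrating the two bounds over $(0,\pi)$, doubling, and adding the endpoint term gives $\P\{\mathscr X>u\}\le(\text{absolute constant})\times  m\times e^{-u^2/(8( m- d))}$; a crude accounting already produces a prefactor of order $ m$ (roughly $4 m$), which sits comfortably below $5( m+ d+1)e^{-u^2/(8( m- d))}$. I expect the main obstacle to be quantitative rather than conceptual: controlling the non-stationary Rice integrand uniformly in $\theta$ despite the apparent $1/\sqrt{v}$ singularity where $v(\theta)$ is small --- tamed only because the hypothesis $u>\sqrt{2( m- d)}$ forces the relevant one-variable functions of $\sigma$ to be monotone up to $\sigma_{ m, d}$ --- and bookkeeping the drift contribution tightly enough to land on the clean prefactor $5( m+ d+1)$. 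The degenerate case $ d=-1$, where the constant mode $\varphi_0=1$ enters $Y$, only translates the process and is handled identically.
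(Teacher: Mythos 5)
Your proposal is correct and follows essentially the same route as the paper's proof: symmetrization via $\theta=\arccos t$, the Rice bound $\P\{\mathscr X>u\}\leq \P\{|Y(0)|>u\}+2\,\E[U_u]$, regression formulas splitting the crossing intensity into a drift and a diffusion part, monotonicity of $\sigma\mapsto\psi_{\sigma}(u)$ for $\sigma<u$, and the sacrifice of half the Gaussian exponent on the drift term --- which is exactly where the paper's exponent $u^{2}/(8(m-d))$ also originates. The only (harmless) deviation is bookkeeping: you control the cross-covariance by Cauchy--Schwarz together with $\Var(Y'(\theta))\leq 2m^{2}(m-d)$, yielding a prefactor of order $m$, whereas the paper bounds $r_{0,1}(\theta,\theta)$ directly by $\sum_{k=d+1}^{m}k=\tfrac{(m+d+1)(m-d)}{2}$ and then uses $u^{2}>2(m-d)$, which is where its prefactor $5(m+d+1)$ comes from.
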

\begin{proof}
By the change of variables $t=\cos\theta$, for all $t\in[-1,1]$:
\[
X_{ m,{ d}}(t) =X_{ m,{ d}}(\cos\theta)= \sqrt2\xi_{ d+1} \cos(({ d}+1)\theta) + \ldots +  \sqrt2\xi_{ m} \cos( m\theta).
\]
Set $T(\theta) :=X_{ m, d}(t)$. We recall that its variance function is given by:
\[
\sigma^{2}_{ m, d}(\theta)=2\,\cos^{2}(( d+1)\theta) +  \ldots +  2\,\cos^{2}( m\theta)= m- d+\frac{\mathbf D_{ m}(2\theta)-\mathbf D_{ d}(2\theta)}2\,,
\]
where $\mathbf D_{k}$ denotes the Dirichlet kernel of order $k$. Observe that:
 \[
 \forall \theta\in\R\,,\quad\sigma^{2}_{ m, d}(\theta)\leq \sigma^{2}_{ m, d}(0)=2( m- d)\,,
 \]
By the Rice method \cite{Aza20081190}, for $u>0$:
 \begin{align*}
 \P\{\mathscr X>u\}& \leq 2  \P\{   \max_{\theta \in [0,\pi]} T(\theta)  >u\} \,,\\
& \leq
 2  \P\{T(0)>u\} +2\, \E[ U_u([0,\pi])]\,,\\
 &=   2\left[1-\Psi\Big(\frac{u}{\sqrt{2( m- d)}}\Big)\right]+  2 \int_{0}^\pi \E\big( (T'(\theta))^+ \big| T(\theta) =u) \psi_{\sigma_{ m, d}(\theta)} (u)\d \theta
 \end{align*}
 where $U_u$ is the number of crossings of the level $u$, $\Psi $ is the c.d.f. of the standard normal distribution, and $\psi_{\sigma}$ is the density of the centered normal distribution with standard error $\sigma$. First, observe that for $v>0$, $(1-{\Psi}(v) )\leq \frac{1}{2}\exp(-v^2/2) $. Hence, 
 \[
 1-\Psi\Big(\frac{u}{\sqrt{2( m- d)}}\Big)\leq \frac{1}{2} \exp\big(-\frac{u^{2}}{4( m- d)}\big)\,.
 \]
 Moreover, regression formulas implies that:
 \begin{align*}
  \E\big( T'(\theta)\big| T(\theta) =u\big) & =\frac{ r_{0,1} (\theta,\theta)}{r(\theta,\theta)} u\,, \\
  \Var\big( T'(\theta)\big| T(\theta) =u\big) & \leq   \Var\big( T'(\theta)\big) =  r_{1,1} (\theta,\theta)\,,
  \end{align*}
  where, for instance,  $r_{1,1} (\nu,\theta) =\frac{ \partial^2 r(\nu,\theta)}{\partial \nu \partial \theta}$. We recall that the covariance function is given by:
  \begin{align*}
   r(\nu,\theta)&=2\cos(( d+1)\nu)\cos(( d+1)\theta) +  \ldots +  2\cos( m\nu)\cos( m\theta)\,, \\
   & = \frac 12 \big[\mathbf D_{ m}(\nu-\theta)+\mathbf D_{ m}(\nu+\theta)-\mathbf D_{ d}(\nu-\theta)-\mathbf D_{ d}(\nu+\theta) \big]\,.
  \end{align*}
Observe that:
 \begin{align*}
  r_{0,1} (\theta,\theta) & = \frac12[\mathbf D'_{ m}(2\theta)-\mathbf D'_{ d}(2\theta)]=-\sum_{k= d+1}^{ m}k\sin(2k\theta)\,,\\
    r_{1,1} (\theta,\theta) & =\sum_{k= d+1}^{ m}k^{2}(1-\cos(2k\theta))\,.
   \end{align*}
 On the other hand, if $Z \sim \mathcal N(\mu, \sigma^2)$ then
\[
 \E(Z^+ ) = \mu\, \Psi\big( \frac\mu\sigma\big) + \sigma\,  \psi\big( \frac\mu\sigma\big) \leq  \mu^+ + \dfrac\sigma{\sqrt{2 \pi}}\,,
\]
where $\psi$ is the standard normal density. We get  that:
 \begin{align*}
 &\int_{0}^\pi \E\big( (T'(\theta))^+ \big| T(\theta) =u) \psi_{\sigma_{ m, d}(\theta)} (u)\d t\\
 &\leq\int_{0}^\pi\frac{ [\mathbf D'_{ m}(2\theta)-\mathbf D'_{ d}(2\theta)]^{+}}{2\,\sigma^{2}_{ m, d}(\theta)}\, u \psi_{\sigma_{ m, d}(\theta)}(u) \d\theta\\&  + \frac{1}{\sqrt{2\pi}} \int_{0}^\pi\big[\sum_{k= d+1}^{ m}k^{2}(1-\cos(2k\theta))\big]^{1/2} \,\psi_{\sigma_{ m, d}(\theta)}(u) \d\theta\,, \\
  &= A+B\,.
\end{align*}
\ni
We use the following straightforward relations: 
\begin{itemize}
  \item $\forall\,0<\sigma_1 <\sigma_2 <u\,,\quad\psi_{\sigma_1}(u) \leq \psi_{\sigma_2}(u)$, 
  \item $\forall\theta\,,\quad[\mathbf D'_{ m}(2\theta)-\mathbf D'_{ d}(2\theta)]^{+}\leq \sum_{k= d+1}^{ m}k=\frac{( m+ d+1)( m- d)}2$,
    \item $\forall\theta \in [0,\pi]$,
\[
  \frac{ u}{2\,\sigma^{2}_{ m, d}(\theta)}\,  \psi_{\sigma_{ m, d}(\theta)}(u)\leq  \frac{1}{2\sqrt{2\pi}u^2} \,\frac{ u^3}{\,\sigma^3_{ m, d}(\theta)}  e^{- \frac{ u^2}{4\,\sigma^2_{ m, d}(\theta)} } \,e^{- \frac{ u^2}{4\,\sigma^2_{ m, d}(\theta)} }\leq  \frac{2}{3u^2}e^{-\frac{u^2}{8 ( m- d)}}\,.
\]
\end{itemize}
\ni
Eventually, we get, for $u> \sqrt{2( m- d)}$:
\begin{align*}
A &\leq  \frac{\pi}3\, \frac{( m+ d+1)( m- d)}{u^{2}}\exp(-\frac{u^{2}}{8( m- d)})\,,\\
B&  \leq \Big[\frac\pi{12}((2 m+1)( m+1) m-(2 d+1)( d+1) d)\Big]^{1/2}\,\psi_{\sqrt{2( m- d)}} (u)\,.
\end {align*}
and the result follows.
\end{proof}
\ni
As a corollary, we deduce Lemma \ref{lem:Rice}.

\section{Fenchel dual and first order conditions}
\label{app:Fenchel}
\begin{lem}\label{lem:ConvexConjugate}
The program:
\eq
\label{Prog:Primal}
 \inf_{ \mu\in\mbf C_{ d}(\mbf x)}\frac12\lVert \mbf c(\mu)-\mbf y\lVert^2_2+\lambda\lVert\mu\lVert_{TV}\,,
\qe
has Fenchel dual program:
\eq\label{Prog:Dual}
-\inf_{\displaystyle\lVert\sum_{k=0}^{ m}\alpha_{k}\varphi_{k}\lVert_{\infty}\leq\lambda}\Big\{\langle\alpha,y\rangle+\frac12\sum_{k= d+1}^{ m}\alpha_{k}^{2}\Big\}\,.
\qe
Moreover, there is no duality gap.
\end{lem}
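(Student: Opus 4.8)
The plan is to obtain \eqref{Prog:Dual} by a Fenchel--Rockafellar computation, exploiting that the data-fitting term factors through the \emph{finite}-dimensional moment map $\mathbf c:\mathcal M\to\R^{ m+1}$, $\mu\mapsto(c_k(\mu))_{k=0}^{ m}$, whose predual adjoint is the explicit synthesis operator $\alpha\mapsto\sum_{k=0}^{ m}\alpha_k\varphi_k$. The identity I rely on throughout is $\langle\mathbf c(\mu),\alpha\rangle=\int_{-1}^1\big(\sum_{k=0}^m\alpha_k\varphi_k\big)\,\d\mu$ for all $\mu\in\mathcal M$ and $\alpha\in\R^{ m+1}$, already used in the proof of Theorem \ref{thm:Main}. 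First I fold the feasibility constraint into the data term: since $y_k=c_k(\mbf x)$ for $0\le k\le d$ (because $\mbf e=(0,\mbf n)$), a measure $\mu$ lies in $\mbf C_{ d}(\mbf x)$ exactly when $\mathbf c(\mu)$ lies in the affine set $\{z\in\R^{ m+1}:z_k=y_k,\ k\le d\}$. Hence the primal objective is $G(\mathbf c(\mu))+\lambda\normTV{\mu}$, where $G(z)=\tfrac12\lVert z-\mbf y\rVert_2^2$ if $z_k=y_k$ for $k\le d$ and $G(z)=+\infty$ otherwise.

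Next I compute the two conjugates. Using the biconjugation $G(\mathbf c(\mu))=\sup_{\alpha}\{\langle\alpha,\mathbf c(\mu)\rangle-G^{\star}(\alpha)\}$ (valid since $G$ is closed convex proper), the value of \eqref{Prog:Primal} equals the saddle value $\inf_{\mu}\sup_{\alpha}\big[\lambda\normTV{\mu}+\langle\mu,\sum_k\alpha_k\varphi_k\rangle-G^{\star}(\alpha)\big]$. A coordinatewise optimization gives $G^{\star}(\alpha)=\langle\alpha,\mbf y\rangle+\tfrac12\sum_{k= d+1}^{ m}\alpha_k^2$: the coordinates $k\le d$ contribute only the linear term $\alpha_k y_k$, while each coordinate $k> d$ is optimized at $z_k=y_k+\alpha_k$, producing $\alpha_ky_k+\tfrac12\alpha_k^2$. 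For the inner infimum over $\mu$, the homogeneity of the total-variation norm together with the duality $\langle\mu,\psi\rangle\ge-\normTV{\mu}\,\lVert\psi\rVert_{\infty}$ (coming from $\mathcal M=\mathscr C([-1,1])^{\star}$) yield $\inf_{\mu}\{\lambda\normTV{\mu}+\langle\mu,\psi\rangle\}=0$ if $\lVert\psi\rVert_{\infty}\le\lambda$ and $-\infty$ otherwise, the value $-\infty$ being reached by blowing up a point mass placed, with the correct sign, at a point where $|\psi|$ attains its maximum. Setting $\psi=\sum_k\alpha_k\varphi_k$ collapses the inner problem onto the constraint $\lVert\sum_k\alpha_k\varphi_k\rVert_{\infty}\le\lambda$ and turns the saddle value into $-\inf_{\lVert\sum_k\alpha_k\varphi_k\rVert_{\infty}\le\lambda}\{\langle\alpha,\mbf y\rangle+\tfrac12\sum_{k= d+1}^{ m}\alpha_k^2\}$, which is precisely \eqref{Prog:Dual}.

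The only genuinely delicate point is justifying the interchange of $\inf$ and $\sup$, i.e. the absence of a duality gap; weak duality is immediate from the minimax inequality. I would close the gap by invoking the strong Fenchel duality theorem, checking its qualification: $\normTV{\cdot}$ is finite on all of $\mathcal M$, and the moment map $\mathbf c$ is onto $\R^{ m+1}$ (the $\varphi_k$ being linearly independent, any prescribed generalized-moment vector is realized by some finite signed measure), whence $\operatorname{dom}G-\mathbf c(\mathcal M)=\R^{ m+1}$ and $0$ lies in its interior. The point requiring care is to set up the pairing as $(\mathscr C([-1,1]),\mathcal M)$ and to use that $\mathbf c$ is weak-$\star$ continuous, so that the dual variable stays in the finite-dimensional space $\R^{ m+1}$, the adjoint is exactly the synthesis map above, and the dual supremum is attained. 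This finite-dimensional image is what makes the qualification trivial to verify and rules out a gap, completing the proof.
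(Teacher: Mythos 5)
Your proof is correct and reaches \eqref{Prog:Dual} by a genuinely different decomposition from the paper's. The paper keeps the feasibility constraint as an indicator $\psi_{2}=\imath\!\imath_{\mbf C_{d}(\mbf x)}$ on the measure space, computes its conjugate on $\mathscr C([-1,1])$ --- which requires the completeness of $\{\varphi_{k}\}$ in $L^{2}(\measure)$ to show that $\psi_{2}^{\star}(f)=\infty$ unless $f\in\mathrm{Span}(\varphi_{0},\ldots,\varphi_{d})$ --- and then forms the infimal convolution of $\psi_{1}^{\star}$ and $\psi_{2}^{\star}$, introducing auxiliary coefficients $a_{0},\ldots,a_{d}$ that must afterwards be eliminated against $\frac12\lVert\alpha\rVert_{2}^{2}$ to produce the quadratic term $\frac12\sum_{k=d+1}^{m}\alpha_{k}^{2}$. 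You instead notice that the constraint factors through the finite-dimensional moment map (because $y_{k}=c_{k}(\mbf x)$ for $k\leq d$) and absorb it into the data term $G$; the conjugate $G^{\star}$ is then a coordinatewise computation that yields $\langle\alpha,\mbf y\rangle+\frac12\sum_{k>d}\alpha_{k}^{2}$ directly, with no auxiliary variables, the only infinite-dimensional conjugate being the standard $(\lambda\normTV{\cdot})^{\star}=\imath\!\imath_{\mathrm B_{\infty}(\lambda)}$, which you and the paper compute identically. What each approach buys: yours is shorter, avoids the density argument behind $\psi_{2}^{\star}$, treats $d=-1$ and $d\geq 0$ uniformly, and states the constraint qualification more carefully than the paper's bare appeal to Slater's condition --- your condition that $0$ lies in the interior of $\operatorname{dom}G-\mbf c(\mathcal M)=\R^{m+1}$, valid because $\mbf c$ is onto and $\normTV{\cdot}$ has full domain, is the right one here, since $\operatorname{dom}G$ has empty interior when $d\geq 0$ and $G$ is therefore nowhere continuous. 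The paper's longer route has the side benefit of producing the explicit formula \eqref{eq:PsiStarValue} for $\psi^{\star}$, which it reuses immediately afterwards to read off the first-order optimality conditions; for the present lemma, however, your argument is complete.
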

\begin{proof} The case $ d=-1$ has been treated in \cite{ azais2014spike}. Assume that $ d\geq0$. Program \eqref{Prog:Primal} can be viewed as:
\eq\notag
\inf_{\mu\in \mathcal M}\quad h(\mbf c(\mu))+\psi_{1}(\mu)+\psi_{2}(\mu)\,,
\qe
where $h(c):=(1/2)\lVert c-\mbf y\lVert_{2}^{2}$, $\psi_{1}(\mu):=\lambda\lVert\mu\lVert_{TV}$ and $\psi_{2}(\mu):=\imath\!\imath_{\mbf C_{ d}(\mbf x)}(\mu)$, with:
\[
\imath\!\imath_{\mbf C_{ d}(\mbf x)}(\mu)=
\Bigg\{
\begin{array}{ll}
0&\mathrm{if\ }\mu\in\mbf C_{ d}(\mbf x)\,, \\
\infty&\mathrm{otherwise}\,. 
\end{array}
\]
Note the function $h$ has Legendre conjugate:
\[
\forall\alpha\in\mathds R^{ m+1},\quad h^{\star}(\alpha)=\langle\alpha,\mbf y\rangle+\frac12\lVert\alpha\lVert_{2}^{2}\,,
\]
One can check that the function $\psi_{1}$ has Legendre conjugate:
\[
\forall f\in\mathcal C([-1,1]),\quad\psi_{1}^{\star}(f)=\sup_{\mu\in \mathcal M}\int f\d\mu-\lambda\lVert\mu\lVert_{TV}=\imath\!\imath_{\mathrm B_{\infty}(\lambda)}(f)\,,
\]
where:
\[
\imath\!\imath_{\mathrm B_{\infty}(\lambda)}(f)=
\Bigg\{
\begin{array}{ll}
0&\mathrm{if\ }\lVert f\lVert_{\infty}\leq\lambda\,, \\
\infty&\mathrm{otherwise}\,. 
\end{array}
\]
Indeed, we have, for all $\mu\in \mathcal M$, $\int f\d\mu-\lambda\lVert\mu\lVert_{TV} \leq (\lVert f\lVert_{\infty} - \lambda ) \lVert\mu\lVert_{TV}$, showing that the supremum over $\mu$ is $0$ if $\lVert f\lVert_{\infty} \leq \lambda$. If $\lVert f\lVert_{\infty} > \lambda$, define, for every $A>0$, $\mu_A = A \; \text {sg}( f(x_0) ) \;\delta_{x_0}$ where $x_0$ is such that $\lVert f\lVert_{\infty} = |f(x_0)|$. Then $\int f\d\mu_A-\lambda\lVert\mu_A \lVert_{TV}= A (\lVert f\lVert_{\infty} - \lambda)$ for every $A>0$, which completes proving our claim.

Let us turn to the Legendre conjugate of $\psi_2$. We show that
\[
\forall f\in\mathcal C([-1,1]),\quad
\psi_{2}^{\star}(f)=\sup_{\mu\in\mbf C_{ d}(\mbf x)}\int f\d\mu=
\left\{
\begin{array}{ll}
\displaystyle\sum_{k=0}^{d}a_{k}y_{k}&\mathrm{if\ }f=\displaystyle\sum_{k=0}^{d}a_{k}\varphi_{k}\,, \\
\infty&\mathrm{otherwise}\,. 
\end{array}
\right.
\]
Indeed, the result is obvious if $f$ is of the form $f=\displaystyle\sum_{k=0}^{d}a_{k}\varphi_{k}$. In the other case, recall that $\{\varphi_k\}_{k \geq 0}$ is a complete orthonormal family of $L^2([-1,1],\measure)$ where $\measure(\d t)=(1/\pi)\,({1-t^{2}})^{-1/2}\,\d t$ ($\d t$ denotes the Lebesgue measure). Thus, in this Hilbert space, $f$ can be expanded as $f = \sum_{k=0}^\infty a_k \varphi_k$ with $a_p \neq 0$ for som $p >d$. Define the measure $\mu_1 (dt) = \varphi_p(t) \measure (dt)$. Observe that $\int \varphi_k d\mu_1 = \delta_{kp}$ and $\int f d\mu_1 = a_p$. Let $\mu_0 \in \mbf C_{ d}(\mbf x)$, and $\mu_A = \mu_0 + A \, \mu_1$ for every $A \in \mathbb R$. Then $\mu_A \in \mbf C_{ d}(\mbf x)$ and $\int f d\mu_A = \sum_{k=0}^d a_k y_k + A\, a_p, \, \forall A \in \mathbb R$. This proves our claim.

Let $f\in\mathscr C([-1,1])$. The Legendre conjugate $\psi^{\star}$ of $\psi:=\psi_{1}+\psi_{2}$ at $f$ is given by:
\eq\label{eq:PsiStarValue}
\psi^{\star}(f)=\inf_{f=f_{1}+f_{2}}\psi_{1}^{\star}(f_{1})+\psi_{2}^{\star}(f_{2})=\inf_{a_{0},\ldots,a_{d}\in\R}\imath\!\imath_{\mathrm B_{\infty}(\lambda)}(f-\sum_{k=0}^{d}a_{k}\varphi_{k})+\sum_{k=0}^{d}a_{k}y_{k}\,.
\qe
Indeed, observe that the bi-conjugate of $\psi_{1}$ (resp. $\psi_{2}$) enjoys $\psi_{1}^{\star\star}=\psi_{1}$ (resp. $\psi_{2}^{\star\star}=\psi_{2}$) and it holds:
\begin{align*}
\psi^{\star}(f)&=\sup_{\mu\in\mathcal M}\{\int f\d\mu-\psi_{1}(\mu)-\psi_{2}(\mu)\}\,,\\
&=\sup_{\mu\in\mathcal M}\{\int f\d\mu-\sup_{f_{1}}\{\int f_{1}\d\mu-\psi^{\star}_{1}(f_{1})\}-\sup_{f_{2}}\{\int f_{2}\d\mu-\psi^{\star}_{2}(f_{2})\}\}\,,\\
&=\inf_{f_{1},f_{2}}\sup_{\mu\in\mathcal M}\{\int (f-f_{1}-f_{2})\d\mu+\psi^{\star}_{1}(f_{1})+\psi^{\star}_{2}(f_{2})\}\,,\\
&=\inf_{f=f_{1}+f_{2}}\psi^{\star}_{1}(f_{1})+\psi^{\star}_{2}(f_{2})\,.
\end{align*}

\noindent
Moreover, observe that the dual operator $\mbf c^{\star}$ of $\mbf c$ is given by:
\[
\forall\alpha\in\mathds R^{ m+1}\,,\quad\mbf c^{\star}(\alpha)=\sum_{k=0}^{ m}\alpha_{k} \varphi_{k}\,.
\]
Observe that the bi-conjugate of $h$ enjoys $h^{\star\star}=h$. Then, notice that:
\begin{align*}
 \inf_{\mu\in \mathcal M}\ h(\mbf c(\mu))+\psi(\mu)
 &=\inf_{\mu\in \mathcal M}\sup_{\alpha\in\R^{m+1}}\{\langle\alpha,\mbf c(\mu)\rangle-h^\star(\alpha)\}+\psi(\mu)\,,\\
 &=\sup_{\alpha\in\R^{m+1}}-h^\star(\alpha)-\sup_{\mu\in \mathcal M}\{\langle-\mbf c^\star(\alpha),\mu\rangle-\psi(\mu)\}\,,\\
 &=-\inf_{\alpha\in\mathds R^{ m+1}}h^{\star}(\alpha)+\psi^{\star}(-\mbf c^{\star}(\alpha))\,.
\end{align*}
It follows that the program \eqref{Prog:Primal} has Fenchel dual:
\[
-\inf_{\alpha\in\mathds R^{ m+1}}h^{\star}(\alpha)+\psi^{\star}(-\mbf c^{\star}(\alpha))=-\inf_{\lVert\sum_{k=0}^{m}\alpha_{k}\varphi_{k}\lVert_{\infty}\leq\lambda}\Big\{\langle\alpha,\mbf y\rangle+\frac12\sum_{k= d+1}^{ m}\alpha_{k}^{2}\Big\}\,.
\]
Slater's condition shows that strong duality holds.
\end{proof}

\begin{lem} The first order conditions read: There exists $\hat a_{0},\ldots,\hat a_{d}\in\R$ such that
\eq
\label{eq:OptimalityPremierOrdre}
\lVert\hat P\lVert_{\infty}\leq\lambda\quad \mathrm{and}\quad \lambda\lVert\hat{\mbf x}\lVert_{TV}=\int_{-1}^{1}\hat P\mathrm d(\hat{\mbf x})\,,
\qe
where:
\[
\hat P=\sum_{k=0}^{d}\hat a_{k}\varphi_{k}+\sum_{k=d+1}^{ m}(y_{k}-c_{k}(\hat{\mbf x}))\varphi_{k}\,.
\]
\end{lem}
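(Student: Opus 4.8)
The plan is to read these conditions off from the no-gap Fenchel duality established in Lemma \ref{lem:ConvexConjugate}. Writing the primal objective of \eqref{Prog:Primal} as $F(\mu)=h(\mbf c(\mu))+\psi_{1}(\mu)+\psi_{2}(\mu)$ with $h(c)=\frac12\lVert c-\mbf y\lVert_{2}^{2}$, $\psi_{1}=\lambda\lVert\,\cdot\,\lVert_{TV}$ and $\psi_{2}=\imath\!\imath_{\mbf C_{d}(\mbf x)}$ as in that lemma, the minimiser $\hat{\mbf x}$ is characterised by the stationarity relation $0\in\partial F(\hat{\mbf x})$. First I would justify the chain and sum rules for the subdifferential of $F$ in the dual pairing of $\mathcal M$ with $\mathcal C([-1,1])$, which turns this into $0\in\mbf c^{\star}\nabla h(\mbf c(\hat{\mbf x}))+\partial\psi_{1}(\hat{\mbf x})+\partial\psi_{2}(\hat{\mbf x})$. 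Since $h$ is smooth with $\nabla h(\mbf c(\hat{\mbf x}))=\mbf c(\hat{\mbf x})-\mbf y$ and $\mbf c^{\star}(\alpha)=\sum_{k=0}^{m}\alpha_{k}\varphi_{k}$, this reads
\[
\sum_{k=0}^{m}(y_{k}-c_{k}(\hat{\mbf x}))\varphi_{k}\in\partial\psi_{1}(\hat{\mbf x})+\partial\psi_{2}(\hat{\mbf x})\,.
\]

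Next I would compute the two subdifferentials. The set $\mbf C_{d}(\mbf x)$ is the finite-codimension affine subspace cut out by the functionals $\mu\mapsto c_{k}(\mu)$ for $k=0,\ldots,d$, so its normal cone --- and hence $\partial\psi_{2}(\hat{\mbf x})$ --- is exactly $\mathrm{Span}\{\varphi_{0},\ldots,\varphi_{d}\}$. The subdifferential of the total-variation norm is the classical one: $g\in\partial\psi_{1}(\hat{\mbf x})$ if and only if $g\in\mathcal C([-1,1])$ satisfies $\lVert g\lVert_{\infty}\leq\lambda$ and $\int_{-1}^{1}g\,\mathrm d\hat{\mbf x}=\lambda\lVert\hat{\mbf x}\lVert_{TV}$. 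Combining, there exist reals $a_{0},\ldots,a_{d}$ and a function $g\in\partial\psi_{1}(\hat{\mbf x})$ with
\[
\sum_{k=0}^{m}(y_{k}-c_{k}(\hat{\mbf x}))\varphi_{k}=g+\sum_{k=0}^{d}a_{k}\varphi_{k}\,.
\]
Because feasibility $\hat{\mbf x}\in\mbf C_{d}(\mbf x)$ forces $c_{k}(\hat{\mbf x})=c_{k}(\mbf x)=y_{k}$ for $0\leq k\leq d$ (the noiseless moments), the low-order part of the left-hand side vanishes, and solving for $g$ gives $g=\sum_{k=0}^{d}(-a_{k})\varphi_{k}+\sum_{k=d+1}^{m}(y_{k}-c_{k}(\hat{\mbf x}))\varphi_{k}$. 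Setting $\hat a_{k}:=-a_{k}$ identifies $g$ with the polynomial $\hat P$ of the statement, and the membership $g\in\partial\psi_{1}(\hat{\mbf x})$ then yields precisely $\lVert\hat P\lVert_{\infty}\leq\lambda$ together with $\lambda\lVert\hat{\mbf x}\lVert_{TV}=\int_{-1}^{1}\hat P\,\mathrm d\hat{\mbf x}$.

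The hard part will be the infinite-dimensional subdifferential calculus: the additivity $\partial(\psi_{1}+\psi_{2})=\partial\psi_{1}+\partial\psi_{2}$ and the chain rule require a constraint qualification (a relative-interior / Slater condition), and the computation of $\partial\psi_{1}$ must be performed in the weak-$\star$ pairing of $\mathcal M$ with $\mathcal C([-1,1])$ rather than in a reflexive space. Both ingredients are already available here: the Slater condition is exactly the one invoked at the end of Lemma \ref{lem:ConvexConjugate} to exclude a duality gap, and the total-variation subdifferential is standard. Alternatively, one can avoid the calculus altogether by working from strong duality directly: take $\hat\alpha$ optimal for the dual program \eqref{Prog:Dual}, observe that stationarity of its quadratic part forces $\hat\alpha_{k}=c_{k}(\hat{\mbf x})-y_{k}$ for $k>d$ while the $\hat\alpha_{k}$ with $k\leq d$ remain free as multipliers of the equality constraints, and then read the two conditions off from dual feasibility and complementary slackness.
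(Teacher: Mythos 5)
Your proposal is correct in substance and lands on the same conditions, but it packages the argument differently from the paper. The paper never invokes the abstract sum rule $\partial(\psi_{1}+\psi_{2})=\partial\psi_{1}+\partial\psi_{2}$: it derives, via the elementary perturbation $\nu=\hat{\mbf x}+\gamma(\mu-\hat{\mbf x})$ and $\gamma\to 0$, the variational inequality $\lambda(\lVert\mu\lVert_{TV}-\lVert\hat{\mbf x}\lVert_{TV})\geq\langle \mbf y-\mbf c(\hat{\mbf x}),\mbf c(\mu)-\mbf c(\hat{\mbf x})\rangle$ for all feasible $\mu$ (shown to be necessary \emph{and} sufficient), rewrites it as the Fenchel--Young equality $\psi(\hat{\mbf x})+\psi^{\star}(\hat Q)=\int\hat Q\,\d\hat{\mbf x}$ with $\hat Q=\sum_{k=d+1}^{m}(y_{k}-c_{k}(\hat{\mbf x}))\varphi_{k}$, and then reads the multipliers $\hat a_{0},\ldots,\hat a_{d}$ off the explicit formula \eqref{eq:PsiStarValue} for $\psi^{\star}$, whose infimum runs over the finite-dimensional parameter $(a_{0},\ldots,a_{d})$ and is attained. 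That device is exactly what discharges the step you rightly flag as the hard one: in the pairing of $\mathcal M$ with $\mathcal C([-1,1])$ the TV norm is not weak-$\star$ continuous and $\mbf C_{d}(\mbf x)$ has empty interior, so Moreau--Rockafellar does not apply off the shelf, and your appeal to ``the Slater condition of Lemma \ref{lem:ConvexConjugate}'' is not quite the right citation --- there Slater is used for strong duality between \eqref{Prog:Primal} and \eqref{Prog:Dual}, whereas the additivity you need is the exactness of the inf-convolution in \eqref{eq:PsiStarValue}, which holds because $\psi_{2}$ is the indicator of an affine subspace of finite codimension cut out by weak-$\star$ continuous functionals. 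Your normal-cone computation $\partial\psi_{2}(\hat{\mbf x})=\mathrm{Span}\{\varphi_{0},\ldots,\varphi_{d}\}$, your description of the TV subdifferential, and the observation that feasibility annihilates the low-order coefficients are all correct; and the alternative you sketch via the dual program and complementary slackness is sound and is in fact the closest in spirit to what the paper actually does.
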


\begin{proof}
Let $\mu\in\mbf C_{ d}(\mbf x)$ and $\gamma\in(0,1)$. Set $\nu=\hat{\mbf x}+\gamma(\mu-\hat{\mbf x})$ then, by convexity:
\[
\lVert\mu\lVert_{TV}-\lVert\hat{\mbf x}\lVert_{TV}\geq\frac1\gamma(\lVert\nu\lVert_{TV}-\lVert\hat{\mbf x}\lVert_{TV})\,.
\]
Observe that $\nu\in\mbf C_{ d}(\mbf x)$, by optimality:
\begin{align*}
\lambda(\lVert\nu\lVert_{TV}-\lVert\hat{\mbf x}\lVert_{TV})&\geq\frac12(\lVert \mbf c(\hat{\mbf x})-\mbf y\lVert^2_2-\lVert \mbf c(\nu)-\mbf y\lVert^2_2)\,,\\
&=\gamma\langle \mbf y-\mbf c(\hat{\mbf x}),\mbf c(\mu)-\mbf c(\hat{\mbf x})\rangle-\frac{\gamma^{2}}2\lVert \mbf c(\mu)-\mbf c(\hat{\mbf x})\lVert^2_2\,.
\end{align*}
Letting $\gamma$ go to $0$, we deduce:
\eq
\label{eq:OptimalityProgram1}
\forall\mu\in\mbf C_{ d}(\mbf x)\,,\quad\lambda(\lVert\mu\lVert_{TV}-\lVert\hat{\mbf x}\lVert_{TV})\geq\langle \mbf y-\mbf c(\hat{\mbf x}),\mbf c(\mu)-\mbf c(\hat{\mbf x})\rangle\,.
\qe
Conversely, if \eqref{eq:OptimalityProgram1} holds then, for all $\mu\in\mbf C_{ d}(\mbf x)$:
\begin{align*}
\frac12\lVert \mbf c(\mu)-\mbf y\lVert^2_2+\lambda\lVert\mu\lVert_{TV}\geq&\frac12\lVert \mbf c(\hat{\mbf x})-\mbf y+\mbf c(\mu)-\mbf c(\hat{\mbf x})\lVert^2_2\\&+\langle \mbf y-\mbf c(\hat{\mbf x}),\mbf c(\mu)-\mbf c(\hat{\mbf x})\rangle+\lambda\lVert\hat{\mbf x}\Vert_{TV}\,,\\
=&\frac12\lVert \mbf c(\hat{\mbf x})-\mbf y\lVert^2_2+\lambda\lVert\hat{\mbf x}\lVert_{TV}+\frac12\lVert \mbf c(\mu)-\mbf c(\hat{\mbf x})\lVert^2_2\,,\\
\geq&\frac12\lVert \mbf c(\hat{\mbf x})-\mbf y\lVert^2_2+\lambda\lVert\hat{\mbf x}\lVert_{TV}\,.
\end{align*}
Therefore, Eq. \eqref{eq:OptimalityProgram1} is a necessary and sufficient condition for the measure $\hat{\mbf x}$ to be a solution to \eqref{def:Blasso}. In particular, it follows:
\[
\lambda\lVert\hat{\mbf x}\lVert_{TV}-\langle \mbf y-\mbf c(\hat{\mbf x}),\mbf c(\hat{\mbf x})\rangle\leq\inf_{\mu\in\mbf C_{ d}(\mbf x)}\{\lambda\lVert\mu\lVert_{TV}-\langle \mbf y-\mbf c(\hat{\mbf x}),\mbf c(\mu)\rangle\}=-\psi^{\star}(\hat Q)\,,
\]
where $\psi^{\star}$ is defined by \eqref{eq:PsiStarValue} and $\displaystyle\hat Q=\sum_{k= d+1}^{ m}(y_{k}-c_{k}(\hat{\mbf x}))\varphi_{k}$. The optimality conditions can be deduced from \eqref{eq:PsiStarValue}.
\end{proof}

\section{Proof of Lemma \ref{lem:MajorationPrediction}}
\label{app:MajorationPrediction}
\ni
Let $(a_k)_{k=0}^{ m}$ be the coefficients of $P$, namely:
\[
P=\sum_{k=0}^{ m}a_{k}\varphi_{k}\,.
\]
Since $\mathscr F$ is an orthonormal family of $L^{2}(\measure)$, it holds
\begin{align*}
\int_{-1}^1P\mathrm{d}(\hat{\mbf x}-\mbf x)&=\sum_{k=0}^{ m}a_{k}\int_{-1}^1\varphi_{k}\mathrm{d}(\hat{\mbf x}-\mbf x)\,,\\
&=\sum_{k=d+1}^{ m}a_{k}(c_{k}(\hat{\mbf x})-c_{k}(\mbf x))\,,\\
&=\int_{-1}^1(\sum_{k=d+1}^{ m}a_{k}\varphi_{k})(\sum_{k=d+1}^{ m}(c_{k}(\hat{\mbf x})-c_{k}(\mbf x))\varphi_{k})\mathrm d \measure\,,\\
&=\int_{-1}^1(\sum_{k=d+1}^{ m}a_{k}\varphi_{k})(-\sum_{k=0}^{d}\hat a_{k}\varphi_{k}+\sum_{k=d+1}^{ m}(c_{k}(\hat{\mbf x})-c_{k}(\mbf x))\varphi_{k})\mathrm d \measure\,,\\
&=\int_{-1}^1(\sum_{k=d+1}^{ m}a_{k}\varphi_{k})(\sum_{k=0}^{ m}\varepsilon_{k}\varphi_{k}-\hat P)\mathrm d \measure\,,
\end{align*}
where $\hat a_{0},\ldots,\hat a_{d}\in\R$ and $\hat P$ are given by Lemma \ref{eq:OptimalityPremierOrdre}. By Cauchy-Schwarz inequality, it yields:
\begin{align*}
\int_{-1}^1P\mathrm{d}(\hat{\mbf x}-\mbf x)&\leq\lVert\sum_{k=d+1}^{ m}a_{k}\varphi_{k}\lVert_{2}\, \lVert \sum_{k=0}^{ m}\varepsilon_{k}\varphi_{k}-\hat P\lVert_{2}\,,\\
&\leq \lVert P\lVert_{2}\, \lVert \sum_{k=0}^{ m}\varepsilon_{k}\varphi_{k}-\hat P\lVert_{\infty}\,,\\
&\leq \lVert P\lVert_{\infty}\, (\lVert \sum_{k=0}^{ m}\varepsilon_{k}\varphi_{k}\lVert_{\infty}+\lVert\hat P\lVert_{\infty})\,,
\end{align*}
where $\lVert \,.\,\lVert_{2}$ stands for the norm associated to the Hilbert space $L^{2}(\measure)$ for which $\mathcal F$ is an orthonormal family. The result follows from \eqref{eq:OptimalityPremierOrdre}.

\section{Background on Semi-Definite Programming in Super-Resolution}
\label{app:Back}
\subsection*{Zero-noise problem}
In the noiseless case, observe that $\mbf n=0$. Exact recovery from moment samples has been investigated in \cite{azais2014spike,bendory2013exact} where one considers the program:
\begin{equation}\label{prog:supportpursuit}
 \mbf x^{0}\in\arg\min\displaylimits_{\mu\in m}\ \lVert{\mu}\lVert_{TV}\quad \text{s.t.} \
\int \Phi\,\d\mu=\int \Phi\,\d\mbf x\,,
\end{equation}
where $\Phi=(\varphi_{0},\ldots,\varphi_{ m})$ is the Chebyshev moment curve. The optimality condition of \eqref{prog:supportpursuit} shows that the sub-gradient of the $\ell_{1}$-norm vanishes at any solution point $\mbf x^{0}$. Therefore a sufficient condition for exact recovery is that $\mbf x$ satisfies the optimality condition. This is covered by the notion of ``dual certificate'' \cite{de2012exact,candes2012towards} or equivalently the notion of ``source condition'' \cite{burger2004convergence}.

\begin{defn}[Dual certificate]
We say that a polynomial $P=\sum_{k=0}^{m}\alpha_k\varphi_{k}$ is a dual certificate for the measure $\mbf x$ defined by \eqref{def:TargetMeasure} if and only if it satisfies the following properties:
 \begin{itemize}
  \item sign interpolation:  $\forall k\in\{1,\ldots,S\}\,,\ P(\mbf t_k)=a_{k}/\lvert a_{k}\lvert$,
  \item $\ell_{\infty}$-constraint: $\lVert P\lVert_{\infty}\leq1$.
 \end{itemize}
\end{defn}
\ni
One can prove \cite{de2012exact} that $\mbf x$ is a solution to \eqref{prog:supportpursuit} {if and only if} $\mbf x$ has a dual certificate. 
\subsection*{Semi-noisy moment sample model} In our model, we deal with an observation $\mbf y$ described by \eqref{eq:defObservation}. In this case, the existence of a dual certificate is not sufficient to derive support localization, see for instance \cite{ azais2014spike}. One needs to strengthen this notion using the Quadratic Isolation Condition \cite{ azais2014spike}.

\begin{defn}[Quadratic isolation condition] 
 \label{def:QIC}
A finite set $\mathbf T=\{ t_1,\ldots, t_s\}\subset[-1,1]$ satisfies the quadratic isolation condition with parameters $C_{a}>0$ and $0<C_{b}<1$, denoted by $\mathrm{QIC}(C_{a}, C_{b})$, if and only if for all $\{\theta_k\}_{k=1}^{s}\in\mathds R^{s}$, there exists $P\in\mathrm{Span}(\mathscr F)$ such that  for all $k=1,\ldots,s$, $P(\mbf t_k)=\exp(-\mathbf{i}\theta_k)$, and
\[\forall x\in[-1,1]\,,\quad 1-\lvert P(x)\lvert\geq \min_{t\in\mathbf T}\ \{C_{a} m^{2}d(x,t)^{2},C_{b}\}\,.\]
\end{defn}
\ni
As showed by Lemma \ref{lem:QIC}, if the support $\mathbf T$ satisfy a minimal separation condition described in \eqref{hyp:separation} then $\mathbf T$ satisfies $\mathrm{QIC}(C_{a}, C_{b})$ with constants $C_{a}= 0.00848$ and $C_{b}= 0.00879$.

\subsection*{Semi-definite programming}
\ni
Observe that the Fenchel dual program of \eqref{def:Blasso} is given by:
\eq\label{def:DualBlasso}
\hat\alpha\in\arg\min_{\displaystyle\lVert\sum_{k=0}^{ m}\alpha_{k}\varphi_{k}\lVert_{\infty}\leq\lambda}\Big\{\langle\alpha,y\rangle+\frac12\sum_{k= d+1}^{ m}\alpha_{k}^{2}\Big\}\,,
\qe
and strong duality holds, see Lemma \ref{lem:ConvexConjugate}. 
Moreover, observe that the constraint  $\lVert\sum_{k=0}^{ m}\alpha_{k}\varphi_{k}\lVert_{\infty}\leq\lambda$ can be re-cast as imposing that the algebraic polynomials:
\eq\label{eq:LInfConstraints}
P_{1}:=\lambda+\sum_{k=0}^{ m}\alpha_{k}\varphi_{k}\geq0\quad \mathrm{and}\quad P_{2}:=\lambda-\sum_{k=0}^{ m}\alpha_{k}\varphi_{k}\geq0\,.
\qe
Considering the change of variables $\theta=\arccos(t)$, the aforementioned inequalities can be equivalently drawn for some trigonometric polynomials. Using Riesz-Fej\'er theorem, one can show that non-negative trigonometric polynomials are sums of squares polynomials (SOS). A standard result, see for instance \cite{dumitrescu2007positive}, ensures that the convex set of sum of square polynomials (SOS) can be described as the intersection between the set of positive hermitian semi-definite (SDP) matrices and an affine constraint.

\begin{lem}
The constraint \eqref{eq:LInfConstraints} can be re-casted into a semi-definite constraint.
\end{lem}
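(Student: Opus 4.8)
The plan is to reduce the two algebraic nonnegativity constraints in \eqref{eq:LInfConstraints} to a single semidefinite feasibility condition by passing to the circle and invoking the Riesz--Fej\'er factorization. First I would perform the substitution $\theta=\arccos t$, so that $\varphi_0=1$ and $\varphi_k(\cos\theta)=\sqrt2\cos(k\theta)$ for $k\geq1$. Writing $P_1(\cos\theta)=p_1(\theta)$ and $P_2(\cos\theta)=p_2(\theta)$, each $p_i$ is a real, even trigonometric polynomial of degree $ m$,
\[
p_1(\theta)=(\lambda+\alpha_0)+\sqrt2\sum_{k=1}^{ m}\alpha_k\cos(k\theta)=\sum_{k=- m}^{ m}c_k\,e^{\mathbf ik\theta}\,,
\]
with $c_0=\lambda+\alpha_0$ and $c_k=c_{-k}=\alpha_{|k|}/\sqrt2$ for $1\leq|k|\leq m$, and symmetrically for $p_2$. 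Since $t\mapsto\arccos t$ is a bijection from $[-1,1]$ onto $[0,\pi]$ and each $p_i$ is even, the requirement $P_i\geq0$ on $[-1,1]$ is equivalent to $p_i(\theta)\geq0$ for every $\theta\in\R$, i.e.\ nonnegativity of $p_i$ on the whole unit circle.

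Next I would invoke the Riesz--Fej\'er theorem: a trigonometric polynomial of degree $ m$ is nonnegative on $\R$ if and only if it is a Hermitian square, $p_i(\theta)=|h_i(\theta)|^2$ for some $h_i(\theta)=\sum_{k=0}^{ m}h_k e^{\mathbf ik\theta}$. The step that produces the conic description is the equivalent reformulation: $p_i\geq0$ if and only if there is a Hermitian positive semidefinite matrix $Q_i\in\mathds C^{( m+1)\times( m+1)}$ whose shifted traces reproduce the Fourier coefficients,
\[
\forall\,k=0,\ldots, m\,,\qquad c_k=\sum_{j}(Q_i)_{j,\,j+k}=\operatorname{tr}(\Theta_k Q_i)\,,
\]
where $\Theta_k$ denotes the elementary Toeplitz matrix carrying ones on the $k$-th diagonal. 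This is exactly the Gram parametrization of sums of squares quoted from \cite{dumitrescu2007positive}. Since the $c_k$ are affine in the decision variables $(\alpha_0,\ldots,\alpha_{ m})$, the trace identities $\operatorname{tr}(\Theta_k Q_i)=c_k$ are affine in $(\alpha,Q_i)$, so the factorizability of $p_i$ is an affine slice of the cone of positive semidefinite Hermitian matrices.

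Putting the two constraints together, \eqref{eq:LInfConstraints} holds if and only if there exist Hermitian $Q_1,Q_2\succeq0$ satisfying the linear trace identities above with $c_k$ given by the coefficients of $p_1$ and $p_2$ respectively; this is a semidefinite constraint in the variables $(\alpha,Q_1,Q_2)$, as claimed. The only point requiring care, rather than a genuine obstacle, is the bookkeeping of the change of variables: one must check that evenness of the cosine expansion makes nonnegativity on the arc $[0,\pi]$ equivalent to nonnegativity on the full circle, so that Riesz--Fej\'er applies globally without a separate treatment of the endpoints $t=\pm1$. Once this is verified, the remaining equivalences are verbatim the statement of the Riesz--Fej\'er theorem and the Gram-matrix description of \cite{dumitrescu2007positive}.
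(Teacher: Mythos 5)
Your argument is correct and follows exactly the route the paper itself sketches (in the paragraph preceding the lemma): the substitution $\theta=\arccos t$, reduction to nonnegativity of an even trigonometric polynomial on the whole circle, Riesz--Fej\'er factorization, and the Gram-matrix/trace parametrization from \cite{dumitrescu2007positive}. You in fact supply more detail than the paper does, including the explicit Fourier coefficients and the affinity check in $(\alpha,Q_1,Q_2)$, and the bookkeeping is right.
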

\ni
Hence, we can compute $\hat\alpha$ using a SDP program. Moreover, Fenchel's duality theorem shows that the dual polynomial:
\[
\hat P=\frac1\lambda\sum_{k=0}^{ m}\hat\alpha_{k}\varphi_{k}\,,
\]
is a sub-gradient of the $TV$-norm at point $\hat{\mbf x}$. In particular, the support $\hat{\mbf T}$ of $\hat{\mbf x}$ is included in:
\[
\big\{t\in[-1,1]\,,\quad\lvert \hat P\lvert=1\big\}\,.
\]
If $\hat P$ is not constant, this level set has at most $ m+1$ points and it defines the support of the solution. Hence, we can find the weights of $\hat{\mbf x}$ using a least-square-type estimator subject to the affine constraint given by the intersection between $\mbf C_{ d}(\mbf x)$ and discrete measures with support included in $\hat{\mbf T}$. In this case, the solution to \eqref{def:Blasso} is unique and can be computed using the aforementioned SDP program. If $\hat P$ is constant then there always exists a solution to \eqref{def:Blasso} with finite support. Indeed, using the fact that there is no duality gap, one can check that the solution has non-negative (resp. non-positive) weights if $\hat P=1$ (resp. $\hat P=-1$). Therefore, Carath\'eodory's theorem shows that there always exists a solution with finite support\footnote{The interested reader may find a valuable reference on the geometry of the cone of non-negative measures in \cite{krein1977markov}.}. However, one can not use the dual program \eqref{def:DualBlasso} to compute the solution to the primal program \eqref{def:Blasso}. We deduce the following lemma.
\begin{lem}
\label{lem:SolDiscrete}
There always exists a solution to the primal problem \eqref{def:Blasso} with a support of size at most $ m+2$. Moreover, if $\hat P$ is not constant, the solution to \eqref{def:Blasso} is unique, its support is included in the level set $\{t\in[-1,1]\,,\ \lvert \hat P\lvert=1\}$ and has size at most $ m+1$.
\end{lem}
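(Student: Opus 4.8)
The plan is to read off everything from the dual polynomial $\hat P=\frac1\lambda\sum_{k=0}^{m}\hat\alpha_k\varphi_k$ furnished by the strong duality of Lemma~\ref{lem:ConvexConjugate}. As recalled above, Fenchel duality makes $\hat P$ a subgradient of the $TV$-norm at \emph{every} solution $\hat{\mbf x}$ of \eqref{def:Blasso}; concretely \eqref{eq:OptimalityPremierOrdre} gives $\|\hat P\|_\infty\le1$ and $\int \hat P\,\mathrm d\hat{\mbf x}=\|\hat{\mbf x}\|_{TV}$, which forces $|\hat P|=1$ on $\mathrm{Supp}(\hat{\mbf x})$. Hence every optimal measure is supported in the fixed level set $L:=\{t\in[-1,1]\,;\ |\hat P(t)|=1\}$, and the whole statement reduces to controlling $L$.

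First I would handle the generic case where $\hat P$ is not constant. Passing to $\theta=\arccos t$, write $g(\theta):=\hat P(\cos\theta)=\sum_{k=0}^{m}a_k\cos(k\theta)$, a cosine polynomial of degree $m$ with $\|g\|_\infty\le1$. Each \emph{interior} point of $L$ is a point where $|g|$ attains its maximal value $1$, hence a critical point of $g$. Since $g'(\theta)=-\sum_{k=1}^{m}k\,a_k\sin(k\theta)$ is a non-zero trigonometric polynomial of degree $m$ that already vanishes at $\theta=0$ and $\theta=\pi$, it has at most $m-1$ zeros in the open interval $(0,\pi)$; adding the two endpoints $t=\pm1$ yields at most $m+1$ points in $L$. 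In particular $\mathrm{Supp}(\hat{\mbf x})\subset L$ is finite with at most $m+1$ atoms.

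Uniqueness then follows by reducing \eqref{def:Blasso} to a finite-dimensional problem in the weights carried by the fixed nodes $L=\{\tau_1,\dots,\tau_p\}$, $p\le m+1$. The moment vectors $\Phi(\tau_1),\dots,\Phi(\tau_p)$ form a Chebyshev--Vandermonde system at distinct nodes, hence are linearly independent, so the linear map $w\mapsto\mbf c(\sum_j w_j\delta_{\tau_j})$ is injective and $\frac12\|\mbf c(\cdot)-\mbf y\|_2^2$ is \emph{strictly} convex in $w$; adding the convex $TV$-term keeps strict convexity. A strictly convex objective on the affine set $\mbf C_{d}(\mbf x)$ restricted to measures on $L$ has at most one minimizer, and since every solution of \eqref{def:Blasso} lives on $L$, the solution is unique with support of size at most $m+1$.

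It remains to treat the degenerate case $\hat P\equiv\pm1$. Complementary slackness then forces every solution to have same-signed weights, say $\hat{\mbf x}\ge0$ when $\hat P\equiv1$. For a non-negative measure $\|\mu\|_{TV}=c_0(\mu)$, so the objective \eqref{def:Blasso} depends on $\mu$ only through $\mbf c(\mu)$; consequently any $\mu\ge0$ with $\mbf c(\mu)=\mbf c(\hat{\mbf x})$ is again optimal. Expressing the target moment vector as a convex combination of points of the Chebyshev moment curve $\Phi$ and invoking Carath\'eodory's theorem in $\R^{m+1}$ produces an optimal measure supported on at most $m+2$ atoms, which proves the general bound. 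I expect the delicate point to be the sharp count of at most $m+1$ points for $L$, rather than $m+2$: it relies on the fact that $\theta=0,\pi$ are automatically critical points of the even polynomial $g$, so the endpoints do not inflate the interior zero count of $g'$, together with the non-degeneracy $g'\not\equiv0$ guaranteed precisely by $\hat P$ being non-constant.
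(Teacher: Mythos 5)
Your argument is correct and follows essentially the same route as the paper: every solution is certified by the dual polynomial $\hat P$, hence supported in the level set $\{\lvert \hat P\rvert=1\}$, which you bound by $m+1$ points when $\hat P$ is non-constant (via the zeros of $g'$ after the substitution $\theta=\arccos t$), with uniqueness from strict convexity on the fixed nodes, and Carath\'eodory handling the constant case with the $m+2$ bound. In fact you supply details the paper only asserts, notably the explicit count of the level set and the Chebyshev--Vandermonde injectivity behind uniqueness.
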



\bibliographystyle{elsarticle-harv}
 \bibliography{biblio}
\end{document}